\newtheorem{theorem}{Theorem}[section]
\newtheorem{prop}{Proposition}[section]
\newtheorem{lemma}{Lemma}[section]
\newtheorem{Cor}{Corollary}[section]
\newtheorem{nb}{Remark}[section]
\newtheorem{Def}{Definition}[section]
\numberwithin{equation}{section}
\begin{document}
\title{On a Runge Theorem over $\mathbb{R}_3$}
\date{}
\author{Cinzia Bisi, \, Antonino De Martino, \, J\"org Winkelmann }
\maketitle
\begin{abstract}
\noindent
In this paper we investigate a topological characterization of the Runge theorem in the Clifford algebra $ \mathbb{R}_3$ via the description of the homology groups of axially
symmetric open subsets of the quadratic cone in $\mathbb{R}_3$. 
\end{abstract}
\section{Introduction}
The theory of holomorphic approximation is an important branch of mathematics. It has applications in many fields such as holomorphic dynamics, the theory of minimal surfaces in euclidean spaces, complex analysis and hypercomplex analysis \cite{GSA, GSA1}. The classical theory of holomorphic approximation began in 1885 with the work of Runge and Weierstrass. Later other mathematicians, such as Oka, Weil, Mergelyan, Vituskin, gave important results to this theory \cite{FFF}.
At the end of the 20th century the Runge theorem was studied in the hypercomplex setting. One of the first works in this direction was written by Delanghe and Brackx \cite{DB}. In this paper the authors have proved a Runge type theorem for functions which takes values in a Clifford algebra and are in the kernel of a generalized Cauchy-Riemann operator. Other results in this modern setting were obtained in the paper \cite{CSS}, in which the authors have proved the Runge approximation theorem for slice monogenic functions and slice regular functions. Recently in the paper \cite{BW1} the authors proved a topological characterization of the Runge theorem in the quaternionic setting.
\\ Inspired by this last work, in this paper we prove a Runge theorem in the Clifford algebra $\mathbb{R}_3$. In order to do this we describe the homology of axially symmetric open subsets of the quadratic cone of $\mathbb{R}_3$. Basically, we prove the following theorem
\begin{theorem}
\label{intro}
Let $D \subset D_1$ be symmetric open subsets of $\mathbb{C}$ (w.r.t the real axis) and let $ \Omega_D^4 \subset \Omega_{D_1}^4$ be the corresponding axially symmetric open subsets of the quadratic cone $ \mathcal{Q}_{\mathbb{R}_3}$. Then the following conditions are equivalent
\begin{itemize}
\item[1)] $D \subset D_1$ is a Runge pair. This means that every holomorphic function on D can be approximated by holomorphic functions on $D_1$ (uniformly on compact sets).
\item [2)] $ \Omega_D^4$ is Runge in $\Omega_{D_1}^4$, in the sense that every slice regular function on $ \Omega_D^4$ can be approximated (uniformly on compact sets) by slice regular functions on $ \Omega_{D_1}^4$.
\item [3)] $i_{*}:H_1(D) \to H_1(D_1)$ is injective, where $i_{*}$ is the homology group homomorphism induced by the inclusion map $i:D \to D_1$.
\item [4)] $i_{*}: H_{k}(\Omega_D^4) \to H_{k}(\Omega_{D_1}^4)$ is injective for $k \in \{1,3,5\}$, where $i_{*}$ is the homomorphism induced by the inclusion map $i: \Omega_D^4 \to \Omega_{D_1}^4$.
\item [5)] Every bounded connected component of $ \mathbb{C} \setminus D$ intersects $ \mathbb{C} \setminus D_1$.
\item[6)] Every bounded connected component of  $ \mathcal{Q}_{\mathbb{R}_3}\setminus \Omega_D^4$ intersects $ \mathcal{Q}_{\mathbb{R}_3} \setminus \Omega_{D_1}^4$.
\end{itemize}
\end{theorem}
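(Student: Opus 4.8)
The six conditions split into a classical block ($1$, $3$, $5$, about $D\subset\mathbb{C}$) and a hypercomplex block ($2$, $4$, $6$, about $\Omega_D^4\subset\mathcal{Q}_{\mathbb{R}_3}$). The plan is to establish $1\Leftrightarrow 3\Leftrightarrow 5$ classically and then to build three bridges, $1\Leftrightarrow 2$, $3\Leftrightarrow 4$ and $5\Leftrightarrow 6$; the architecture parallels the quaternionic treatment of \cite{BW1}, the genuinely new ingredient being the homology of the sphere of imaginary units of $\mathbb{R}_3$. For the classical block, $1\Leftrightarrow 5$ is Runge's theorem in its pair form, and $3\Leftrightarrow 5$ is the standard description of the first homology of a planar open set: for open $U\subset\mathbb{C}$ the winding-number pairing identifies $H_1(U)$ with the free abelian group on the bounded connected components of $\mathbb{C}\setminus U$, and under this identification $i_*\colon H_1(D)\to H_1(D_1)$ annihilates precisely the classes of the bounded components of $\mathbb{C}\setminus D$ that happen to lie inside $D_1$. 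The only delicate point is the customary ``outermost component'' device, needed to pass from injectivity on each generator to injectivity on arbitrary integer combinations.

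For the bridge $1\Leftrightarrow 2$ I would invoke the structure theory of slice regular functions on $\mathcal{Q}_{\mathbb{R}_3}$. A slice regular function on $\Omega_D^4$ is the same datum as a holomorphic stem function $D\to\mathbb{R}_3\otimes_{\mathbb{R}}\mathbb{C}$ satisfying the even/odd symmetry imposed by the symmetry of $D$, and the representation formula makes the correspondence between $f$ and its stem function a homeomorphism for the topologies of local uniform convergence. Since $\mathbb{R}_3\otimes_{\mathbb{R}}\mathbb{C}$ is a finite-dimensional complex vector space, approximating a stem function amounts to simultaneously approximating its finitely many scalar holomorphic components, so property $2$ for $\Omega_D^4\subset\Omega_{D_1}^4$ is equivalent to the scalar Runge property for $D\subset D_1$. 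Concretely: a symmetric $g\in\mathcal{O}(D)$ induces a slice regular function on $\Omega_D^4$ whose restriction to a single slice is $g$, so approximating that function by slice regular functions on $\Omega_{D_1}^4$ and restricting back to the slice yields approximations of $g$ in $\mathcal{O}(D_1)$; conversely one reassembles scalar approximations of the components of the stem function of a given $f$.

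The heart of the matter --- and the step I expect to be the main obstacle --- is the bridge $3\Leftrightarrow 4$, which rests on a computation of $H_*(\Omega_D^4)$. One first identifies the sphere of imaginary units $\mathbb{S}_{\mathbb{R}_3}=\{J\in\mathbb{R}_3:J^2=-1\}$: a direct computation (equivalently, via $\mathbb{R}_3\cong\mathbb{H}\oplus\mathbb{H}$) shows $\mathbb{S}_{\mathbb{R}_3}\cong S^2\times S^2$, a $4$-dimensional compact manifold whose homology $H_*(\mathbb{S}_{\mathbb{R}_3})$ is $\mathbb{Z},0,\mathbb{Z}^2,0,\mathbb{Z}$ in degrees $0,\dots,4$; this is exactly why the relevant degrees here are $1,3,5$ --- namely $1$ plus the degrees $0,2,4$ in which $\mathbb{S}_{\mathbb{R}_3}$ has homology --- as opposed to the degrees $1,3$ of the quaternionic case, where the imaginary sphere is $S^2$. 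Writing $D^{+}=D\cap\{\operatorname{Im}\ge 0\}$ and $D^{0}=D\cap\mathbb{R}$, the map $(\alpha+i\beta,J)\mapsto\alpha+\beta J$ realizes $\Omega_D^4$ as the space obtained from $D^{+}\times\mathbb{S}_{\mathbb{R}_3}$ by collapsing the $\mathbb{S}_{\mathbb{R}_3}$-fibre over each real point --- equivalently, as the homotopy pushout of $D^{0}\leftarrow D^{0}\times\mathbb{S}_{\mathbb{R}_3}\hookrightarrow D^{+}\times\mathbb{S}_{\mathbb{R}_3}$, the left arrow being the projection. A Mayer--Vietoris computation then shows that, in the degrees $k\in\{1,3,5\}$, $H_k(\Omega_D^4)$ is assembled out of $H_1(D)$ --- split according to whether the underlying complementary components of $\mathbb{C}\setminus D$ meet $\mathbb{R}$ or not --- tensored with $H_0,H_2,H_4(\mathbb{S}_{\mathbb{R}_3})$; in degrees $3$ and $5$ (and, for the components disjoint from $\mathbb{R}$, already in degree $1$) it records the bounded complementary components of $\mathbb{C}\setminus D$, with $H_5(\Omega_D^4)$ recording each of them, up to conjugation, with multiplicity one. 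The construction being functorial in $D$, the class attached to a bounded component of $\mathbb{C}\setminus D$ dies in $H_k(\Omega_{D_1}^4)$ if and only if that component is contained in $D_1$; hence $i_*$ is injective in degrees $1,3,5$ if and only if no bounded component of $\mathbb{C}\setminus D$ is swallowed into $D_1$, which is condition $5$ and therefore, by the classical block, condition $3$. (Note that $H_1$ alone sees only the components disjoint from $\mathbb{R}$, which is why the higher degrees must be included.) The real work here is the Mayer--Vietoris bookkeeping --- the fibre degenerates over $\mathbb{R}$, so components of $D^{0}$ must be tracked against components of $D^{+}$ --- together with the same ``outermost component'' passage from generators to general classes.

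Finally, for $5\Leftrightarrow 6$ I would use that the axially symmetric completion commutes with complementation inside the cone: from $\mathcal{Q}_{\mathbb{R}_3}=\bigcup_{J\in\mathbb{S}_{\mathbb{R}_3}}\mathbb{C}_J$ and $\Omega_D^4=\bigcup_{J}\phi_J(D)$ one gets $\mathcal{Q}_{\mathbb{R}_3}\setminus\Omega_D^4=\bigcup_{J}\phi_J(\mathbb{C}\setminus D)$, the axially symmetric completion of the closed symmetric set $\mathbb{C}\setminus D$. Since $\mathbb{S}_{\mathbb{R}_3}$ is compact and $\|\alpha+\beta J\|$ is comparable to $|\alpha+i\beta|$ on it, and since a pair of conjugate planar components yields one and the same subset of the cone, passing to axially symmetric completions sets up a bijection between the bounded components of $\mathbb{C}\setminus D$ up to conjugation and the bounded components of $\mathcal{Q}_{\mathbb{R}_3}\setminus\Omega_D^4$, compatibly with passage to the larger set; moreover, by symmetry of $\mathbb{C}\setminus D_1$, a bounded component $\Omega_{[C]}$ of $\mathcal{Q}_{\mathbb{R}_3}\setminus\Omega_D^4$ meets $\mathcal{Q}_{\mathbb{R}_3}\setminus\Omega_{D_1}^4$ exactly when the planar component $C$ meets $\mathbb{C}\setminus D_1$. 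Thus condition $6$ is a verbatim restatement of condition $5$, and the chain of equivalences closes.
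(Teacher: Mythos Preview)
Your proposal is correct and follows essentially the same architecture as the paper: the classical block $1\Leftrightarrow 3\Leftrightarrow 5$, the stem-function bridge $1\Leftrightarrow 2$ (the paper makes the homeomorphism explicit via the two-sided estimate $\tfrac{1}{\sqrt 2}\|F\|\le\max\{|f(\alpha+\beta J)|,|f(\alpha-\beta J)|\}\le\sqrt 2\|F\|$), the component-correspondence $5\Leftrightarrow 6$, and the homological bridge $3\Leftrightarrow 4$ via $\mathbb{S}_{\mathbb{R}_3}\cong S^2\times S^2$ and Mayer--Vietoris.

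The only stylistic divergence worth flagging is in the execution of $3\Leftrightarrow 4$. You phrase it geometrically (``the class attached to a bounded component of $\mathbb{C}\setminus D$ dies in $H_k(\Omega_{D_1}^4)$ iff that component is swallowed by $D_1$'') and route the argument through condition $5$. The paper instead extracts, from the same Mayer--Vietoris cover, the natural short exact sequences
\[
0\to H_1(D^+)\to H_1(D)\to H_5(\Omega_D^4)\to 0,\qquad
0\to H_1(D^+)^{\oplus 2}\to H_1(D)^{\oplus 2}\to H_3(\Omega_D^4)\to 0,
\]
together with $H_1(\Omega_D^4)\cong H_1(D^+)$, and then runs snake-lemma diagram chases directly between $H_1(D)\to H_1(D_1)$ and $H_k(\Omega_D^4)\to H_k(\Omega_{D_1}^4)$; the delicate direction ($3\Rightarrow 4$ for $k=3,5$) is handled by a contradiction argument that traces a putative nonzero kernel element back to $\widehat H_0(D_{\mathbb{R}})$ and builds explicit test sets $P,Q\subset\mathbb{C}$ on which injectivity is checked by hand. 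Your ``outermost component'' device plays the same role as these test sets. Both routes rest on the identical Mayer--Vietoris input, so the difference is packaging rather than substance.
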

One of the main differences with the quaternionic case is that in the point 4) more homological groups are affected.

The plan of the paper is the following: in Section 2 we recall some basic notions about quaternions, the Clifford algebra $ \mathbb{R}_3$ and its quadratic cone. Moreover, in this section we recall the following crucial fact
$$ \mathbb{R}_3 \simeq \mathbb{R}_2 \oplus \mathbb{R}_2,$$
where $ \mathbb{R}_2$ is the algebra of quaternions. It is important to remark that this splitting holds both at the level of real vector space and at the level of algebra.

In Section 3 we state Theorem \ref{intro}. The proof is based on proving the following equivalences

\begin{enumerate}
\item[I)] \qquad \qquad \qquad \qquad \qquad \qquad \qquad  $ 1) \Longleftrightarrow 3) \Longleftrightarrow 5),$
\item[II)] \qquad \qquad \qquad \qquad \qquad \qquad \qquad \qquad $ 1) \Longleftrightarrow 2),$
\item[III)] \qquad \qquad \qquad \qquad \qquad \qquad \qquad \qquad $ 5)\Longleftrightarrow 6),$
\item[IV)] \qquad \qquad \qquad \qquad \qquad \qquad \qquad \qquad $ 3) \Longleftrightarrow 4),$
\end{enumerate}
and the implication
$$ 6) \Longrightarrow 2),$$
which may be of interest in some other contexts. The first ones are related to the complex case. For proving the second one we show the following inequalities
$$ \frac{1}{\sqrt{2}} \| F(\alpha+i \beta) \| \leq \max \{| f(\alpha+\beta J)|, |f(\alpha-\beta J)| \} \leq \sqrt{2} \| F( \alpha+i \beta) \| \qquad  \forall \,  \alpha, \beta \in \mathbb{R}, J \in \mathbb{S}_{\mathbb{R}_{3}},$$
where $f: \Omega_D^4 \subset \mathcal{Q}_{\mathbb{R}_3} \to \mathbb{R}_3$ is a left slice function induced by a stem function $F$.
\\ The proof of the third equivalence is trivial but we show it for the sake of completeness. Due to a slice representation of the quadratic cone $ \mathcal{Q}_{\mathbb{R}_3}$ (see Proposition \ref{one}), it is possible to endow it with the topology induced by a product topology. This is helpful to prove the implication $ 6) \Longrightarrow 2)$. In order to show the last equivalence, firstly, we study the homologies $H_5(\Omega_{D}^4)$, $H_4(\Omega_{D}^4)$, $H_3(\Omega_{D}^4)$, $H_2(\Omega_{D}^4)$, $H_1(\Omega_{D}^4)$, where $\Omega_D^4$ is an axially symmetric open subset of the quadratic cone $ \mathcal{Q}_{\mathbb{R}_3}$, and after we develop a series of technical results which help us to prove the last equivalence.

\section{Preliminaries and notations}

In this section we will overview and collect the main notions and results needed for our aims.
  First, let us recall that the skew field of quaternions
  may be identified with the Clifford algebra $ \mathbb{R}_2$.
An element $q \in \mathbb{R}_2$ is usually written as $q=x_0+ix_1+jx_2 +kx_3$, where $i^2=j^2=k^2=-1$ and $ijk=-1$. Given a quaternion $q$ we introduce a conjugation in $ \mathbb{R}_2$ (the usual one), as $q^c=x_0-ix_1-jx_2 -kx_3$; with this conjugation we define the real part of $q$ as $Re(q):= (q+q^c)/2$ and the imaginary part of $q$ as $Im(q)=(q-q^c)/2$. With the defined conjugation we can write the euclidian square norm of a quaternion $q$ as $|q|^2=qq^c$. The subalgebra of real numbers will be identified, of course, with the set $ \mathbb{R}= \{ q \in \mathbb{R}_2 \, \,| \, \, Im(q)=0 \}$.

Now, if $q$ is such that $Re(q)=0$, then the imaginary part of $q$ is such that $ (Im(q)/|Im(q)|)^2=-1$. More precisely, any imaginary quaternion $I=ix_1+jx_2 +kx_3$, such that $x^2_1+x^2_2+x^2_3=1$ is an imaginary unit. The set of imaginary units is then a real 2-sphere and it will be conveniently denoted as follows
$$ \mathbb{S}_{\mathbb{R}_2}:= \{ q \in \mathbb{R}_2\,\,| \,\, q^2=-1 \}= \{q  \in \mathbb{R}_2\,\,| \,\, Re(q)=0, \, |q|=1 \}.$$
With the previous notation, any $q \in \mathbb{R}_2$ can be written as $q= \alpha+I \beta$, where $ \alpha, \beta \in \mathbb{R}$ and $ I \in \mathbb{S}_{\mathbb{R}_2}$.
\noindent 
Given any $I \in \mathbb{S}_{\mathbb{R}_2}$ we will denote the real subspace of $ \mathbb{R}_2$ generated by 1 and $I$ as
$$ \mathbb{C}_I:= \{ q \in \mathbb{R}_2 \,\, | \,\, q= \alpha+I \beta, \, \, \, \alpha, \beta \in \mathbb{R} \}.$$
Sets of the previous kind will be called \emph{slices}
and they are also complex planes with respect to the complex
  structure defined by the respective parameter $I$.
All these notations reveal now clearly the \emph{slice} structure of $ \mathbb{R}_2$ as a union of complex planes $ \mathbb{C}_I$ for $I$ which varies in $ \mathbb{S}_{\mathbb{R}_2}$, i.e.
$$ \mathbb{R}_2= \bigcup_{I \in \mathbb{S}_{\mathbb{R}_2}}\mathbb{C}_{I}, \quad \bigcap_{I \in \mathbb{S}_{\mathbb{R}_2}} \mathbb{C}_I= \mathbb{R}.$$

The following notion of slice regularity was introduced by Gentili and Struppa \cite{GS,GSS}.

\begin{Def}
Let $ \Omega$ be an open subset of $ \mathbb{R}_2$ with $\Omega \cap \mathbb{R} \neq \emptyset$. A real differentiable function $f: \Omega \mapsto \mathbb{R}_2$ is slice regular if for every $I \in \mathbb{S}_{\mathbb{R}_2}$ its restriction $f_I$ to the complex plane $\mathbb{C}_I$ passing through the origin and containing $1$ and $I$ is holomorphic on $\Omega \cap \mathbb{C}_I$.
\end{Def}

For a ball in $\mathbb{R}_2$ centered at the origin we have that a slice regular function can be represented by a convergent power series
$$ f(q)= \sum_{k=0}^{+ \infty} q^k a_k, \qquad \{a_k\}_{k \in \mathbb{N}} \subset \mathbb{R}_2.$$

The theory of slice regular functions has given already many fruitful results, both on the analytic and the geometric side, see for example \cite{ANB, AB, BAdM, BG1, BG2, BG3, BS1, BS2, BS3, BW, BW2}.

Moreover, slice hyperholomorphic functions have several applications in operator theory and in Mathematical Physics \cite{CG,CGK,GMP}.
The spectral theory of the S-spectrum is a natural tool for the formulation of quaternionic quantum mechanics  and for the study of new classes of fractional diffusion problems, see \cite{CG, CGK, CSS1}, and the references therein. Slice hyperholomorphic functions are also important in operator theory and Schur analysis which have also been deeply investigated in the recent years, see \cite{ACS,ACS1} and the references therein.

Now we will see some basic notions about the real Clifford algebra $ \mathbb{R}_3$ and its quadratic cone $\mathcal{Q}_{\mathbb{R}_3}$, introduced in the papers \cite{GP, GP1}.
\\ We define $\mathbb{R}_3$ as the real associative non-commutative algebra defined as follows. Let $ \{ e_1, e_2, e_3 \}$ be the canonical orthonormal basis for $ \mathbb{R}^3$.
  Then $\mathbb{R}_3$ is the real associative algebra with $1$ generated
  by the $e_i$ 
with defining relations $ e_{i}e_{j}+e_{j}e_{i}=-2 \delta_{ij}$.
  This is the real Clifford algebra for the vector space $\mathbb{R}^3$
  with the standard euclidean  quadratic form.
In the sequel we will write $e_0:=1$, $e_{i} e_{j}= e_{ij}$, for $i,j=1,2,3$, $i \neq j$, and $e_{1}e_{2}e_{3}=e_{123}.$ Thus an arbitrary element $x \in \mathbb{R}_3$ can be written as
\begin{equation}\label{class}
 x=x_0e_0+x_1e_1+x_2e_2+x_3e_3+x_{12}e_{12}+x_{13}e_{13}+x_{23}e_{23}+x_{123}e_{123}
\end{equation}
where the coefficients $x_i$, $x_{ij}$, $x_{ijk}$ are real numbers. Thus, we see that $\mathbb{R}_3$ is an eight dimensional real space, endowed with a natural multiplicative structure.
The conjugate of $x$ will be denoted by $\bar{x}$
and can be defined
as the unique antivolution%
\footnote{An 
  is a linear self-map of order $2$ such that
  $\overline{xy}=(\bar y)\cdot (\bar x)\ \forall x,y \  \in A $, with $A$ any real quadratic alternative algebra with a unity}
 of ${\mathbb R}_3$
with $e_i\mapsto \bar{e_i}=-e_i$.
Conjugation may likewise be defined extending by linearity the anti-involution 
$$\overline{e_0}=e_0, \quad \overline{e_i}=-e_i, \quad \overline{e_{ij}}=-e_{ij}, \quad  \overline{e_{123}}= e_{123},$$ 
for $i,j \in \{1,2,3 \}, \, i \neq j. $

Moreover, it is known that in $ \mathbb{R}_3$ one can consider the two idempotents $ \omega_{+}= \frac{1}{2}(e_0+e_{123})$ and $ \omega_{-}= \frac{1}{2}(e_0-e_{123})$ (i.e. $\omega^2_{+}=\omega_{+}$ , $\omega^2_{-}=\omega_{-}$), that are mutually annihilating each other i.e. $ \omega_{+} \omega_{-}=\omega_{-} \omega_{+}=0$ (see \cite[Chapter 6]{CSS2}, \cite{DS}).
  Let $ \mathbb{R}_3^{+}$ denote the even subalgebra of $ \mathbb{R}_3$
  i.e.
$$ \mathbb{R}_3^{+}= \{ x_{0}e_{0}+x_{23}e_{23}+ x_{12} e_{12}+x_{13}e_{13}: \, x_{0},x_{23},x_{12}, x_{12} \in \mathbb{R} \}.$$
Note that $\mathbb{R}_3^{+}\simeq\mathbb{R}_2$ as $\mathbb{R}$-algebras.

Every $x \in \mathbb{R}_3$ admits a unique representation
\begin{equation}\label{split}
x=\omega_+q+\omega_-p
\end{equation}
with $q,p \in \mathbb{R}_{3}^{+} \simeq \mathbb{R}_2$.
So we have the isomorphism of $\mathbb{R}$-algebras \cite{DSS}
\begin{equation}
\label{ssplit}
\mathbb{R}_3 =\omega_+\mathbb{R}_3^{+}
  \oplus\omega_-\mathbb{R}_3^{+}\simeq \mathbb{R}_2 \oplus \mathbb{R}_2
\end{equation}
where the ring structure is given by $(q,p)+(q',p')=(q+q', p+p')$ and $(q,p)(q', p')=(qq',pp')$.
The equality \eqref{split} is very useful since helps us to work in the Clifford algebra $ \mathbb{R}_3$ using the quaternionic results.

Conjugation on $ \mathbb{R}_3$ is compatible with conjugation
on $\mathbb{R}_2$ via this splitting:
\[
\bar x=\omega_{+} q^c+ \omega_{-} p^c, \quad  \overline{\omega_{+}}=\omega_{+}, \quad  \overline{\omega_{-}}=\omega_{-}
\]
for $x=\omega_+q+\omega_-p$
where $q^c$ and $p^c$ are conjugate of $q$
and $p$ as elements of $\mathbb{R}_2$.

As usual, we have the notions of {\em norm} and {\em trace} associated
to the conjugation, i.e., the norm $n(x)$ is defined as $x\bar x$ and
the trace $t(x)$ is defined as $t(x)=x+\bar x$.

With respect to the splitting
$x=\omega_+q+\omega_-p$ we obtain:
\begin{equation}
\label{split-formula}
t(x)=\omega_+t(q)+\omega_-t(p),\quad
n(x)=\omega_+n(q)+\omega_-n(p)
\end{equation}

The norm is multiplicative, i.e., $n(xy)=n(x)n(y), \, \forall x,y\in
{\mathbb R}_3$.

For more details about this splitting the interested reader can see \cite{DSS, R1,R,SS}.
\begin{nb}
  In general, it is known that every Clifford algebra ${\mathbb R}_n$
  is a either a matrix algebra of rank $r\ge 1$ over 
  $ \mathbb{R}$, $ \mathbb{C}$ or $ \mathbb{H}$
  or a direct sum of two copies of such a matrix algebra
  \cite{LM, P}.

  An explicit proof of the splitting
  \[
  \mathbb{R}_3\simeq Mat(1\times 1,\mathbb{H})\oplus Mat(1\times 1,\mathbb{H})
  \simeq \mathbb{R}_2\oplus \mathbb{R}_2
  \]
  may be found in the papers \cite{R1,R}. However, the reader should be
  aware
  these papers also contain an incorrect claim that $ \mathbb{R}_n$ is isomorphic to a sum of $2^{n-1}$ copies of the algebra $ \mathbb{R}_2$.
  \end{nb}
Now, we introduce some basic facts about the quadratic cone \cite{GP, GP1}.
\begin{Def}[\cite{GP}]
We call quadratic cone of $ \mathbb{R}_3$ the set
$$ \mathcal{Q}_{\mathbb{R}_3}:= \mathbb{R} \cup \{ x \in \mathbb{R}_3 \setminus \mathbb{R} \, \, | \, t(x) \in \mathbb{R}, \, n(x) \in \mathbb{R}, \, 4n(x)> t(x)^2 \}.$$
\end{Def}
\begin{nb}
This definition has been introduced for general finite-dimensional real alternative algebras.
 The inequality $4n(x)> t(x)^2 $ is relevant for the general case,
  but not in our case.
  In fact, one can check that in $\mathbb{R}_3$  the inequality
  $4n(x)> t(x)^2$ is automatically fulfilled as soon as
  $n(x),t(x)\in\mathbb{R}$.
  Indeed, due to \eqref{split-formula}, the assumption
  $n(x),t(x)\in\mathbb{R}$ implies $n(x)=n(p)=n(q)$,
  $t(x)=t(p)=t(q)$ for $x=\omega_+q+\omega_-p$ with
  $p\in\mathbb{R}_3^+\simeq\mathbb{H}$
  Thus it suffices to check the inequality for quaternions,
  Every quaternion $q$ may be written as $q=s+v$ where $s\in\mathbb{R}$ and $v$
  is imaginary. Then $n(q)=s^2+||v||^2$ and $t(q)=2s$, implying
  $4n(q)>t(q)^2$ if $v\ne 0$.
  
  As a consequence,
$$ \mathcal{Q}_{\mathbb{R}_3}:= \mathbb{R} \cup \{ x \in \mathbb{R}_3 \setminus \mathbb{R} \, \, | \, t(x), n(x) \in \mathbb{R}\}.$$
\end{nb}

In terms of the splitting of ${\mathbb R}_3$ we have:
\[
\mathcal{Q}_{\mathbb{R}_3}
=\left\{ \omega_+q+\omega_-p: p,q\in {\mathbb R}_2, t(p)=t(q), n(p)=n(q)
\right\}
\]

We also define $ \mathbb{S}_{\mathbb{R}_3}:= \{x \in \mathcal{Q}_{\mathbb{R}_3}| \, x^2=-1 \}. $ The elements of $ \mathbb{S}_{\mathbb{R}_3}$ will be called \emph{square roots} of $-1$ in the algebra $ \mathbb{R}_3$.

Next we show
$ \mathbb{S}_{\mathbb{R}_3}\simeq\mathbb{S}_{\mathbb{R}_2} \times \mathbb{S}_{\mathbb{R}_2}$:
\begin{prop}
\label{split2}
\[
\mathbb{S}_{\mathbb{R}_3}
=\left\{ \omega_+q+\omega_-p: p,q\in {\mathbb R}_2, q^2=p^2=-1
\right\}
\]
\end{prop}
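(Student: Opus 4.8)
The plan is to transport the equation $x^2=-1$ through the algebra isomorphism \eqref{ssplit} and reduce it to the analogous equation on each quaternionic factor; everything then becomes a short bookkeeping computation using the componentwise ring structure $(q,p)(q',p')=(qq',pp')$.

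First I would prove the inclusion $\supseteq$. Given $q,p\in\mathbb{R}_2$ with $q^2=p^2=-1$, I set $x=\omega_+q+\omega_-p$. Since $q^2=-1$ forces $q\in\mathbb{S}_{\mathbb{R}_2}$ by the characterisation $\mathbb{S}_{\mathbb{R}_2}=\{q\in\mathbb{R}_2:q^2=-1\}$ recalled above, and likewise for $p$, I obtain $t(q)=t(p)=2\,Re(q)=0$ and $n(q)=n(p)=|q|^2=1$; hence $x$ lies in the quadratic cone by its description in terms of the splitting. Then, using that multiplication in $\omega_+\mathbb{R}_3^{+}\oplus\omega_-\mathbb{R}_3^{+}$ is componentwise, I compute $x^2=\omega_+q^2+\omega_-p^2=-(\omega_++\omega_-)=-e_0=-1$, so $x\in\mathbb{S}_{\mathbb{R}_3}$.

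Next I would prove the reverse inclusion $\subseteq$. Let $x\in\mathbb{S}_{\mathbb{R}_3}$; by \eqref{split} write $x=\omega_+q+\omega_-p$ with $q,p\in\mathbb{R}_2$ uniquely determined. The same componentwise computation gives $x^2=\omega_+q^2+\omega_-p^2$, whereas $-1=e_0\cdot(-1)=\omega_+(-1)+\omega_-(-1)$. Comparing the two expressions and invoking the uniqueness of the representation \eqref{split}, I conclude $q^2=-1$ and $p^2=-1$, which is exactly what is claimed.

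I do not expect a genuine obstacle here: the only point deserving a word of justification is that $q^2=-1$ already implies $t(q)=0$ and $n(q)=1$, so that membership of $x$ in $\mathcal{Q}_{\mathbb{R}_3}$ comes for free in the $\supseteq$ direction — but this is immediate from $q^2=-1\Leftrightarrow q\in\mathbb{S}_{\mathbb{R}_2}$ together with $t(q)=2\,Re(q)$ and $n(q)=|q|^2$.
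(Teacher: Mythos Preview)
Your proposal is correct and follows essentially the same approach as the paper: both proofs use the splitting $x=\omega_+q+\omega_-p$, compute $x^2=\omega_+q^2+\omega_-p^2$ via the idempotent relations, compare with $-1=\omega_+(-1)+\omega_-(-1)$, and invoke uniqueness of the decomposition to obtain $q^2=p^2=-1$, while separately checking that $q^2=p^2=-1$ forces $t(x),n(x)\in\mathbb{R}$ so that membership in $\mathcal{Q}_{\mathbb{R}_3}$ is automatic. The only cosmetic difference is that you organize the argument as two inclusions, whereas the paper phrases it as a chain of equivalences.
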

\begin{proof}
Let $ x= \omega_{+}q+\omega_{-}p$ ($q,p \in \mathbb{R}_2$).
By elevating to the square and using the facts that $ \omega^2_+=\omega_+$, $\omega^2_-= \omega_-$ and $ \omega_+\omega_-=\omega_-\omega_+=0$ we get
$$ x^2=\omega^2_+ q^2+ \omega_+\omega_-qp+\omega_-\omega_+pq+\omega^2_-p^2=\omega_{+} q^2+\omega_{-} p^2.$$
Since $ \omega_{\pm}= \frac{1}{2}(e_0 \pm e_{123})$ we get
$$ x^2=-1 \iff -(\omega_++\omega_-)=-1=x^2=\omega_{+} q^2+\omega_{-} p^2
\iff p^2=-1=q^2 $$
Thus
\[
\{ x \in\mathbb{R}_3:x^2=-1\}
=
\{\omega_+q+\omega_-p: q^2=p^2=-1\}
\]
We observe that $q^2 =p^2=-1$ implies $n(x)=1\in\mathbb{R}$ and $t(x)=0\in\mathbb{R}$.
Therefore
\[
\{ x \in\mathbb{R}_3:x^2=-1\}=
\{ x \in\mathbb{R}_3:x^2=-1,n(x),t(x)\in\mathbb{R}\}
=\{ x \in \mathcal{Q}_{\mathbb{R}_3}:x^2=-1\}
= \mathbb{S}_{\mathbb{R}_3}
\]

This means that $ \mathbb{S}_{\mathbb{R}_3}\cong \mathbb{S}_{\mathbb{R}_2} \oplus \mathbb{S}_{\mathbb{R}_2}\cong\mathbb{S}_{\mathbb{R}_2} \times \mathbb{S}_{\mathbb{R}_2}$.
\end{proof}

The following proposition, proved in \cite[Prop. 3]{GP}, will be important for our results. 
\begin{prop}
\label{one}
The following statements hold
\begin{enumerate}
\item $\mathcal{Q}_{\mathbb{R}_3}= \bigcup_{J \in \mathbb{S}_{\mathbb{R}_3}} \mathbb{C}_J,$
\item If $I,J \in \mathbb{S}_{\mathbb{R}_3}$, $I \neq \pm J$, then $ \mathbb{C}_I \cap \mathbb{C}_J= \mathbb{R}$.
\end{enumerate}
\end{prop}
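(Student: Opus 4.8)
The strategy is to push everything through the algebra isomorphism \eqref{ssplit} and reduce to the familiar quaternionic slice decomposition $\mathbb{R}_2=\bigcup_{I\in\mathbb{S}_{\mathbb{R}_2}}\mathbb{C}_I$, using Proposition \ref{split2} and the trace/norm formula \eqref{split-formula} to translate between square roots of $-1$ in $\mathbb{R}_3$ and pairs of square roots of $-1$ in $\mathbb{R}_2$. I would first record, once and for all, the two elementary facts: every $L\in\mathbb{S}_{\mathbb{R}_3}$ has $t(L)=0$ and $n(L)=1$ (write $L=\omega_{+}q_0+\omega_{-}p_0$ with $q_0^2=p_0^2=-1$ by Proposition \ref{split2}; then $t(q_0)=t(p_0)=0$, $n(q_0)=n(p_0)=1$, and \eqref{split-formula} gives the claim), and, conversely, $\omega_{+}I+\omega_{-}K\in\mathbb{S}_{\mathbb{R}_3}$ whenever $I,K\in\mathbb{S}_{\mathbb{R}_2}$.

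For (1), I would prove the two inclusions separately. The inclusion $\mathbb{C}_J\subseteq\mathcal{Q}_{\mathbb{R}_3}$: given $J=\omega_{+}q_0+\omega_{-}p_0\in\mathbb{S}_{\mathbb{R}_3}$ as above, for $\alpha,\beta\in\mathbb{R}$ we have $\alpha+\beta J=\omega_{+}(\alpha+\beta q_0)+\omega_{-}(\alpha+\beta p_0)$, and since $t(\alpha+\beta q_0)=2\alpha=t(\alpha+\beta p_0)$ and $n(\alpha+\beta q_0)=\alpha^2+\beta^2=n(\alpha+\beta p_0)$, formula \eqref{split-formula} yields $t(\alpha+\beta J),n(\alpha+\beta J)\in\mathbb{R}$, i.e.\ $\alpha+\beta J\in\mathcal{Q}_{\mathbb{R}_3}$. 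For the reverse inclusion, take $x=\omega_{+}q+\omega_{-}p\in\mathcal{Q}_{\mathbb{R}_3}$; by the Remark characterizing the quadratic cone we have $t(q)=t(p)$ and $n(q)=n(p)$. Using the quaternionic slice decomposition write $q=\alpha+\beta I$ and $p=\alpha'+\gamma K$ with $\alpha=t(q)/2$, $\alpha'=t(p)/2$, $\beta,\gamma\geq 0$, and $I,K\in\mathbb{S}_{\mathbb{R}_2}$. Then $\alpha=\alpha'$ (equal traces) and $\beta^2=n(q)-\alpha^2=n(p)-\alpha^2=\gamma^2$, hence $\beta=\gamma$. Therefore $x=\omega_{+}(\alpha+\beta I)+\omega_{-}(\alpha+\beta K)=\alpha+\beta(\omega_{+}I+\omega_{-}K)$, and putting $J:=\omega_{+}I+\omega_{-}K\in\mathbb{S}_{\mathbb{R}_3}$ we get $x\in\mathbb{C}_J$. (If $\beta=0$, then $x\in\mathbb{R}\subset\mathbb{C}_J$ for every $J$.)

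For (2), the inclusion $\mathbb{R}\subseteq\mathbb{C}_I\cap\mathbb{C}_J$ is obvious. Conversely, let $x\in\mathbb{C}_I\cap\mathbb{C}_J$, say $x=a+bI=c+dJ$ with $a,b,c,d\in\mathbb{R}$. Applying $t$ and using $t(I)=t(J)=0$ gives $2a=2c$, so $bI=dJ$. If $b=0$ then $dJ=0$ forces $d=0$ (as $J\neq 0$) and $x=a\in\mathbb{R}$; if $b\neq 0$ then $I=(d/b)J$, and applying $n$ gives $1=(d/b)^2n(J)=(d/b)^2$, so $I=\pm J$, contradicting $I\neq\pm J$. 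Hence $\mathbb{C}_I\cap\mathbb{C}_J=\mathbb{R}$.

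The arguments are essentially formal consequences of \eqref{split-formula} and Proposition \ref{split2}; the only place demanding a bit of care is the sign bookkeeping in the decompositions $q=\alpha+\beta I$, $p=\alpha'+\gamma K$ in part (1): one must choose representatives with $\beta,\gamma\geq 0$ so that equality of norms gives $\beta=\gamma$ rather than just $\beta=\pm\gamma$, and the real points must be handled separately since there the imaginary unit is not determined. I do not anticipate any genuine obstacle beyond this.
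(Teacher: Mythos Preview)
Your proof is correct. Note, however, that the paper does not supply its own proof of this proposition: it simply cites \cite[Prop.~3]{GP}, where the result is established in the general setting of finite-dimensional real alternative $*$-algebras by working directly with the axioms of the quadratic cone (trace and norm real, $4n(x)>t(x)^2$) and showing that for $x\notin\mathbb{R}$ the element $J=(x-t(x)/2)/\sqrt{n(x)-t(x)^2/4}$ lies in $\mathbb{S}_A$.

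Your route is genuinely different: rather than arguing abstractly, you exploit the specific splitting $\mathbb{R}_3\simeq\mathbb{R}_2\oplus\mathbb{R}_2$ of \eqref{ssplit}, together with Proposition~\ref{split2} and formula~\eqref{split-formula}, to reduce everything to the well-known quaternionic slice decomposition. This buys you a completely self-contained argument within the paper's own framework, and makes the $\mathbb{R}_3$ case transparent by translating it into two copies of the $\mathbb{R}_2$ case. The cited approach in \cite{GP}, on the other hand, is more general (it works for any real alternative $*$-algebra, not just $\mathbb{R}_3$) but does not illuminate the product structure that the rest of the present paper relies on. Both are valid; yours is arguably the more natural choice given the emphasis of this paper on the decomposition \eqref{ssplit}.
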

In \cite{GP} the authors studied the quadratic cone for a general finite-dimensional real alternative algebra with unity. They remark that if the algebra is isomorphic to one of the division algebras $\mathbb{R}_2$, $\mathbb{O} $  we have that $ \mathcal{Q}_{\mathbb{R}_2}= \mathbb{R}_2$ and $ \mathcal{Q}_\mathbb{O}= \mathbb{O}$. Furthermore, in these cases, $\mathbb{S}_{\mathbb{R}_2}= \{ q \in \mathbb{R}_2 : \, q^2 =-1 \}$, i.e. is a 2-sphere, and $ \mathbb{S}_{\mathbb{O}}$ is a 6-sphere.

In the case of the Clifford algebra $ \mathbb{R}_n$, for $ n\geq 3$, the quadratic cone $ \mathcal{Q}_{\mathbb{R}_n}$ is a real algebraic subset of $ \mathbb{R}_n$. Now, we recall that an element $(x_0, x_1,...,x_n) \in \mathbb{R}^{n+1}$ can be identified with the element
$$ x=x_0 e_0+ \sum_{j=1}^{n} x_je_j,$$
called, in short paravector. We remark that the subspaces of paravectors $ \mathbb{R}^{n+1}$ is contained in $\mathcal{Q}_{\mathbb{R}_n}$. Moreover, the $(n-1)$- dimensional sphere $ \mathbb{S}= \{x=x_{1}e_1+...+x_ne_n \in \mathbb{R}^{n}| \, x_1^2+...+x_n^2=1 \}$ of unit imaginary vectors is properly contained in $ \mathbb{S}_{\mathbb{R}_n}$.

In particular, for the case $ \mathbb{R}_3$ it is possible to show that the quadratic cone is the 6-dimensional real algebraic set
$$ \mathcal{Q}_{\mathbb{R}_3}= \{x \in \mathbb{R}_3 \,|  \,x_{123}=0, \, x_2x_{13}-x_1x_{23}-x_3 x_{12}=0\}.$$
Moreover $ \mathbb{S}_{\mathbb{R}_3}$ is the intersection of a 5-sphere with the hypersurface $x_1x_{23}-x_2x_{13}+x_{3}x_{12}=0$.

For our future computations it will be essential the following definition.
\begin{Def}
\label{ax}
Let us consider an open subset $D$ of $ \mathbb{C}$, we define $ \Omega_D^4$ as a subset of $\mathbb{R}_3$ such that
$$ \Omega_D^4:= \{x= \alpha+\beta J \in \mathbb{C}_J|\,\alpha, \beta \in \mathbb{R}, \, \alpha+i \beta \in D, \, J \in \mathbb{S}_{\mathbb{R}_3} \}.$$
This kind of set will be called axially symmetric domain. Furthermore, we set
$$ \Omega_D^2:=\{x= \alpha+\beta K \in \mathbb{C}_K|\,\alpha, \beta \in \mathbb{R}, \, \alpha+i \beta \in D, \, K \in \mathbb{S}^2 \},$$
where in this case $ \mathbb{S}^2$ is a generic 2-sphere, which is contained in $ \mathbb{S}_{\mathbb{R}_3}$ (see Proposition \ref{split2}).
\end{Def}
If $ \mathbb{S}^2= \mathbb{S}_{\mathbb{R}_2}$ we obtain the axially symmetric domain of the quaternions, (see \cite{GSS}). For this we will use the following notation $ \Omega_{D}^2(\mathbb{R}_2)$.
From \cite{BD2} we have a relation between the axyally symmetric domains previously defined.
\begin{prop}
\label{np}
Let us consider an open subset $D$ of $ \mathbb{C}$, then we have
$$ \Omega_{D}^4 \cong \Omega_{D}^2(\mathbb{R}_2) \oplus \Omega_{D}^2(\mathbb{R}_2).$$
\end{prop}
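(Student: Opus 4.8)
The plan is to transport the algebra isomorphism $\mathbb{R}_3\simeq\mathbb{R}_2\oplus\mathbb{R}_2$, $x=\omega_+q+\omega_-p$, down to the level of the axially symmetric sets; the key ingredient is Proposition \ref{split2}, which splits the sphere as $\mathbb{S}_{\mathbb{R}_3}\cong\mathbb{S}_{\mathbb{R}_2}\times\mathbb{S}_{\mathbb{R}_2}$. Let $\Phi:\mathbb{R}_3\to\mathbb{R}_2\times\mathbb{R}_2$, $\Phi(x)=(q,p)$, denote the $\mathbb{R}$-linear isomorphism underlying \eqref{ssplit}; by \eqref{split-formula} it restricts to a homeomorphism from $\mathcal{Q}_{\mathbb{R}_3}$ onto $\{(q,p):t(q)=t(p),\ n(q)=n(p)\}$. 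It then suffices to check that $\Phi$ carries $\Omega_D^4$ bijectively onto (the set of pairs that describes) $\Omega_D^2(\mathbb{R}_2)\oplus\Omega_D^2(\mathbb{R}_2)$, after which the homeomorphism claim is automatic because $\Phi$ and $\Phi^{-1}$ are linear.

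For one inclusion, I would take $x=\alpha+\beta J\in\Omega_D^4$, so $\alpha+i\beta\in D$ and $J=\omega_+J_1+\omega_-J_2$ with $J_1,J_2\in\mathbb{S}_{\mathbb{R}_2}$ by Proposition \ref{split2}. Using $e_0=\omega_++\omega_-$ and the fact that multiplication in the splitting is componentwise,
$$x=\alpha(\omega_++\omega_-)+\beta(\omega_+J_1+\omega_-J_2)=\omega_+(\alpha+\beta J_1)+\omega_-(\alpha+\beta J_2),$$
so $\Phi(x)=(\alpha+\beta J_1,\ \alpha+\beta J_2)$ is a pair of elements of $\Omega_D^2(\mathbb{R}_2)$ (because $\alpha+i\beta\in D$) having equal trace and equal norm.

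For the reverse inclusion, I would start from $x=\omega_+q+\omega_-p$ with $q,p\in\Omega_D^2(\mathbb{R}_2)$; since the ambient space is the quadratic cone, $x\in\mathcal{Q}_{\mathbb{R}_3}$, i.e. $t(q)=t(p)$ and $n(q)=n(p)$. Using that $D$ is symmetric with respect to the real axis, pick representations $q=\alpha_1+\beta_1J_1$, $p=\alpha_2+\beta_2J_2$ with $J_1,J_2\in\mathbb{S}_{\mathbb{R}_2}$, $\beta_1,\beta_2\ge 0$ and $\alpha_i+i\beta_i\in D$. Then $t(q)=2\alpha_1=2\alpha_2=t(p)$ and $n(q)=\alpha_1^2+\beta_1^2=\alpha_2^2+\beta_2^2=n(p)$ force $\alpha_1=\alpha_2=:\alpha$ and $\beta_1=\beta_2=:\beta$, so that
$$x=\omega_+(\alpha+\beta J_1)+\omega_-(\alpha+\beta J_2)=\alpha+\beta(\omega_+J_1+\omega_-J_2)=\alpha+\beta J,$$
with $J\in\mathbb{S}_{\mathbb{R}_3}$ (Proposition \ref{split2}) and $\alpha+i\beta\in D$; hence $x\in\Omega_D^4$. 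Combined with the first inclusion and the linearity of $\Phi$, this yields $\Omega_D^4\cong\Omega_D^2(\mathbb{R}_2)\oplus\Omega_D^2(\mathbb{R}_2)$.

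I expect the only genuine care to be in the bookkeeping around the non-uniqueness of the expression $x=\alpha+\beta J$ (the ambiguity $(\beta,J)\leftrightarrow(-\beta,-J)$, and the collapse $\alpha+\beta J=\alpha$ on the real axis): it is exactly the symmetry of $D$ with respect to the real axis that makes $\Omega_D^4$ and $\Omega_D^2(\mathbb{R}_2)$ well defined and that permits the simultaneous normalization $\beta_1,\beta_2\ge 0$ above. One should also bear in mind that here ``$\Omega_D^2(\mathbb{R}_2)\oplus\Omega_D^2(\mathbb{R}_2)$'' is understood inside $\mathcal{Q}_{\mathbb{R}_3}$ --- equivalently, it is the fibre product of $\Omega_D^2(\mathbb{R}_2)$ with itself over the trace-and-norm map $q\mapsto(t(q),n(q))$ --- so that the condition $t(q)=t(p)$, $n(q)=n(p)$ is part of the right-hand side; this is what cuts the $6$-dimensional set $\Omega_D^4$ out of the naive $8$-dimensional product $\Omega_D^2(\mathbb{R}_2)\times\Omega_D^2(\mathbb{R}_2)$.
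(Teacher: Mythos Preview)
The paper does not actually prove this proposition: it is simply attributed to the forthcoming reference \cite{BD2}. Your argument is correct and is precisely the natural one suggested by the machinery already set up in the paper --- the algebra splitting \eqref{ssplit}, the description of $\mathcal{Q}_{\mathbb{R}_3}$ via \eqref{split-formula}, and Proposition~\ref{split2} for the sphere --- so there is nothing to compare against; you have filled in what the paper outsources. Your closing remark is particularly apt: the notation ``$\Omega_D^2(\mathbb{R}_2)\oplus\Omega_D^2(\mathbb{R}_2)$'' must be read inside $\mathcal{Q}_{\mathbb{R}_3}$, i.e.\ as the fibre product over $(t,n)$, not as the bare Cartesian product, and your proof makes this explicit where the paper leaves it implicit.

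One small point of bookkeeping: the proposition is stated for an arbitrary open $D\subset\mathbb{C}$, while your reverse inclusion invokes the symmetry of $D$ to normalize $\beta_1,\beta_2\ge 0$. This is harmless, since by the very definition $\Omega_D^4=\Omega_{D\cup\bar D}^4$ and $\Omega_D^2(\mathbb{R}_2)=\Omega_{D\cup\bar D}^2(\mathbb{R}_2)$, so one may replace $D$ by its symmetrization without loss of generality; it would be worth saying this in one line rather than leaving it as a parenthetical.
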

By Proposition \ref{one} follows that $\Omega_D^4$ is a relatively open subset of the quadratic cone $ \mathcal{Q}_{\mathbb{R}_3}$.
\\We define $ \mathbb{R}_3 \otimes \mathbb{C}$ as the complexification of $ \mathbb{R}_3$. We will use the representation
$$ \mathbb{R}_3 \otimes \mathbb{C}= \{ w=x+\iota y|\, x,y \in \mathbb{R}_3 \} \qquad (\iota^2=-1).$$
\begin{Def}
\label{slice}
A function $F: D \subseteq \mathbb{C} \to \mathbb{R}_3 \otimes \mathbb{C}$ defined by $F(z)= F_1(z)+i F_2(z)$ where $z= \alpha+i \beta \in D$, $F_1,F_2: D \to \mathbb{R}_3$, and where $F_1(\bar{z})=F_1(z)$ and $F_2(\bar{z})=-F_2(z)$ whenever $z, \bar{z} \in D$, is called stem function. Given a stem function $F$, the function $f= \mathcal{I}(F)$ defined by
$$ f(x)=f(\alpha+ \beta J):=F_1(z)+JF_2(z)$$
for any $ x \in \Omega_D^4$ is called the slice function induced by $F$.
\end{Def}
We will denote the set of (left) slice functions on $ \Omega_D^4$ by
$$ \mathcal{S}(\Omega_D^4):= \{ f: \Omega_D^4 \to \mathbb{R}_3| \,  f=\mathcal{I}(F), \, F:D \to \mathbb{R}_3 \otimes \mathbb{C} \, \, \, \hbox{stem function} \}.$$

\begin{prop}
\label{Rap1}
Let $f \in \mathcal{S}(\Omega_D^4)$ and $J \in \mathbb{S}_{\mathbb{R}_3}$. Then the following equality holds for all $x= \alpha+ \beta I \in \Omega_D^4 \cap \mathbb{C}_I$
\begin{equation}\label{Rap}
f(x)= \frac{1}{2} [f( \alpha+ \beta J)+f( \alpha-\beta J)]+ \frac{I}{2}\bigl[J[ f(\alpha- \beta J)-f(\alpha+\beta J)] \bigl]= F_1(\alpha, \beta)+I F_2(\alpha, \beta).
\end{equation}
Moreover, $F_1$ and $F_2$ depend only on $\alpha$, $\beta$ but they do not depend on $J \in \mathbb{S}_{\mathbb{R}_3}$.
\end{prop}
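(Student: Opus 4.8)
The plan is to reduce the assertion to the defining formula of a slice function (Definition \ref{slice}) together with a one-line algebraic computation that uses only associativity of $\mathbb{R}_3$ and the identity $J^2=-1$, valid for every $J\in\mathbb{S}_{\mathbb{R}_3}$. First I would fix $x=\alpha+\beta I\in\Omega_D^4\cap\mathbb{C}_I$ and an arbitrary $J\in\mathbb{S}_{\mathbb{R}_3}$, and check that the evaluations appearing on the right-hand side of \eqref{Rap} make sense: since $x\in\Omega_D^4$ we have $\alpha+i\beta\in D$, and writing $\alpha-\beta J=\alpha+\beta(-J)$ with $-J\in\mathbb{S}_{\mathbb{R}_3}$, Definition \ref{ax} shows that both $\alpha+\beta J$ and $\alpha-\beta J$ lie in $\Omega_D^4$, so $f(\alpha\pm\beta J)$ are defined. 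It is important here to absorb the sign into the imaginary unit rather than to rewrite $\alpha-\beta J$ as $\alpha+(-\beta)J$, which would require $\alpha-i\beta\in D$.

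Next I would write $f=\mathcal{I}(F)$ with $F=F_1+iF_2$ and abbreviate $A:=F_1(\alpha+i\beta)$, $B:=F_2(\alpha+i\beta)\in\mathbb{R}_3$. Applying Definition \ref{slice} at the imaginary units $I$, $J$ and $-J$ respectively then gives $f(x)=f(\alpha+\beta I)=A+IB$, $f(\alpha+\beta J)=A+JB$ and $f(\alpha-\beta J)=f(\alpha+\beta(-J))=A-JB$.

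Substituting the last two identities into the right-hand side of \eqref{Rap}, the symmetric combination collapses to $\tfrac{1}{2}\big[(A+JB)+(A-JB)\big]=A$, while for the antisymmetric term one has $(A-JB)-(A+JB)=-2JB$, hence $J\big(-2JB\big)=-2J^2B=2B$ by associativity and $J^2=-1$, so that $\tfrac{I}{2}(2B)=IB$. Adding, the right-hand side of \eqref{Rap} equals $A+IB$, which is simultaneously the final expression $F_1(\alpha,\beta)+IF_2(\alpha,\beta)$ in \eqref{Rap} and, by the previous step, $f(x)$ itself. Since $A$ and $B$ depend only on $F$ (equivalently on $f$) and on the real parameters $\alpha,\beta$, with no reference to $J$, this at once proves the displayed formula and the last assertion, namely that the $J$-dependent expression on the left of \eqref{Rap} does not depend on the choice of $J\in\mathbb{S}_{\mathbb{R}_3}$.

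I do not expect a genuine obstacle. The only points requiring a little care are the sign bookkeeping just mentioned --- keeping the argument valid without assuming that $D$ is symmetric --- and the explicit use of associativity of $\mathbb{R}_3$ in the step $J(JB)=J^2B$: slice function theory is frequently developed over merely alternative algebras, where such a rearrangement needs extra justification, but $\mathbb{R}_3$ is associative, so here it is immediate.
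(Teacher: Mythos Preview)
Your argument is correct. It is also genuinely different from the paper's proof, and in fact more elementary. The paper obtains the first identity in \eqref{Rap} by citing the representation formula proved in \cite[Prop.~6]{GP} for slice functions on a general real alternative algebra; then, to show that $F_1$ and $F_2$ do not depend on $J$, it substitutes the formula \eqref{Rap} into itself, expanding $\tfrac{1}{2}[f(\alpha+\beta J)+f(\alpha-\beta J)]$ via \eqref{Rap} with the roles of $I$ and $J$ interchanged, and observes that the $J$-dependent terms cancel. You instead go back to Definition~\ref{slice}, write $f=\mathcal{I}(F_1+iF_2)$, read off $f(\alpha\pm\beta J)=F_1(\alpha+i\beta)\pm JF_2(\alpha+i\beta)$ directly, and verify \eqref{Rap} by a one-line computation using $J^2=-1$; independence of $J$ is then automatic because $F_1(\alpha+i\beta)$ and $F_2(\alpha+i\beta)$ manifestly do not involve $J$. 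Your route is shorter and entirely self-contained within the present paper, while the paper's self-substitution argument has the mild advantage of showing that the two halves of \eqref{Rap} are $J$-independent purely from the functional equation, without explicitly invoking the stem function. Your care in writing $\alpha-\beta J=\alpha+\beta(-J)$ is also well placed: at this point in the paper $D$ has not yet been assumed symmetric, so this is the correct way to ensure that $f(\alpha-\beta J)$ is defined via Definition~\ref{slice}.
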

\begin{proof}
In \cite[Prop. 6]{GP} the authors prove a representation formula for slice functions in a finite-dimensional real alternative algebra with unity. The second part of the proof follows directly by \eqref{Rap}. In fact we have
\begin{eqnarray*}
&& \frac{1}{2}[f(\alpha+\beta J)+f(\alpha- \beta J)]= \frac{1}{2} \biggl[ \frac{1}{2} [f( \alpha+\beta I)+f( \alpha- \beta I)]+ \frac{JI}{2}[ f(\alpha- \beta I)-f(\alpha+\beta I)]+\\
&& + \frac{1}{2} [f( \alpha+\beta I)+f( \alpha- \beta I)]- \frac{JI}{2}[ f(\alpha- \beta I)-f(\alpha+\beta I)] \biggl]=\frac{1}{2}[f(\alpha+\beta I)+f(\alpha- \beta I)],
\end{eqnarray*}
and so $F_1$, and similarly $F_2$, depend on $ \alpha$, $ \beta$ only.
\end{proof}
\begin{Def}
A (left) slice function $f \in \mathcal{S}(\Omega_D^4)$ is (left) slice regular if its stem function $F$ is holomorphic.
\end{Def}
We will denote the set of slice regular functions on $\Omega_D^4$ by
$$ \mathcal{SR}(\Omega_D^4)= \{ f \in \mathcal{S}(\Omega_D^4)| \, f= \mathcal{I}(F), \, F:D \to \mathbb{R}_{3} \otimes \mathbb{C} \, \, \hbox{holomorphic} \}.$$

In \cite[Prop.1]{GS1} the authors showed that $\mathbb{S}_{\mathbb{R}_3}$ is the intersection of a 5-sphere with the hypersurface $x_1x_{23}-x_2x_{13}+x_{3}x_{12}=0$.

\begin{nb}
In the Clifford algebra $ \mathbb{R}_3$ it is possible to define the sphere $ \mathbb{S}^6$ of pure imaginary  units (i.e. the set in which the square of the imaginary units is equal to minus one). Unlike what happens over the quaternions, the sphere $ \mathbb{S}_{\mathbb{R}_{3}}$ does not coincide with $ \mathbb{S}^6$, but it is properly contained. For instance, the imaginary unit $e_{123}$ stays in $ \mathbb{S}^6$ but not in $ \mathbb{S}_{\mathbb{R}_3}$. Moreover, we observe that we have defined the axially symmetric domain using the smallest sphere (see Definition \ref{ax}).
\end{nb}

\section{Runge's Theorem}
In this section we study the Clifford analogous of the classical complex Runge theory. In particular, we focus on the extension in the Clifford algebra $ \mathbb{R}_3$ of the topological characterization of this theorem. A quaternionic generalization of this theorem is showed in \cite[Thm. 1.3]{BW1}. From now on the set $D$ is a symmetric open subset of $ \mathbb{C}$ with respect to the real line.
\begin{theorem}
\label{main}
Let $D \subset D_1$ be symmetric open subsets of $\mathbb{C}$ and let $ \Omega_D^4 \subset \Omega_{D_1}^4$ be the corresponding axially symmetric open subsets of $ \mathcal{Q}_{\mathbb{R}_3}$ (as defined in Definition \ref{ax}). Then the following conditions are equivalent
\begin{itemize}
\item[1)] $D \subset D_1$ is a Runge pair. This means that every holomorphic function on D can be approximated by holomorphic functions on $D_1$ (uniformly on compact sets).
\item [2)] $ \Omega_D^4$ is Runge in $\Omega_{D_1}^4$, in the sense that every slice regular function on $ \Omega_D^4$ can be approximated (uniformly on compact sets) by slice regular functions on $ \Omega_{D_1}^4$.
\item [3)] $i_{*}:H_1(D) \to H_1(D_1)$ is injective, where $i_{*}$ is the homology group homomorphism induced by the inclusion map $i:D \to D_1$.
\item [4)] $i_{*}: H_{k}(\Omega_D^4) \to H_{k}(\Omega_{D_1}^4)$ is injective for $k \in \{1,3,5\}$, where $i_{*}$ is the homomorphism induced by the inclusion map $i: \Omega_D^4 \to \Omega_{D_1}^4$.
\item [5)] Every bounded connected component of $ \mathbb{C} \setminus D$ intersects $ \mathbb{C} \setminus D_1$.
\item[6)] Every bounded connected component of  $ \mathcal{Q}_{\mathbb{R}_3}\setminus \Omega_D^4$ intersects $ \mathcal{Q}_{\mathbb{R}_3} \setminus \Omega_{D_1}^4$.
\end{itemize}
\end{theorem}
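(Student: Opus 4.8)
The strategy is to establish the chain of implications outlined in the introduction, namely $1)\Leftrightarrow 3)\Leftrightarrow 5)$, $1)\Leftrightarrow 2)$, $5)\Leftrightarrow 6)$, $3)\Leftrightarrow 4)$, together with $6)\Rightarrow 2)$, so that all six statements become provably equivalent. The equivalences $1)\Leftrightarrow 3)\Leftrightarrow 5)$ are classical one-variable complex analysis: $1)\Leftrightarrow 5)$ is the topological form of the Runge approximation theorem for the symmetric pair $D\subset D_1$ (using that holomorphic approximation on a pair is controlled by whether every bounded component of the complement of $D$ meets the complement of $D_1$), while $3)\Leftrightarrow 5)$ follows from Alexander duality on $S^2=\mathbb{C}\cup\{\infty\}$: the map $H_1(D)\to H_1(D_1)$ is dual to the map on reduced $\widetilde H^0$ of the complements, and injectivity of the former is equivalent to surjectivity of $\widetilde H_0(\mathbb{C}\setminus D_1)\to\widetilde H_0(\mathbb{C}\setminus D)$, i.e.\ to no bounded component of $\mathbb{C}\setminus D$ being swallowed entirely inside $D_1$. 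I would cite \cite{BW1} for the precise packaging of these facts since the quaternionic paper proves exactly this.

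\textbf{The topological computation $3)\Leftrightarrow 4)$.} This is the genuinely new ingredient. By Proposition \ref{np} and Proposition \ref{split2}, $\Omega_D^4\cong \Omega_D^2(\mathbb{R}_2)\times\Omega_D^2(\mathbb{R}_2)$ as topological spaces, where $\Omega_D^2(\mathbb{R}_2)$ is the quaternionic axially symmetric domain over $D$. The plan is first to recall (or reprove, following \cite{BW1}) the homology of $\Omega_D^2(\mathbb{R}_2)$: writing $D^+=D\cap\{\beta>0\}$, one has $\Omega_D^2(\mathbb{R}_2)$ fibering over $D^+$ with fiber $S^2$ away from the real axis and collapsing the sphere over $D\cap\mathbb{R}$, so that $H_0(\Omega_D^2(\mathbb{R}_2))=H_0(D)$, $H_1(\Omega_D^2(\mathbb{R}_2))\cong H_1(D)$ (via $i_*$), $H_2(\Omega_D^2(\mathbb{R}_2))$ and $H_3(\Omega_D^2(\mathbb{R}_2))$ being certain groups built from $H_1(D)$ and $H_0(D^+\setminus\mathbb{R})$ — the key point being that the maps on $H_1$ and $H_3$ are both identified with $i_*:H_1(D)\to H_1(D_1)$ up to isomorphism. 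Then I would apply the Künneth formula to $\Omega_D^4\cong\Omega_D^2(\mathbb{R}_2)\times\Omega_D^2(\mathbb{R}_2)$ to express $H_k(\Omega_D^4)$ for $k=1,\dots,5$ as direct sums of tensor products $H_i\otimes H_j$ with $i+j=k$ (over $\mathbb{Z}$ there may be Tor terms; since the relevant groups turn out to be free, these vanish, which I would check). Tracking the inclusion-induced map through Künneth, $i_*:H_k(\Omega_D^4)\to H_k(\Omega_{D_1}^4)$ is the product of the two copies of the quaternionic map, and one reads off that for $k\in\{1,3,5\}$ this map is injective if and only if $i_*:H_1(D)\to H_1(D_1)$ is injective, because in each such $k$ at least one summand is (a tensor power involving) $H_1(D)$ while the "pure" summands not involving $H_1(D)$ are injective automatically (they come from $H_0$ and $H_2$-type pieces which inject for trivial connectivity reasons). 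The main obstacle here is bookkeeping: correctly computing $H_2,H_3$ of the quaternionic model and then carefully matching summands through Künneth so that the "if and only if" is clean rather than merely "injective on one side implies injective on the other" — degenerate summands where both factors contribute could in principle hide a failure of injectivity, so this requires care.

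\textbf{The analytic equivalence $1)\Leftrightarrow 2)$ and the implication $6)\Rightarrow 2)$.} For $1)\Rightarrow 2)$ one uses the stem-function correspondence $f=\mathcal I(F)$: a slice regular $f$ on $\Omega_D^4$ is determined by a $\mathbb{R}_3\otimes\mathbb{C}$-valued holomorphic $F$ on $D$, whose components are ordinary holomorphic functions on $D$; approximate each scalar component by holomorphic functions on $D_1$, reassemble into a stem function on $D_1$, and induce a slice regular function on $\Omega_{D_1}^4$. The norm equivalence $\frac{1}{\sqrt2}\|F(\alpha+i\beta)\|\le\max\{|f(\alpha+\beta J)|,|f(\alpha-\beta J)|\}\le\sqrt2\,\|F(\alpha+i\beta)\|$ quoted in the introduction is exactly what converts uniform-on-compacta convergence of stem functions into uniform-on-compacta convergence of slice functions and back; I would prove it by splitting $x=\omega_+q+\omega_-p$, using $n(x)=\omega_+n(q)+\omega_-n(p)$ and the fact that $\omega_\pm$ are orthogonal idempotents of norm $\tfrac12$, reducing to the quaternionic estimate. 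For the converse $2)\Rightarrow 1)$, restrict slice regular functions to a single slice $\mathbb{C}_J$ and to the "real component," recovering holomorphic functions on $D$ and their approximants on $D_1$. Finally $6)\Rightarrow 2)$: using the product representation $\mathcal{Q}_{\mathbb{R}_3}\cong(\text{slice data})$ and Proposition \ref{np}, a bounded component of $\mathcal{Q}_{\mathbb{R}_3}\setminus\Omega_D^4$ corresponds to a bounded component of $\mathbb{C}\setminus D$ (its "profile" in the $(\alpha,\beta)$-plane), and the hypothesis that it meets $\mathcal{Q}_{\mathbb{R}_3}\setminus\Omega_{D_1}^4$ translates back to $5)$ for $D\subset D_1$, which gives $1)$ and hence $2)$; since $5)\Leftrightarrow 6)$ is by the same translation (which I would state as a short lemma identifying components of the complements in the cone with components of the complement in $\mathbb{C}$, again via the product/slice structure), this closes the loop. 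I expect $6)\Rightarrow 2)$ to be mostly formal once the component-correspondence lemma is in place; the subtlety is only in checking that boundedness is preserved by the correspondence, which follows because the norm on the cone is comparable to the modulus of $\alpha+i\beta$ on each slice.
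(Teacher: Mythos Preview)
Your approach to $3)\Leftrightarrow 4)$ rests on the claim that $\Omega_D^4\cong\Omega_D^2(\mathbb{R}_2)\times\Omega_D^2(\mathbb{R}_2)$ as topological spaces, after which you invoke K\"unneth. This is false, and the rest of that section collapses with it. Under the splitting $x=\omega_+q+\omega_-p$, a point of the quadratic cone must satisfy $t(p)=t(q)$ and $n(p)=n(q)$; for quaternions this forces $\mathrm{Re}(p)=\mathrm{Re}(q)$ and $|\mathrm{Im}(p)|=|\mathrm{Im}(q)|$. Hence $\Omega_D^4$ sits inside $\Omega_D^2(\mathbb{R}_2)\times\Omega_D^2(\mathbb{R}_2)$ only as the ``diagonal'' locus where both factors have the same real part and the same imaginary modulus, i.e.\ as a fibre product over $D^+$ with fibre $\mathbb{S}_{\mathbb{R}_3}\cong S^2\times S^2$, not as the full product. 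Proposition~\ref{np} uses the symbol $\oplus$ for this fibred object, not for a cartesian product. A sanity check: the paper's Corollary after Proposition~\ref{desc} gives $b_1(\Omega_D^4)=b_1(\Omega_D^2(\mathbb{R}_2))$, whereas K\"unneth on a genuine product $X\times X$ would give $b_1=2\,b_1(X)$. Your identification $H_1(\Omega_D^2(\mathbb{R}_2))\cong H_1(D)$ is also incorrect (it is $H_1(D^+)$, which can be strictly smaller, e.g.\ for a symmetric annulus centered on $\mathbb{R}$), so even the quaternionic input you feed into K\"unneth is wrong.

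The paper does not use K\"unneth on any product of axially symmetric domains. Instead it computes $H_k(\Omega_D^4)$ directly via a Mayer--Vietoris decomposition $\Omega_D^4=\Omega_{D^*}^4\cup\Omega_W^4$ adapted to the real axis (Proposition~\ref{desc}), obtaining short exact sequences that express $H_1,H_3,H_5$ of $\Omega_D^4$ in terms of $H_1(D^+)$ and $\widehat H_0(D_{\mathbb{R}})$. It then builds natural exact sequences $0\to H_1(D^+)\to H_1(D)\to H_5(\Omega_D^4)\to 0$ and $0\to H_1(D^+)^2\to H_1(D)^2\to H_3(\Omega_D^4)\to 0$ (Propositions~\ref{res21} and~\ref{res2b}), and combines these with snake-lemma arguments and two geometric lemmas (Lemmas~\ref{res5}, \ref{res5b}) to get the injectivity equivalences. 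Your outline for $1)\Leftrightarrow 2)$ and $5)\Leftrightarrow 6)$ is essentially the paper's; your $6)\Rightarrow 2)$ via $6)\Rightarrow 5)\Rightarrow 1)\Rightarrow 2)$ is logically fine but not the direct rational-approximation argument the paper gives in Theorem~\ref{th1}.
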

\begin{nb}
The main difference with the quaternionic case is that more homological groups are affected. Indeed, in \cite[Thm. 1.3]{BW1} the map $i_{*}$ is injective only for $k \in \{1,3 \}$.
\end{nb}
We prove Theorem \ref{main} by proving the following equivalences:
$$ 1) \Longleftrightarrow 3) \Longleftrightarrow 5),$$
$$ 1) \Longleftrightarrow 2),$$
$$ 5)\Longleftrightarrow 6),$$
$$ 3) \Longleftrightarrow 4),$$
and the implication $$ 6) \Longrightarrow 2),$$
which may be of interest in some other contexts. We start by proving the classical ones: $ 1) \Longleftrightarrow 3) \Longleftrightarrow 5).$
\\These equivalences are well-known in the complex case (see \cite{BS} and \cite[Paragraph 13.2.1]{RE}).
\begin{prop}
Let $D \subset D_1$ be open subsets of $ \mathbb{C}$. Then the following properties are equivalent
\begin{itemize}
\item [1)] The inclusion map induces an injective group homomorphism $H_1(D) \to H_1(D_1)$.
\item [2)] Every bounded connected component of $ \mathbb{C} \setminus D$ intersects $ \mathbb{C} \setminus D_1$.
\item[3)] For every holomorphic function $f$ on $D$, every $ \varepsilon>0$ and every compact subset $K \subset D$ there exists a holomorphic function $g$ on $D_1$ with $\sup_{p \in K}|f(p)-g(p)| < \varepsilon$.
\end{itemize}
\end{prop}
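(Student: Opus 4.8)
The plan is to prove the three conditions equivalent by closing the cycle of implications $2)\Rightarrow 3)\Rightarrow 1)\Rightarrow 2)$, using throughout the standard homological description of a planar open set: for $U\subseteq\mathbb{C}$ open and $\gamma$ a $1$-cycle in $U$, one has $[\gamma]=0$ in $H_1(U)$ if and only if $\mathrm{Ind}_\gamma(a)=0$ for every $a\in\mathbb{C}\setminus U$ (the homology version of Cauchy's theorem; see e.g. \cite{RE}). For \textbf{$2)\Rightarrow 3)$} I would invoke the domain form of Runge's theorem: if $\Omega\subseteq\mathbb{C}$ is open and $A\subseteq(\mathbb{C}\cup\{\infty\})\setminus\Omega$ meets every connected component of $(\mathbb{C}\cup\{\infty\})\setminus\Omega$, then every holomorphic function on $\Omega$ is a locally uniform limit of rational functions with poles only in $A$. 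Applying this with $\Omega=D$: the components of $(\mathbb{C}\cup\{\infty\})\setminus D$ are the one containing $\infty$ together with the bounded components of $\mathbb{C}\setminus D$, and by $2)$ each of the latter contains a point of $\mathbb{C}\setminus D_1$. Taking $A$ to be $\{\infty\}$ together with one such point for each bounded component gives $A\subseteq(\mathbb{C}\cup\{\infty\})\setminus D_1$, so all the approximants are holomorphic on $D_1$; this is $3)$.

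For \textbf{$3)\Rightarrow 1)$} I would argue by contraposition. If $i_*\colon H_1(D)\to H_1(D_1)$ is not injective, pick a $1$-cycle $\gamma$ in $D$ with $[\gamma]=0$ in $H_1(D_1)$ but $[\gamma]\neq 0$ in $H_1(D)$; by the criterion above there is $a\in\mathbb{C}\setminus D$ with $\mathrm{Ind}_\gamma(a)\neq 0$. Put $f(z)=1/(z-a)\in\mathcal{O}(D)$ and $K=|\gamma|$. Every $g\in\mathcal{O}(D_1)$ satisfies $\int_\gamma g\,dz=0$ (Cauchy's theorem in $D_1$, since $[\gamma]=0$ there), whereas $\int_\gamma f\,dz=2\pi i\,\mathrm{Ind}_\gamma(a)\neq 0$; since $\bigl|\int_\gamma(f-g)\,dz\bigr|\le \mathrm{length}(\gamma)\cdot\sup_K|f-g|$, the function $f$ cannot be uniformly approximated on $K$ by functions holomorphic on $D_1$, so $3)$ fails.

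For \textbf{$1)\Rightarrow 2)$}, again by contraposition, suppose a bounded component $C$ of $\mathbb{C}\setminus D$ is disjoint from $\mathbb{C}\setminus D_1$, i.e. $C\subseteq D_1$. Then $C$ is compact and the numbers $\mathrm{dist}(C,\mathbb{C}\setminus D_1)$ and $\mathrm{dist}(C,(\mathbb{C}\setminus D)\setminus C)$ are strictly positive (read as $+\infty$ in the degenerate cases $D_1=\mathbb{C}$ or $\mathbb{C}\setminus D=C$). Overlay on $\mathbb{C}$ a square grid of mesh small enough that every closed grid square meeting $C$ both lies in $D_1$ and meets $\mathbb{C}\setminus D$ only inside $C$; let $\Sigma$ be the union of all closed grid squares meeting $C$ and let $\gamma=\partial\Sigma$ be the induced edge $1$-chain. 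Since a point of $C$ lying on a shared edge or corner of the grid forces every square incident to it into $\Sigma$, we get $|\gamma|\cap C=\emptyset$, hence $|\gamma|\subseteq\Sigma\setminus C\subseteq D$, so $\gamma$ is a cycle in $D$; moreover $\mathrm{Ind}_\gamma(a)=1$ for every $a\in C$, so $[\gamma]\neq 0$ in $H_1(D)$. On the other hand $\Sigma$ is a $2$-chain lying in $D_1$ with $\partial\Sigma=\gamma$, so $[\gamma]=0$ in $H_1(D_1)$; thus $i_*$ is not injective.

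The two implications involving approximation rest directly on classical theorems, so the only genuinely delicate point is the grid construction in $1)\Rightarrow 2)$: one has to verify that a single mesh can be chosen small enough to meet all three geometric requirements at once ($\Sigma\subseteq D_1$, $\;\Sigma\cap(\mathbb{C}\setminus D)\subseteq C$, $\;|\gamma|\cap C=\emptyset$), and that the cellular boundary $\partial\Sigma$ is precisely the edge cycle encircling $C$ with winding number $1$; the rest is bookkeeping.
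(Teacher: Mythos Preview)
The paper does not actually prove this proposition: it is recorded as a classical fact with references to Behnke--Stein \cite{BS} and Remmert \cite[Paragraph 13.2.1]{RE}. Your cycle $2)\Rightarrow 3)\Rightarrow 1)\Rightarrow 2)$ is the standard route, and the first two implications are correct as written.

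In the step $1)\Rightarrow 2)$, however, there is a genuine gap. You assert that $\mathrm{dist}\bigl(C,(\mathbb{C}\setminus D)\setminus C\bigr)>0$, but this fails in general: a connected component of a closed planar set need not be open in that set. For instance, with $D=\mathbb{C}\setminus\bigl(\{0\}\cup\{1/n:n\ge 1\}\bigr)$ and $D_1=\mathbb{C}$, the bounded component $C=\{0\}$ of $\mathbb{C}\setminus D$ is accumulated by the other components, so no mesh size makes every grid square touching $C$ avoid $(\mathbb{C}\setminus D)\setminus C$, and your inclusion $\Sigma\setminus C\subseteq D$ breaks down. The usual repair is to first separate $C$ inside a compact piece: choose a bounded open $U$ with $C\subseteq U\subseteq\overline{U}\subseteq D_1$; then $C$ is a connected component of the compact set $K_0:=\overline{U}\cap(\mathbb{C}\setminus D)$, and since in compact metric spaces components coincide with quasi-components, there is a set $V$ clopen in $K_0$ with $C\subseteq V\subseteq U$. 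As $V\subseteq U$, one checks that $V$ is in fact open in $\mathbb{C}\setminus D$, so $(\mathbb{C}\setminus D)\setminus V$ is closed and disjoint from the compact set $V$, whence $\mathrm{dist}\bigl(V,(\mathbb{C}\setminus D)\setminus V\bigr)>0$. Running your grid construction with $V$ in place of $C$ then produces a cycle $\gamma$ with $|\gamma|\subseteq D$, $[\gamma]=0$ in $H_1(D_1)$, and $\mathrm{Ind}_\gamma(a)=1$ for $a\in C$, as required.
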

Now, we prove the implication $ 1) \Longleftrightarrow 2)$ of Theorem \ref{main}.
\\Before, we show an auxiliary result, which is similar to \cite[Lemma 2.3]{BW1}.
\begin{lemma}
\label{imp1}
Let $f: \Omega_D^4 \subset \mathcal{Q}_{\mathbb{R}_3} \to \mathbb{R}_3$ be a left slice function induced by a stem function $F$. Then
\begin{equation}\label{ineq1}
\frac{1}{\sqrt{2}} \| F(\alpha+i \beta) \| \leq \max \{| f(\alpha+\beta J)|, |f(\alpha-\beta J)| \} \leq \sqrt{2} \| F( \alpha+i \beta) \|
\end{equation}
for every $ \alpha, \beta \in \mathbb{R}$, $J \in \mathbb{S}_{\mathbb{R}_{3}}$.
\end{lemma}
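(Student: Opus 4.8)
The natural strategy is to pass through the splitting $\mathbb{R}_3\simeq\mathbb{R}_2\oplus\mathbb{R}_2$ and reduce everything to a pointwise linear-algebra estimate. Write $x=\alpha+\beta J\in\Omega_D^4$. By definition of a slice function, $f(\alpha+\beta J)=F_1(z)+JF_2(z)$ and $f(\alpha-\beta J)=F_1(z)-JF_2(z)$, where $z=\alpha+i\beta$ and $F_1,F_2$ take values in $\mathbb{R}_3$. So the whole statement is really about comparing the norm of the pair $(F_1,F_2)$ (which controls $\|F(z)\|$, since $\|F(z)\|^2=\|F_1(z)\|^2+\|F_2(z)\|^2$ for the natural Hermitian norm on $\mathbb{R}_3\otimes\mathbb{C}$) with $\max\{|F_1+JF_2|,|F_1-JF_2|\}$. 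Since $J\in\mathbb{S}_{\mathbb{R}_3}$ and the elements of $\mathbb{S}_{\mathbb{R}_3}$ are precisely those of the form $\omega_+ J_q+\omega_- J_p$ with $J_q^2=J_p^2=-1$ (Proposition \ref{split2}), I would first decompose $J=\omega_+J_q+\omega_-J_p$ and $F_i=\omega_+ F_i^q+\omega_-F_i^p$ with quaternionic components.

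Next, the key point is that the norm on $\mathbb{R}_3$ is not quite the direct sum of the two quaternionic norms under the idempotent decomposition, but differs from it only by a bounded factor. Concretely, for $x=\omega_+q+\omega_-p$ one has, using $\omega_\pm=\tfrac12(e_0\pm e_{123})$ and that $\{e_0,e_{123}\}$ together with the even part give an orthonormal-up-to-scale picture, an equivalence of the form $\tfrac12(|q|^2+|p|^2)\le |x|^2\le \tfrac12(|q|^2+|p|^2)$ — i.e. in fact $|x|^2=\tfrac12(|q|^2+|p|^2)$ if we use the Euclidean norm on $\mathbb{R}_3$ coming from the basis \eqref{class}, because $\omega_+$ and $\omega_-$ are orthogonal and each has norm $1/\sqrt2$. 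This is exactly where the $\sqrt2$ factors come from. Once this normalization is pinned down, the inequality \eqref{ineq1} in the Clifford algebra follows by applying the corresponding quaternionic inequality (this is \cite[Lemma 2.3]{BW1}, which gives $\tfrac{1}{\sqrt2}\|F^\bullet(z)\|\le\max\{|f^\bullet(\alpha+\beta J_\bullet)|,|f^\bullet(\alpha-\beta J_\bullet)|\}\le\sqrt2\|F^\bullet(z)\|$) separately to the $\omega_+$-component (with $J_q$) and the $\omega_-$-component (with $J_p$), and then combining via the norm identity $|\omega_+a+\omega_-b|^2=\tfrac12(|a|^2+|b|^2)$, where $a=f^q(\alpha\pm\beta J_q)$, $b=f^p(\alpha\pm\beta J_p)$.

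Alternatively — and this is probably cleaner to write — I would prove \eqref{ineq1} directly, bypassing the quaternionic lemma, by the elementary two-vector estimate: for any two vectors $u=F_1+JF_2$ and $v=F_1-JF_2$ in a normed space where multiplication by $J$ is an isometry, one has $F_1=\tfrac12(u+v)$ and $JF_2=\tfrac12(u-v)$, hence $\|F_1\|,\|F_2\|\le\max\{|u|,|v|\}$, giving $\|F(z)\|^2=\|F_1\|^2+\|F_2\|^2\le 2\max\{|u|,|v|\}^2$, which is the left inequality. For the right inequality, $|u|^2=|F_1+JF_2|^2\le (\|F_1\|+\|JF_2\|)^2=(\|F_1\|+\|F_2\|)^2\le 2(\|F_1\|^2+\|F_2\|^2)=2\|F(z)\|^2$, and symmetrically for $|v|$. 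The only genuine thing to verify is that left multiplication by $J\in\mathbb{S}_{\mathbb{R}_3}$ is an isometry of $(\mathbb{R}_3,|\cdot|)$ for the Euclidean norm, and that $\|F(z)\|^2=\|F_1\|^2+\|F_2\|^2$; the former follows because $n$ is multiplicative and $n(J)=1$ together with the compatibility \eqref{split-formula} of the norm with the splitting (reducing again to the quaternionic fact that $|Jw|=|w|$), and the latter is the definition of the Hermitian norm on $\mathbb{R}_3\otimes\mathbb{C}$.

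\textbf{Main obstacle.} The only subtlety I anticipate is bookkeeping around \emph{which} norm on $\mathbb{R}_3$ is being used and making sure multiplication by a generic $J\in\mathbb{S}_{\mathbb{R}_3}$ (not just an imaginary paravector) is an isometry — a priori $J$ has eight real coordinates and $J^2=-1$ does not by itself force $|Jw|=|w|$ in an arbitrary algebra. Here it works because of the idempotent splitting: $J=\omega_+J_q+\omega_-J_p$ with $J_q,J_p$ genuine quaternionic imaginary units, and left multiplication decomposes as $w=\omega_+a+\omega_-b\mapsto\omega_+(J_qa)+\omega_-(J_pb)$, each block being an isometry of $\mathbb{H}$; the $\tfrac12$ weights on $\omega_\pm$ cancel. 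Once that normalization lemma is isolated and stated, the rest of the argument is the routine two-vector estimate above.
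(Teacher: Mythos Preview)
Your proposal is correct. The paper does not actually write out a proof of this lemma; it only states it with the remark that it is ``similar to \cite[Lemma~2.3]{BW1}'', so your direct two-vector argument (the ``alternative'' you give) is precisely the intended approach. Your isolation of the one genuinely new point --- that left multiplication by $J\in\mathbb{S}_{\mathbb{R}_3}$ is an isometry of $(\mathbb{R}_3,|\cdot|)$, proved via the splitting $J=\omega_+J_q+\omega_-J_p$ and the identity $|\omega_+q+\omega_-p|^2=\tfrac12(|q|^2+|p|^2)$ --- is exactly what has to be added to the quaternionic proof, and your verification of it is correct.
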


\begin{prop}
Let $D \subset D_1$ be a symmetric open subset of $ \mathbb{C}$ with corresponding axially symmetric open subsets $ \Omega_D^4 \subset \Omega_{D_1}^4$ in $ \mathcal{Q}_{\mathbb{R}_3}$. Then every slice regular function on $ \Omega_D^4$ may be approximated locally uniformly by slice regular functions on $ \Omega_{D_1}^4$ if and only if $D$ is Runge in $D_1$.
\end{prop}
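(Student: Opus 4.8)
\emph{Plan of proof.} The plan is to pass, via the stem-function description, between approximation of slice regular functions on $\Omega_D^4$ and approximation of the associated holomorphic $\mathbb R_3\otimes\mathbb C$-valued stem functions on $D$, and then to invoke the classical scalar Runge theorem componentwise; the two-sided estimate \eqref{ineq1} of Lemma \ref{imp1} is exactly what makes the two ``uniform on compact sets'' notions match up. Concretely, fix the Clifford basis $\{e_A\}$ of $\mathbb R_3$, so that $\mathbb R_3\otimes\mathbb C\cong\mathbb C^8$ as a complex vector space, and write a stem function as $F=\sum_A F^A e_A$ with $F^A\colon D\to\mathbb C$. Then, by Definition \ref{slice}, $F$ is a holomorphic stem function if and only if every $F^A$ is holomorphic and \emph{self-conjugate}, meaning $F^A(\bar z)=\overline{F^A(z)}$; moreover $F\mapsto\mathcal I(F)$ is $\mathbb R$-linear, so $\mathcal I(F)-\mathcal I(G)=\mathcal I(F-G)$. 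Two bookkeeping facts about compacta will be used: (i) if $L\subset\Omega_D^4$ is compact, then $L\subset\Omega_K^4$ for the compact \emph{symmetric} set $K:=\Phi(L)\cup\{\bar w:w\in\Phi(L)\}\subset D$, where $\Phi(x):=\tfrac{t(x)}{2}+i\sqrt{\,n(x)-(t(x)/2)^2\,}$ is continuous on $\mathcal Q_{\mathbb R_3}$ and sends $\alpha+\beta J$ to $\alpha+i|\beta|$; and (ii) conversely, if $K\subset D$ is compact, then $\Omega_K^4$ is compact, being the image of the compact set $K\times\mathbb S_{\mathbb R_3}$ (compact by Proposition \ref{split2}) under the continuous map $(\alpha+i\beta,J)\mapsto\alpha+\beta J$.

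For the implication ``$D$ Runge in $D_1$'' $\Rightarrow$ ``$\Omega_D^4$ Runge in $\Omega_{D_1}^4$'' I would argue as follows: given $f=\mathcal I(F)$, a compact $L\subset\Omega_D^4$, and $\varepsilon>0$, take $K$ as in (i), apply the classical scalar Runge theorem to each component $F^A$ to get $H^A\in\mathcal O(D_1)$ close to $F^A$ on $K$, and replace $H^A$ by its symmetrization $\tfrac12\bigl(H^A(z)+\overline{H^A(\bar z)}\bigr)$, which is still holomorphic on $D_1$, is now self-conjugate, and, since $K$ is symmetric and $F^A$ is self-conjugate, is still close to $F^A$ on $K$. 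Then $G:=\sum_A H^A e_A$ is a holomorphic stem function on $D_1$, so $g:=\mathcal I(G)\in\mathcal{SR}(\Omega_{D_1}^4)$, and for $x=\alpha+\beta J\in L\subset\Omega_K^4$ the right-hand inequality in \eqref{ineq1}, applied to $f-g=\mathcal I(F-G)$, gives $|f(x)-g(x)|\le\sqrt2\,\|(F-G)(\alpha+i\beta)\|$ with $\alpha+i\beta\in K$; since the components $F^A-H^A$ are uniformly small on $K$, this bound is $<\varepsilon$ uniformly over $L$.

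For the converse ``$\Omega_D^4$ Runge in $\Omega_{D_1}^4$'' $\Rightarrow$ ``$D$ Runge in $D_1$'' I would use that every $h\in\mathcal O(D)$ decomposes as $h=a+ib$ with $a(z):=\tfrac12\bigl(h(z)+\overline{h(\bar z)}\bigr)$ and $b(z):=\tfrac{1}{2i}\bigl(h(z)-\overline{h(\bar z)}\bigr)$ both holomorphic and self-conjugate, so it suffices to approximate a self-conjugate $h$ and recombine. For such an $h$ put $F:=h\,e_0$, a holomorphic stem function, and $f:=\mathcal I(F)\in\mathcal{SR}(\Omega_D^4)$. Given a compact $K\subset D$ and $\varepsilon>0$, apply the hypothesis on the compact set $\Omega_K^4\subset\Omega_D^4$ (fact (ii)) to obtain $g=\mathcal I(G)\in\mathcal{SR}(\Omega_{D_1}^4)$ with $\sup_{\Omega_K^4}|f-g|$ small; fixing any $J\in\mathbb S_{\mathbb R_3}$, the left-hand inequality in \eqref{ineq1} then gives, for $\alpha+i\beta\in K$, $|h(\alpha+i\beta)-G^{e_0}(\alpha+i\beta)|\le\|(F-G)(\alpha+i\beta)\|\le\sqrt2\,\sup_{\Omega_K^4}|f-g|<\varepsilon$, so the holomorphic function $G^{e_0}\in\mathcal O(D_1)$ approximates $h$ on $K$. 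Since ``$D\subset D_1$ is a Runge pair'' in condition 1) of Theorem \ref{main} is precisely the approximation property established here, the two implications yield the equivalence $1)\Longleftrightarrow 2)$.

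The one genuinely substantive ingredient is the two-sided estimate \eqref{ineq1} of Lemma \ref{imp1}, which is available to us; the rest is routine but must be carried out with some care: the bookkeeping of facts (i)--(ii) matching compacta of $D$ with compacta of $\Omega_D^4$, the remark that symmetrizing an approximant over a symmetric compactum does not worsen it, and the elementary splitting of an arbitrary holomorphic function into self-conjugate pieces. I do not anticipate any deeper obstacle.
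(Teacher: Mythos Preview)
Your proposal is correct and follows essentially the same approach as the paper: both arguments rest on Lemma \ref{imp1} to show that the bijection $f\leftrightarrow F$ between slice (regular) functions on $\Omega_D^4$ and (holomorphic) stem functions on $D$ is a homeomorphism for the topologies of locally uniform convergence, together with the observation that compact symmetric $C\subset D$ correspond to compact $\Omega_C^4\subset\Omega_D^4$. The paper states this tersely as an isomorphism of topological function spaces and leaves the passage from scalar Runge to $\mathbb R_3\otimes\mathbb C$-valued stem-function Runge implicit, whereas you spell out the componentwise approximation, the symmetrization of approximants, and the self-conjugate splitting of an arbitrary holomorphic function --- all routine details the paper suppresses.
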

\begin{proof}
Firstly, we notice that for any symmetric subset $C \subset D$ the subset
$$ \Omega_C^4= \{\alpha+\beta J: \exists \, \,\alpha+\beta i\in C, \, J \in \mathbb{S}_{\mathbb{R}_3} \}$$
of $ \mathcal{Q}_{\mathbb{R}_3}$ is compact if and only if $C$ is compact. We measure the size of a function by using the sup-norm. From the euclidean scalar product on $ \mathbb{C} \simeq \mathbb{R}^2$ and on $ \mathbb{R}_3 \simeq \mathbb{R}^8$ we deduce a scalar product on $ \mathbb{R}_3 \otimes \mathbb{C} \simeq \mathbb{R}^{16}$. We call the norm induced by this scalar product by $ \|.\|$. From the inequalities \eqref{ineq1} we derive that
$$ \frac{1}{\sqrt{2}} \| F \|_C \leq \| f \|_{\Omega_C^4} \leq \sqrt{2} \| F \|_C,$$
for any compact symmetric subset $C$ of $D$. We denote $ \| F \|_C= \sup_{z \in C}\|F(z)\|$. We endow the space of slice functions on $ \Omega_D^4$ and the space of stem functions on $D$ with the topology of locally uniform convergence. Thus, by the previous inequalities we get that the two spaces are isomorphic. This implies the thesis.
\end{proof}

Now, we prove $ 5) \Longleftrightarrow 6)$ of Theorem \ref{main}.
\\This implication is trivial and it is similar to \cite[Thm. 1.3]{BW1}. For the sake of completeness we show the proof. Each bounded connected component $C$ of $D$, respectively $D_1$, corresponds to a bounded connected component $ \Omega_C^4$ of $\Omega_D^4$, respectively $\Omega_{D_1}^4$, through
$$ \Omega_C^4= \{ \alpha+\beta I; \, \alpha, \beta \in \mathbb{R}, \, \alpha+ \beta i \in C, \, I \in \mathbb{S}_{\mathbb{R}_3} \}.$$
In order to show the implication $ 6) \Longrightarrow 2)$ of Theorem \ref{main} we need a preliminary result. Let us start with the followings
\begin{Def}
Let $ D \subset \mathbb{C}$. Let $ \Omega_D^4 \subset \mathcal{Q}_{\mathbb{R}_3}$ be axially symmetric and let $f$ be a slice function. It is called intrinsic if 
$$f(\alpha-I \beta)= \overline{f( \alpha+I \beta)}, \qquad \, \forall \, \alpha+I \beta \in \Omega_D^4 \cap \mathbb{C}_I.$$
\end{Def}
\begin{Def}
Given two polynomials $ a= \mathcal{I}(A)$ and $b=\mathcal{I}(B)$ such that $A^cA$ is real and not identically zero, we call (left) rational a function of the form $a^{-1}b:= \mathcal{I}\left((A^cA)^{-1}A^c B \right).$
\end{Def}
\begin{nb}
The function $A^cA$ is said to be real if its components are real-valued.
\end{nb}

Rational functions admit the following characterization:
\begin{prop}
\label{nb1}
A slice function $f:\Omega_D^4 \to \mathbb{R}_3$ is rational if and only if for any $I \in \mathbb{S}_{\mathbb{R}_3}$ and any choice of $I_2$, $I_3$, completion of a basis of $ \mathbb{R}_3$, satisfying the defining relations, $I_rI_s+I_sI_r=-2 \delta_{rs}$. Then there exist eight rational intrinsic functions $R_A$ such that
\begin{equation}
f_{I}(u+Iv)= \sum_{|A|=0}^3 R_A(\alpha+I \beta)I_A, \quad I_A=I_{i_1}...I_{i_s} \qquad \forall \,  \alpha+I\beta \in \Omega_D^4 \cap \mathbb{C}_I,
\end{equation}
where $A=i_{1}...i_{s}$ is a subset of $ \{1,2,3\}$, with $i_1<...<i_s$ or, when $|A|=0$, $I_{\emptyset}=1$.
\end{prop}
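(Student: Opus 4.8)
The plan is to reduce the characterization of rational slice functions to the componentwise description of a slice function via a fixed basis of $\mathbb{R}_3$, and then to identify ``rational'' with ``each component function is rational intrinsic.'' First I would fix $I\in\mathbb{S}_{\mathbb{R}_3}$ together with a completion $I_2,I_3$ satisfying $I_rI_s+I_sI_r=-2\delta_{rs}$, so that the $I_A$ for $A\subseteq\{1,2,3\}$ form an $\mathbb{R}$-basis of $\mathbb{R}_3$ (here $I=I_1$). Since any slice function $f=\mathcal{I}(F)$ is determined by its stem function $F=F_1+iF_2$ with $F_1,F_2\colon D\to\mathbb{R}_3$, expanding $F_1$ and $F_2$ in this basis, $F_j=\sum_{|A|\le 3}F_{j,A}I_A$ with $F_{j,A}$ real-valued, yields real stem functions $R_A:=\mathcal{I}(F_{1,A}+iF_{2,A})$ whose induced slice functions are intrinsic by the reality condition $F_{j,A}(\bar z)=(-1)^{j-1}F_{j,A}(z)$, and one gets the pointwise identity $f_I(\alpha+I\beta)=\sum_{|A|\le 3}R_A(\alpha+I\beta)I_A$. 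This decomposition $f\mapsto(R_A)_A$ is a bijection between slice functions on $\Omega_D^4$ and $8$-tuples of intrinsic slice functions, and it is compatible with the slice product in the sense that $\mathcal{I}(A^cA)$ being real (and not identically zero) is exactly the condition that makes the formula $a^{-1}b=\mathcal{I}((A^cA)^{-1}A^cB)$ meaningful.

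Next I would prove the two implications. For the ``only if'' direction, suppose $f=a^{-1}b$ with $a=\mathcal{I}(A)$, $b=\mathcal{I}(B)$ polynomials and $A^cA$ real; write $A=\sum_A A_AI_A$, $B=\sum_A B_AI_A$ with $A_A,B_A$ real polynomials in $\alpha+i\beta$ (using the complexified variable $z$). Then $A^cB$ has polynomial components and $(A^cA)^{-1}$ is a real rational function, so each component of $(A^cA)^{-1}A^cB$ in the fixed basis is a rational function with real coefficients, hence defines a rational intrinsic slice function $R_A$; the representation formula for slice functions then gives the displayed expansion of $f_I$. For the ``if'' direction, assume we are given, for one such choice of basis, rational intrinsic functions $R_A$ with $f_I(\alpha+I\beta)=\sum_{|A|\le 3}R_A(\alpha+I\beta)I_A$. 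Writing $R_A=p_A/q_A$ with $p_A,q_A$ real polynomials and $q_A\ne 0$, clear denominators: set $q:=\prod_A q_A$ (a nonzero real polynomial), so $qf_I=\sum_A(q R_A)I_A$ has polynomial components; hence $q f=\mathcal{I}(Q)$ for a stem function $Q$ with polynomial components, i.e. $b:=qf$ is a polynomial slice function $\mathcal{I}(B)$. Taking $a:=\mathcal{I}(A)$ with $A=q$ (a real, hence central, scalar polynomial), we have $A^cA=q^2$ real and not identically zero, and $a^{-1}b=\mathcal{I}((A^cA)^{-1}A^cB)=\mathcal{I}(q^{-2}\cdot q\cdot B)=\mathcal{I}(q^{-1}B)=f$, so $f$ is rational. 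Finally I would check that the conclusion is independent of the chosen basis $I_2,I_3$: a change of basis acts by an element of the appropriate orthogonal group with real entries, which transforms the tuple $(R_A)$ linearly with real-rational coefficients, preserving the property of each $R_A$ being rational and intrinsic.

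The main obstacle I anticipate is bookkeeping rather than conceptual: one must verify carefully that the notion of ``rational'' in the Definition (built from the slice product and the operation $A\mapsto A^cA$) really matches ``each basis-component is a rational function,'' in particular that the slice product of polynomial slice functions has polynomial stem function and that $A^cA$ being real lets one invert it componentwise over $\mathbb{R}$; the splitting $\mathbb{R}_3\simeq\mathbb{R}_2\oplus\mathbb{R}_2$ and the quaternionic analogue can be invoked to control the product, but the degree-by-degree matching of the $I_A$-components, and the observation that a real scalar polynomial $q$ is central and satisfies $q^c=q$, are the points that require care. Once the componentwise dictionary is established, both implications are immediate, and the basis-independence follows formally.
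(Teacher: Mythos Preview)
Your proposal is correct and follows essentially the same route as the paper: the paper's own proof is just a pointer to \cite[Prop.~3.7]{CSS} together with the refined splitting lemma \cite[Lemma~2.4]{GP3}, and what you have written is precisely an unpacking of that argument---decompose the stem function in the basis $\{I_A\}$ (the refined splitting lemma), observe that $(A^cA)^{-1}$ is a scalar real rational function so each component of $(A^cA)^{-1}A^cB$ is rational intrinsic, and conversely clear denominators by a real polynomial $q$ to exhibit $f$ as $a^{-1}b$ with $A=q$ central and $A^cA=q^2$ real. Your added remark on basis-independence is a harmless extra not present in the paper.
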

\begin{proof}
It is based on computations similar to \cite[Prop. 3.7]{CSS} and on the refined splitting lemma, see \cite[Lemma 2.4]{GP3}, \cite{CGS}.
\end{proof}
Due to the fact that the function $A^cA$ is real and not identically zero we have the following
\begin{Cor}
The singularities of a rational function are all poles.
\end{Cor}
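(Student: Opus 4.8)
The plan is to reduce the statement to the elementary one-variable fact that a non-zero polynomial has finitely many zeros, each of finite order, and then to transport it to the quadratic cone through the operator $\mathcal{I}$. Write the rational function as $f=a^{-1}b=\mathcal{I}(G)$ with $G=(A^cA)^{-1}A^cB$, where $a=\mathcal{I}(A)$, $b=\mathcal{I}(B)$ and $A,B$ are the corresponding polynomial stem functions. Since $a$ and $b$ are polynomials, $A^cB$ is an entire $(\mathbb{R}_3\otimes\mathbb{C})$-valued polynomial in $z$; by hypothesis $N:=A^cA$ is real and not identically zero, hence (being also intrinsic, as the stem function of the normal function $a^ca$) a non-zero polynomial in $z$ with real coefficients, whose zero set $Z=\{z\in\mathbb{C}\colon N(z)=0\}$ is therefore finite and invariant under complex conjugation. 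Consequently the singular set of $f$ inside $\mathcal{Q}_{\mathbb{R}_3}$ is contained in $\Omega_Z^4=\bigcup_{z_0\in Z}[z_0]$, a finite union of ``spheres'' $[z_0]=\{\alpha_0+\beta_0 J\colon J\in\mathbb{S}_{\mathbb{R}_3}\}$, where $z_0=\alpha_0+i\beta_0$ and the sphere degenerates to the real point $\{\alpha_0\}$ when $\beta_0=0$.

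Next I would localize at one such point. Let $\delta_{z_0}$ be the characteristic polynomial of $[z_0]$, namely $\delta_{z_0}(z)=z-\alpha_0$ when $\beta_0=0$ and $\delta_{z_0}(z)=(z-z_0)(z-\bar z_0)=(z-\alpha_0)^2+\beta_0^2$ otherwise; a short computation shows that the intrinsic slice polynomial $\Delta_{z_0}:=\mathcal{I}(\delta_{z_0})$ vanishes exactly on $[z_0]$. Let $m$ be the order of $z_0$ as a zero of $N$, which by the conjugation symmetry of $Z$ coincides with that of $\bar z_0$; then $N=\delta_{z_0}^{\,m}\,\tilde N$ for a polynomial $\tilde N$ not vanishing on some open conjugation-symmetric neighborhood $U\subseteq\mathbb{C}$ of $\{z_0,\bar z_0\}$. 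Since $\delta_{z_0}$ is scalar, hence central in $\mathbb{R}_3\otimes\mathbb{C}$, on $\Omega_U^4\setminus[z_0]$ we have
\[
\Delta_{z_0}^{\,m}\,f=\mathcal{I}\bigl(\delta_{z_0}^{\,m}\,G\bigr)=\mathcal{I}\bigl(\tilde N^{-1}A^cB\bigr),
\]
and since $\tilde N^{-1}A^cB$ is holomorphic on all of $U$, it follows that $\Delta_{z_0}^{\,m}\,f$ extends to a slice regular function on the axially symmetric open neighborhood $\Omega_U^4$ of $[z_0]$. By the definition of a pole along a sphere (respectively along a real point) this says precisely that $f$ has a pole of order at most $m$ along $[z_0]$, and since $Z$ is finite, all singularities of $f$ are poles. (Alternatively one may invoke Proposition \ref{nb1}: on each slice $\mathbb{C}_I$ one has $f_I(\alpha+I\beta)=\sum_{|A|=0}^{3}R_A(\alpha+I\beta)I_A$, a finite sum of one-variable intrinsic rational functions, whose only singularities are poles and which lie over the finite, conjugation-symmetric set of zeros of the denominators.)

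The routine parts are the factorization $N=\delta_{z_0}^{\,m}\,\tilde N$ and the verification that $\Delta_{z_0}$ cuts out $[z_0]$. I expect the only point requiring care to be the bookkeeping that converts the scalar factorization at the level of stem functions into the slice-product identity $\Delta_{z_0}^{\,m}\,f=\mathcal{I}(\delta_{z_0}^{\,m}\,G)$ --- it is here that the hypothesis that $A^cA$ is real is used, since it is what makes $\delta_{z_0}$ central --- together with making the notion of ``pole along a sphere'' precise in $\mathcal{Q}_{\mathbb{R}_3}$, for which one relies on the refined splitting lemma and the references \cite{GP3,CGS} quoted above.
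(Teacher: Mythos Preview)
Your argument is correct and rests on exactly the same observation as the paper: the denominator $A^cA$ is a real, not identically zero polynomial, so its zero set is a finite conjugation-symmetric subset of $\mathbb{C}$ and each zero has finite order, whence every singularity of $f$ lies on finitely many spheres $[z_0]$ and is removed by multiplying by a suitable power of the characteristic polynomial $\Delta_{z_0}$. The paper's own justification is simply the one-sentence remark preceding the Corollary (``Due to the fact that the function $A^cA$ is real and not identically zero\ldots''); your write-up is a careful unpacking of that remark, including the localization $N=\delta_{z_0}^{\,m}\tilde N$ and the centrality of $\delta_{z_0}$, and your parenthetical alternative via Proposition~\ref{nb1} is also in the spirit of the paper. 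There is no genuine difference in approach, only in level of detail.
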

Now, we are ready to show the implication $ 6) \Longrightarrow 2)$ of Theorem \ref{main}, which is not necessary but could be of interest in some other contexts.
In order to do this we have to endow the quadratic cone $ \mathcal{Q}_{\mathbb{R}_3}$ with a topology induced by a product topology. For this purpose we use the fact that we can split $\mathbb{S}_{\mathbb{R}_3}$ as direct sum of 2-spheres (see Proposition \ref{split2}) and by Proposition \ref{one} we get
$$ \mathcal{Q}_{\mathbb{R}_3}=\bigcup_{J \in \mathbb{S}_{\mathbb{R}_3}} \mathbb{C}_{J}=\bigcup_{J \in \mathbb{S}_{\mathbb{R}_2} \times \mathbb{S}_{\mathbb{R}_2}} \mathbb{C}_{J}.$$
This allows us to endow the quadratic cone $ \mathcal{Q}_{\mathbb{R}_3}$ with the topology induced by the product topology of $\mathbb{C} \times \mathbb{R}^6$. It is possible to justify this choice since we are working with complex planes in which the imaginary unit varies in a product of spheres which are subset of $ \mathbb{R}_3$, respectively.
By the symbol $\overline{\mathcal{Q}_{\mathbb{R}_3}}$ we will denote $\mathcal{Q}_{\mathbb{R}_3} \cup \{ \infty \}$. 
\begin{theorem}
\label{th1}
Let $D \subset \mathbb{C}$. Let $ \Omega_D^4$ be an axially symmetric open set in $\mathcal{Q}_{\mathbb{R}_3}$, let $A$ be a set having a point in each connected component of $ \overline{\mathcal{Q}_{\mathbb{R}_3}} \setminus \Omega_D^4$ and let $f \in \mathcal{SR}(\Omega_D^4)$. Then $f$ can be approximated by a sequence of rational functions $ \{ r_n\}$ having their poles in A uniformly on every compact set in $ \Omega_D^4$. If $ \overline{\mathcal{Q}_{\mathbb{R}_3}} \setminus \Omega_D^4$ is a connected set, then we can set $A= \{ \infty \}$ and $f$ can be approximated by polynomials uniformly on every compact set in $ \Omega_D^4$.
\end{theorem}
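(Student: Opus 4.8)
The plan is to reduce the assertion to the classical Runge theorem in one complex variable by passing to stem functions. Write $f=\mathcal{I}(F)$ with $F\colon D\to\mathbb{R}_3\otimes\mathbb{C}$ holomorphic, and expand $F=\sum_{|A|=0}^{3}F_A\,e_A$ in its $8$ complex scalar components $F_A\colon D\to\mathbb{C}$ (with $e_\emptyset=1$). Each $F_A$ is holomorphic, and since $F$ is a stem function and $D$ is symmetric, $F_A(\bar z)=\overline{F_A(z)}$ whenever $z,\bar z\in D$; conversely, any such family reassembles to a holomorphic stem function. The operator $\mathcal{I}$ transports locally uniform convergence of stem functions to locally uniform convergence of slice functions: by \eqref{ineq1} one has $\|f-\mathcal{I}(G)\|_{\Omega_C^4}\le\sqrt2\,\|F-G\|_C$ for every compact symmetric $C\subset D$, and every compact $L\subset\Omega_D^4$ lies in some $\Omega_C^4$ (take $C$ to be the symmetrization of the image of $L$ under $\alpha+\beta J\mapsto\alpha+i\beta$). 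Hence it suffices to approximate $F$ locally uniformly on $D$ by stem functions whose components are rational (resp. polynomial) with a controlled pole set. (One could instead reduce to the quaternionic Runge theorem via Proposition \ref{np}, but that route requires extra care to match the denominators across the two factors of $\mathbb{R}_3\cong\mathbb{R}_2\oplus\mathbb{R}_2$.)

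Next I would set up the correspondence between $A$ and the complex picture. Using Proposition \ref{one} every $a\in A\cap\mathcal{Q}_{\mathbb{R}_3}$ can be written as $a=\alpha_a+\beta_a J_a$ with $\alpha_a,\beta_a\in\mathbb{R}$, $J_a\in\mathbb{S}_{\mathbb{R}_3}$; put $\zeta_a=\alpha_a+i\beta_a\in\mathbb{C}$, and $\zeta_a=\infty$ if $a=\infty$. Since $\mathcal{Q}_{\mathbb{R}_3}\setminus\Omega_D^4=\Omega_{\mathbb{C}\setminus D}^4$, the same description as in the proof of $5)\Leftrightarrow 6)$ shows that the connected components of $\overline{\mathcal{Q}_{\mathbb{R}_3}}\setminus\Omega_D^4$ are exactly the sets $\Omega_C^4$, with $C$ running over the complex-conjugation orbits of the connected components of $\overline{\mathbb{C}}\setminus D$; the sphere $\Sigma(\zeta_a):=\Omega_{\{\zeta_a,\bar\zeta_a\}}^4=\{\alpha_a+\beta_a J:J\in\mathbb{S}_{\mathbb{R}_3}\}$ is then connected, lies in the component of $a$, and contains $a$. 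Consequently the set $B:=\{\zeta_a:a\in A\}\cup\{\bar\zeta_a:a\in A\}$ meets every connected component of $\overline{\mathbb{C}}\setminus D$.

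Now apply the classical complex Runge theorem (see \cite{RE}) on $D$ with admissible poles in $B$: each $F_A$ is approximated, uniformly on compact subsets of $D$, by rational functions with poles in $B$; replacing each such approximant $G$ by $\frac12\bigl(G(z)+\overline{G(\bar z)}\bigr)$ — which does not move the poles ($B$ being conjugation-invariant) and does not change the limit ($F_A$ already satisfying $F_A(\bar z)=\overline{F_A(z)}$) — we may assume the approximants satisfy the same reflection identity. Reassembling components yields holomorphic rational stem functions $G_n\to F$ locally uniformly on $D$, with poles in $B$. Set $r_n:=\mathcal{I}(G_n)$. By Proposition \ref{nb1} each $r_n$ is a rational slice function, and by the Corollary following it its singularities are poles; explicitly the singular set of $r_n$ is a finite union of spheres $\Sigma(\zeta_a)$, each containing the corresponding point $a\in A$, so $r_n$ has its poles in $A$. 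The estimate of the first paragraph gives $\|f-r_n\|_{\Omega_C^4}\le\sqrt2\,\|F-G_n\|_C\to 0$, so $r_n\to f$ uniformly on every compact subset of $\Omega_D^4$; exhausting $\Omega_D^4$ by an increasing sequence of compacts and diagonalizing produces a single such sequence. Finally, if $\overline{\mathcal{Q}_{\mathbb{R}_3}}\setminus\Omega_D^4$ is connected, then by the component correspondence $\overline{\mathbb{C}}\setminus D$ consists of a single conjugation-orbit of components; since $\infty\in\overline{\mathbb{C}}\setminus D$ lies in a conjugation-invariant component, that orbit is a single component, so $\overline{\mathbb{C}}\setminus D$ is connected. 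The classical Runge theorem then yields polynomial approximants of the $F_A$, hence polynomial slice approximants of $f$, and one may take $A=\{\infty\}$.

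The analytic engine — the stem-function dictionary together with the two-sided estimate \eqref{ineq1} — is essentially routine once the preliminaries are in place. The delicate part is the topological bookkeeping of the second paragraph: establishing that the connected components of the two complements match up across the symmetrization, and that the chosen complex points $\zeta_a$ are precisely the ones whose associated singular spheres $\Sigma(\zeta_a)$ pass through the prescribed points $a$. A secondary technical point, handled by Proposition \ref{nb1} and its Corollary, is to verify that the functions $\mathcal{I}(G_n)$ produced this way are genuinely rational (and, in the last case, polynomial) in the precise sense of the definitions above.
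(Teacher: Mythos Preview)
Your argument is correct and shares the paper's core strategy—reduce to the classical complex Runge theorem by decomposing into eight scalar holomorphic components and reassembling—but the technical implementation differs. The paper fixes a slice $\mathbb{C}_I$, applies the refined splitting lemma to write $f_I=\sum_{|A|=0}^3 F_A I_A$ with intrinsic holomorphic $F_A$, approximates each $F_A$ by rational intrinsic functions on that slice, and then invokes the product topology $\mathbb{C}\times\mathbb{R}^6$ on $\mathcal{Q}_{\mathbb{R}_3}$ together with the Representation Formula (Proposition~\ref{Rap1}) to propagate the estimate to all slices. You instead stay on the stem domain $D$, decompose the stem function $F=\sum F_A e_A$, apply Runge there, symmetrize, and transport convergence to $\Omega_D^4$ in one stroke via Lemma~\ref{imp1}. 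Your route is slightly more streamlined (one estimate instead of two, no auxiliary product topology) and makes the pole bookkeeping $A\leftrightarrow B$ explicit, which the paper leaves implicit by writing ``poles in $A\cap\mathbb{C}_I$''; the paper's route follows the template of \cite[Thm.~3.12]{CSS} more closely and makes direct use of Proposition~\ref{nb1} in the basis $I_A$. One small point: your appeal to Proposition~\ref{nb1} needs a change-of-basis remark, since your components are taken with respect to $e_A$ rather than the slice-adapted basis $I_A$; alternatively you can bypass Proposition~\ref{nb1} entirely by noting that after clearing denominators $G_n=Q^{-1}P$ with $Q\in\mathbb{R}[z]$, so $r_n=\mathcal{I}(G_n)$ is rational by definition with $A^cA=Q^2$.
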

\begin{proof}
We follow the same ideas of \cite[Thm. 3.12]{CSS}. The following estimates are to be understood in a compact set of $ \mathcal{Q}_{\mathbb{R}_3}$. It is possible to extend them for an open set through an exhaustion by compact sets, as made in \cite[Thm. 3.14]{CSS}.
\\Since the quadratic cone is endowed with the topology induced by the product topology of $\mathbb{C} \times \mathbb{R}^6$ we have to prove that for any $ \varepsilon >0$
\begin{equation}
\label{i1}
\| f(\alpha+I \beta)-r_n(\alpha+I \beta) \|_{\mathbb{C}}< \varepsilon,
\end{equation}
and
\begin{equation}
\label{i2}
\| f(\alpha+I \beta)-r_n( \alpha+I \beta) \|_{\mathbb{R}^6}< \varepsilon \qquad \forall \, \alpha+I \beta \in \Omega_D^4 \cap \mathbb{C}_I.
\end{equation}
By the refined splitting lemma \cite[Lemma 2.4]{GP3} we have that there exist eight holomorphic intrinsic functions $F_A: \Omega_D^4 \cap \mathbb{C}_I \to \mathbb{C}_I$ such that for every $z= \alpha+I \beta$
$$ f_I(z)= \sum_{|A|=0}^3 F_{A}(z)I_A.$$ 
By the Proposition \ref{nb1} and the classical Runge's theorem \cite[Thm. 13.9]{RU} we can find eight rational intrinsic functions $R_A( \alpha+I \beta)$ with poles in $A \cap \mathbb{C}_I$ such that
\begin{equation}
\label{nb2}
\| F_A(\alpha+I \beta)-R_{A}( \alpha+I \beta) \|_{\mathbb{C}} < \frac{\varepsilon}{8} \qquad  \forall \, \alpha+I \beta \in \Omega_D^4 \cap \mathbb{C}_I.
\end{equation}
Then 
$$ \| f(\alpha+I \beta)-r_n(\alpha+I \beta) \|_{\mathbb{C}}< \varepsilon.$$
In order to prove \eqref{i2} we use the Representation Formula (Proposition \ref{Rap1})
\begin{eqnarray*}
f(\alpha+I \beta)-r_n(\alpha+I \beta) &=& \frac{1}{2} \biggl[ \bigl( f(\alpha+J \beta)+f(\alpha -J \beta) \bigl)+ IJ \bigl( f(\alpha-J \beta)-f(\alpha +J \beta) \bigl)+\\
&& - \bigl( r_n(\alpha+J \beta)+r_n(\alpha -J \beta) \bigl)+  IJ \bigl( r_n(\alpha-J \beta)-r_n(\alpha +J \beta) \bigl) \biggl].
\end{eqnarray*}
Using the previous splittings we get
\begin{eqnarray*}
f(\alpha+I \beta)-r_n(\alpha+I \beta) &=& \frac{1}{2} \biggl[ \biggl( \sum_{|A|=0}^3 F_{A}( \alpha+J \beta)I_A+F_{A}( \alpha-J \beta)I_A+\\
&& +IJ\sum_{|A|=0}^3 F_{A}( \alpha-J \beta)I_A-F_{A}( \alpha+J \beta)I_A \biggl)+\\
&& - \biggl( \sum_{|A|=0}^3 R_{A}( \alpha+J \beta)I_A+R_{A}( \alpha-J \beta)I_A+\\
&& +IJ\sum_{|A|=0}^3 R_{A}( \alpha-J \beta)I_A-R_{A}( \alpha+J \beta)I_A \biggl) \biggl].
\end{eqnarray*} 
Finally by \eqref{nb2} we get
\begin{eqnarray*}
\| f(\alpha+I \beta)-r_n( \alpha+I \beta) \|_{\mathbb{R}^6} &\leq & \frac{1}{2} \biggl( \sum_{|A|=0}^3\| F_A(\alpha+J \beta)-R_A(\alpha+J \beta) \|_{\mathbb{C}}+\\
&& + \sum_{|A|=0}^3\| F_A(\alpha-J \beta)-R_A(\alpha-J \beta) \|_{\mathbb{C}}+\\
&& +\sum_{|A|=0}^3\| F_A(\alpha+J \beta)-R_A(\alpha+J \beta) \|_{\mathbb{C}}+\\
&& +\sum_{|A|=0}^3\| F_A(\alpha-J \beta)-R_A(\alpha-J \beta) \|_{\mathbb{C}} \biggl)\\
& <& \! \! \! \! \varepsilon.
\end{eqnarray*}
Therefore we get \eqref{i2}.
\end{proof}
\begin{nb}
In this setting in the numerator of a rational function it is possible to find the zero divisors. However, this is not possible at the denominator because it is real by definition of rational function. Nevertheless, this is not a problem because the poles in A of Theorem \ref{th1} remain spheres or real points. The only thing that can change is the multiplicity of at most two points on the sphere (\cite{GPS}).
\end{nb}
From the previous theorem follows the implication $ 6) \Longrightarrow 2)$ of Theorem \ref{main}.
\\In the rest of the paper we will show many auxiliary results, which will help us to prove the last implication $ 3) \Longleftrightarrow 4)$ of Theorem \ref{main} (see Proposition \ref{res7}).
\\We introduce some notations which will be used for the next results. Thus, we define
$$ D^{+}= \{z \in D: \hbox{Im}(z) \geq 0\},$$
$$ D^{-}= \{z \in D: \hbox{Im}(z) \leq 0\},$$
$$ D_{\mathbb{R}}=D \cap \mathbb{R},$$
$$ D^{*}=D^{+} \setminus \mathbb{R}.$$
Let $ \partial D$ be the boundary of $D$. We define the real positive continuous function $h$ on $D_{\mathbb{R}}$ in the following way
$$ h(x)= \hbox{dist}(x, \partial D)= \inf_{z \in \partial D}|z-x|.$$
Moreover, we can set
$$ W= \{x+yi \in \mathbb{C}: x \in D_{\mathbb{R}}: 0 \leq y < h(x) \},$$
$$ W^{*}=W \setminus D_{\mathbb{R}}.$$
We observe that
$$ W= \{x+rh(x)i: \, x \in D_{\mathbb{R}}, \, r \in[0,1) \},$$
$$ W^*= \{x+rh(x)i: \, x \in D_{\mathbb{R}}, \, r \in (0,1) \},$$
$$ D_{\mathbb{R}}= \{x+rh(x)i: \, x \in D_{\mathbb{R}}, \, r=0 \}.$$
In order to obtain a precise description of the homology of $ \Omega_D^4$ in terms of the topology of $D$ (see Proposition \ref{desc}) it is crucial the following result.
\begin{prop}
\label{split3}
$$ H_{k}(\mathbb{S}_{\mathbb{R}_3})= \begin{cases}
\mathbb{Z} \qquad \qquad  k=0,4\\
\mathbb{Z} \oplus \mathbb{Z} \qquad k=2\\
0 \qquad \qquad k=1, 3, \, k \geq 5
\end{cases}
$$
\end{prop}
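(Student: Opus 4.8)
The plan is to reduce the statement to the Künneth theorem. By Proposition \ref{split2} the assignment $x=\omega_+q+\omega_-p\mapsto(q,p)$ is the restriction to $\mathbb{S}_{\mathbb{R}_3}$ of the linear isomorphism $\mathbb{R}_3\simeq\mathbb{R}_2\oplus\mathbb{R}_2$ of \eqref{ssplit}; since this isomorphism is in particular a homeomorphism, it identifies $\mathbb{S}_{\mathbb{R}_3}$ with $\mathbb{S}_{\mathbb{R}_2}\times\mathbb{S}_{\mathbb{R}_2}$. As $\mathbb{S}_{\mathbb{R}_2}=\{q\in\mathbb{R}_2: q^2=-1\}$ is the standard $2$-sphere $S^2$, we obtain $\mathbb{S}_{\mathbb{R}_3}\cong S^2\times S^2$, and it remains to compute $H_k(S^2\times S^2)$.

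Recall that $H_k(S^2)=\mathbb{Z}$ for $k=0,2$ and $H_k(S^2)=0$ otherwise. Since these groups are free abelian, the Künneth formula carries no $\mathrm{Tor}$ contribution and reads
$$
H_n(S^2\times S^2)\;\cong\;\bigoplus_{i+j=n} H_i(S^2)\otimes_{\mathbb{Z}} H_j(S^2).
$$
Evaluating term by term: for $n=0$ one gets $H_0\otimes H_0=\mathbb{Z}$; for $n=1$ every summand contains a factor $H_1(S^2)=0$, so $H_1=0$; for $n=2$ the nonzero contributions are $H_0\otimes H_2$ and $H_2\otimes H_0$, giving $\mathbb{Z}\oplus\mathbb{Z}$; for $n=3$ every summand contains a vanishing factor, so $H_3=0$; for $n=4$ only $H_2\otimes H_2=\mathbb{Z}$ survives; and for $n\ge 5$ each summand involves some $H_j(S^2)$ with $j\ge 3$, hence vanishes. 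This is precisely the asserted table.

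The only point that needs a word of justification is that the algebraic splitting \eqref{ssplit} is a topological isomorphism, so that it descends to a homeomorphism between the loci $\{x^2=-1\}$ on the two sides; beyond that the argument is the standard computation of $H_*(S^2\times S^2)$, and there is no genuine obstacle. (Alternatively, one may bypass Künneth entirely: $S^2\times S^2$ admits a CW structure with a single $0$-cell, two $2$-cells, a single $4$-cell and no cells in odd dimension, so the cellular chain complex has all differentials zero and the homology is read off directly.)
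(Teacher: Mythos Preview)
Your proof is correct and follows essentially the same approach as the paper: both invoke Proposition~\ref{split2} to identify $\mathbb{S}_{\mathbb{R}_3}\cong S^2\times S^2$ and then apply the K\"unneth formula, noting that the torsion terms vanish. Your version is simply more detailed, spelling out the term-by-term computation and remarking that the algebraic splitting is a homeomorphism, and it adds the optional CW-structure argument as an alternative.
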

\begin{proof}
By Proposition \ref{split2} we know that $ \mathbb{S}_{\mathbb{R}_3}= \mathbb{S}_{\mathbb{R}_2} \times \mathbb{S}_{\mathbb{R}_2}$, so the formula follows by using the well known homology of a 2-sphere and the K\"unneth formula, which is without the torsion part since $\mathbb{Z}$ is a flat module.
\end{proof}
\begin{prop}
\label{desc}
Let $D$ be a symmetric open subset of $ \mathbb{C}$. We assume that the axially symmetric set $ \Omega_D^4$ is connected. Recalling the reduced homology $\widetilde{H}_0$ we have
\begin{equation}
\label{sec1}
0 \to H_1(D^+) \to H_5(\Omega_D^4) \to \widetilde{H}_0(D_{\mathbb{R}}) \to 0,
\end{equation}

\begin{equation}
\label{hom1}
H_{4}(\Omega_D^4)= \begin{cases}
0 \qquad D_{\mathbb{R}} \neq \emptyset \\
\mathbb{Z} \qquad D_{\mathbb{R}}= \emptyset,
\end{cases}
\end{equation}

\begin{equation}
\label{sec2}
0 \to H_{1}(D^+) \oplus H_1(D^+) \to H_3(\Omega_D^4) \to \widetilde{H}_0(D_{\mathbb{R}}) \oplus \widetilde{H}_0(D_{\mathbb{R}}) \to 0,
\end{equation}

\begin{equation}
\label{hom2}
 H_{2}(\Omega_D^4)= \begin{cases}
0 \qquad \qquad D_{\mathbb{R}} \neq \emptyset \\
\mathbb{Z} \oplus \mathbb{Z} \qquad D_{\mathbb{R}}= \emptyset,
\end{cases}
\end{equation}

\begin{equation}
\label{sec3}
0 \to H_1(D^+) \to H_1(\Omega_D^4) \to 0.
\end{equation}
\end{prop}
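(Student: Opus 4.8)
The plan is to realise $\Omega_D^4$ as the union of two conveniently chosen relatively open subsets, to compute their homology groups and that of their intersection by the Künneth formula together with Propositions \ref{split2} and \ref{split3}, and then to read off the five assertions from the Mayer--Vietoris sequence of this cover.

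First I would fix the geometric picture. Since $J^2=-1$, one has $\alpha+\beta J\in\mathbb{R}$ if and only if $\beta=0$, so $V:=\Omega_D^4\setminus\mathbb{R}$ is relatively open in $\Omega_D^4$; normalising $\beta>0$ by the symmetry of $D$, the assignments $x\mapsto t(x)/2$, $x\mapsto\sqrt{n(x)-t(x)^2/4}$ and $x\mapsto (x-t(x)/2)/\sqrt{n(x)-t(x)^2/4}$ give a homeomorphism $V\cong D^{*}\times\mathbb{S}_{\mathbb{R}_3}$. Symmetrising the collar $W$ of $D_{\mathbb{R}}$ to an open subset of $D$ and setting $U:=\Omega_W^4$, Proposition \ref{one} makes $U$ relatively open with $U\cup V=\Omega_D^4$, and the straight-line homotopy $(\alpha+\beta J,t)\mapsto\alpha+t\beta J$ deformation retracts $U$ onto $D_{\mathbb{R}}$; the same computation identifies $U\cap V=\Omega_{W^{*}}^4\cong W^{*}\times\mathbb{S}_{\mathbb{R}_3}$, which retracts onto $D_{\mathbb{R}}\times\mathbb{S}_{\mathbb{R}_3}$ along any section $\sigma\colon D_{\mathbb{R}}\to D^{*}$ of the collar, e.g. $x\mapsto x+\frac{1}{2}h(x)i$. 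I would also record three elementary facts: $\Omega_D^4$ is connected if and only if $D^{+}$ is (images of distinct connected components of $D^{+}$ in $\Omega_D^4$ are disjoint); an open subset of the plane has the homotopy type of a graph, so $H_k(D^{*})=0$ for $k\ge 2$ and $H_1(D^{*})$ is free abelian; and the inclusion $D^{*}\hookrightarrow D^{+}$ induces isomorphisms on all homology groups (again by a Mayer--Vietoris argument with the collar), so below I may replace $H_*(D^{*})$ by $H_*(D^{+})$.

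If $D_{\mathbb{R}}=\emptyset$ then $\Omega_D^4=V\cong D^{+}\times\mathbb{S}_{\mathbb{R}_3}$, and all six identities follow immediately from the Künneth formula and Proposition \ref{split3}, with $\widetilde{H}_0(\emptyset)=0$. If $D_{\mathbb{R}}\neq\emptyset$ I would run Mayer--Vietoris for $\{U,V\}$. By the homotopy equivalences above and Künneth (with no Tor terms, since $H_*(\mathbb{S}_{\mathbb{R}_3})$ is free), $H_*(U)=H_*(D_{\mathbb{R}})$, $H_*(U\cap V)=H_0(D_{\mathbb{R}})\otimes H_*(\mathbb{S}_{\mathbb{R}_3})$ and $H_*(V)=\bigoplus H_*(D^{+})\otimes H_*(\mathbb{S}_{\mathbb{R}_3})$. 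By naturality of the Künneth isomorphism, under these identifications the inclusion $U\cap V\hookrightarrow U$ becomes the projection $D_{\mathbb{R}}\times\mathbb{S}_{\mathbb{R}_3}\to D_{\mathbb{R}}$, hence an isomorphism on $H_0$ and zero in positive degrees, while $U\cap V\hookrightarrow V$ becomes $\sigma\times\mathrm{id}$; since $D^{+}$ is connected, $\sigma_*\colon H_0(D_{\mathbb{R}})\to H_0(D^{+})$ is the augmentation, so $\sigma_*\otimes\mathrm{id}$ is surjective on $H_0(D_{\mathbb{R}})\otimes H_k(\mathbb{S}_{\mathbb{R}_3})$ with kernel $\widetilde{H}_0(D_{\mathbb{R}})\otimes H_k(\mathbb{S}_{\mathbb{R}_3})$.

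Feeding this into the long exact sequence degree by degree gives everything. For $k\in\{5,3\}$ one has $H_k(U\cap V)=0$, so $H_k(\Omega_D^4)$ fits into $0\to H_k(U)\oplus H_k(V)\to H_k(\Omega_D^4)\to\ker\!\big(H_{k-1}(U\cap V)\to H_{k-1}(U)\oplus H_{k-1}(V)\big)\to 0$; here $H_k(U)\oplus H_k(V)=H_1(D^{+})\otimes H_{k-1}(\mathbb{S}_{\mathbb{R}_3})$ and the kernel is $\widetilde{H}_0(D_{\mathbb{R}})\otimes H_{k-1}(\mathbb{S}_{\mathbb{R}_3})$, which, since $H_4(\mathbb{S}_{\mathbb{R}_3})=\mathbb{Z}$ and $H_2(\mathbb{S}_{\mathbb{R}_3})=\mathbb{Z}\oplus\mathbb{Z}$, is precisely \eqref{sec1} for $k=5$ and \eqref{sec2} for $k=3$. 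For $k\in\{4,2\}$ one has $H_{k-1}(U\cap V)=0$ and $H_k(U\cap V)\to H_k(V)$ surjective, hence $H_k(\Omega_D^4)=\operatorname{coker}\!\big(H_k(U\cap V)\to H_k(U)\oplus H_k(V)\big)=0$, which is \eqref{hom1} and \eqref{hom2} in the case $D_{\mathbb{R}}\neq\emptyset$. For $k=1$, $H_1(U\cap V)=0$ and $H_0(U\cap V)\to H_0(U)\oplus H_0(V)$ is injective because its $U$-component is an isomorphism, so $H_1(\Omega_D^4)\cong H_1(U)\oplus H_1(V)=H_1(D^{+})$, i.e. \eqref{sec3}.

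The genuinely delicate steps are the preparatory ones: verifying that $U$ and $V$ are relatively open and that the stated maps are homeomorphisms onto them, checking that the two inclusions of $U\cap V$ are homotopic to the projection and to $\sigma\times\mathrm{id}$ respectively, and establishing $H_*(D^{*})\cong H_*(D^{+})$ together with the equivalence of connectedness of $\Omega_D^4$ and of $D^{+}$. Once these are in place the Mayer--Vietoris bookkeeping is routine; the only point where care is still required is tracking the connecting homomorphisms through the naturality of Künneth, so that the relevant cokernels and kernels are identified exactly with $\widetilde{H}_0(D_{\mathbb{R}})$ (and its square) rather than with something larger.
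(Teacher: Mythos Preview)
Your proof is correct and follows essentially the same strategy as the paper: a Mayer--Vietoris argument for the cover $\Omega_D^4=\Omega_{D^*}^4\cup\Omega_W^4$, combined with the homotopy equivalences $\Omega_{D^*}^4\simeq D^{+}\times\mathbb{S}_{\mathbb{R}_3}$, $\Omega_W^4\simeq D_{\mathbb{R}}$, $\Omega_{W^*}^4\simeq D_{\mathbb{R}}\times\mathbb{S}_{\mathbb{R}_3}$ and the K\"unneth formula together with Proposition~\ref{split3}. Your write-up is somewhat more explicit in identifying the inclusion-induced maps (as the projection, respectively as $\sigma\times\mathrm{id}$) before reading off kernels and cokernels, but the underlying argument is the same as the paper's.
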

\begin{proof}
By the following equalities $ \Omega_D^4=\Omega^4_{D^*} \cup \Omega_W^4$ and $\Omega^4_{W^*}=\Omega^4_{D^*} \cap \Omega^4_W$ we can build  a Mayer-Vietoris sequence for homology
$$... \to H_{k+1}(\Omega_D^4) \to H_k(\Omega_{W^*}^4) \to H_k(\Omega_{D^*}^4) \oplus H_k(\Omega_W^4) \to H_k(\Omega_D^4) \to ... $$
Similar to the paper \cite[Prop. 2.5]{BW1} we have
$$ \Omega_{W^*}^4 \sim \mathbb{S}_{\mathbb{R}_3} \times D_{\mathbb{R}},$$
$$ \Omega_W^4 \sim D_{\mathbb{R}},$$
$$ \Omega_{D^*}^4 \sim \mathbb{S}_{\mathbb{R}_3} \times D^* \sim \mathbb{S}_{\mathbb{R}_3} \times D^+ .$$

Thus, we can write the Mayer-Vietoris sequence in the following way
\begin{equation}
... \to H_{k+1}(\Omega_D^4) \to H_k( \mathbb{S}_{\mathbb{R}_3} \times D_{\mathbb{R}}) \to H_k(\mathbb{S}_{\mathbb{R}_3} \times D^+) \oplus H_k( D_{\mathbb{R}}) \to H_k(\Omega_D^4) \to ...
\end{equation}
We known that the homology groups of the sphere $ \mathbb{S}_{\mathbb{R}_3}$ are torsion-free, so by the K\"unneth formula we have
$$ H_\ell(\mathbb{S}_{\mathbb{R}_3} \times X) \simeq \bigl( H_0(\mathbb{S}_{\mathbb{R}_3}) \otimes H_\ell(X) \bigl) \oplus \bigl( H_2(\mathbb{S}_{\mathbb{R}_3}) \otimes H_{\ell-2}(X) \bigl) \oplus \bigl( H_4(\mathbb{S}_{\mathbb{R}_3}) \otimes H_{\ell-4}(X) \bigl), \qquad \ell \geq 4,$$

$$  H_3(\mathbb{S}_{\mathbb{R}_3} \times X) \simeq \bigl( H_0(\mathbb{S}_{\mathbb{R}_3}) \otimes H_3(X) \bigl) \oplus \bigl( H_2(\mathbb{S}_{\mathbb{R}_3}) \otimes H_1(X) \bigl),$$

$$  H_2(\mathbb{S}_{\mathbb{R}_3} \times X) \simeq \bigl( H_0(\mathbb{S}_{\mathbb{R}_3}) \otimes H_2(X) \bigl) \oplus \bigl( H_2(\mathbb{S}_{\mathbb{R}_3}) \otimes H_0(X) \bigl),$$

$$ H_1(\mathbb{S}_{\mathbb{R}_3} \times X) \simeq H_0(\mathbb{S}_{\mathbb{R}_3}) \otimes H_1(X),$$

$$ H_0(\mathbb{S}_{\mathbb{R}_3} \times X) \simeq H_0(\mathbb{S}_{\mathbb{R}_3}) \otimes H_0(X).$$
Hence
\begin{eqnarray}
\nonumber
\label{wseq1}
 ... \! \! \! &\to& \! \! \! H_{k+1}(\Omega_D^4)   \to \bigl( H_0(\mathbb{S}_{\mathbb{R}_3}) \otimes H_k(D_{\mathbb{R}}) \bigl) \oplus \bigl( H_2(\mathbb{S}_{\mathbb{R}_3}) \otimes H_{k-2}(D_{\mathbb{R}}) \bigl) \oplus \bigl( H_4(\mathbb{S}_{\mathbb{R}_3}) \otimes H_{k-4}(D_{\mathbb{R}}) \bigl) \to \\ \nonumber
&\to& \! \! \! \!  \bigl( H_0(\mathbb{S}_{\mathbb{R}_3}) \otimes H_k(D^+) \bigl) \oplus \bigl( H_2(\mathbb{S}_{\mathbb{R}_3}) \otimes H_{k-2}(D^+) \bigl) \oplus \bigl( H_4(\mathbb{S}_{\mathbb{R}_3}) \otimes H_{k-4}(D^+) \bigl) \oplus H_k( D_{\mathbb{R}}) \to\\ 
&\to& H_{k}(\Omega_D^4) \to...
\end{eqnarray}
Now we observe that $H_k(D_{\mathbb{R}})= \{0\}$ for $k>0$ and $H_k(D^+)=\{0\}$ for $k>1$. Putting these in the long exact sequence \eqref{wseq1} we get
\begin{eqnarray*}
\nonumber
0  \! \! \! &\to& \! \! \!  \!   H_{4}(\mathbb{S}_{\mathbb{R}_3}) \otimes H_1(D^+) \to H_5(\Omega_D^4) \to H_4(\mathbb{S}_{\mathbb{R}_3}) \otimes H_0(D_{\mathbb{R}}) \to H_4( \mathbb{S}_{\mathbb{R}_3}) \otimes H_{0}(D^+) \to H_{4}(\Omega_D^4) \to\\ \nonumber
& \to&  \! \! \!  \! 0 \to H_{2}(\mathbb{S}_{\mathbb{R}_3}) \otimes H_1(D^+) \to H_3(\Omega_D^4) \to H_2(\mathbb{S}_{\mathbb{R}_3}) \otimes H_0(D_{\mathbb{R}}) \to H_2(\mathbb{S}_{\mathbb{R}_3}) \otimes H_0(D^+) \to H_2(\Omega_D^4) \to \\ \nonumber
& \to& \! \! \!  \! 0 \to H_0(\mathbb{S}_{\mathbb{R}_3}) \otimes H_1(D^+) \to H_1(\Omega_D^4) \to H_0(\mathbb{S}_{\mathbb{R}_3}) \otimes H_0(D_{\mathbb{R}}) \to  \bigl( H_0(\mathbb{S}_{\mathbb{R}_3}) \otimes H_0(D^+) \bigl) \oplus H_0(D_{\mathbb{R}}) \to \\
& \to& \! \! \!  \! H_0(\Omega_D^4) \to 0.
\end{eqnarray*}
This allows us to split the sequence in the following way
\begin{eqnarray}
\label{seq4}
\nonumber
 0 &\to& \! \! \! \! \!  H_{4}(\mathbb{S}_{\mathbb{R}_3}) \otimes H_1(D^+) \to H_5(\Omega_D^4) \to H_4(\mathbb{S}_{\mathbb{R}_3}) \otimes H_0(D_{\mathbb{R}}) \to \\
& \to & \! \! \! \! \!  H_4( \mathbb{S}_{\mathbb{R}_3}) \otimes H_{0}(D^+) \to H_{4}(\Omega_D^4) \to 0
\end{eqnarray}

\begin{eqnarray}
\label{seq5}
\nonumber
0 &\to& \! \! \! \! \! H_{2}(\mathbb{S}_{\mathbb{R}_3}) \otimes H_1(D^+) \to H_3(\Omega_D^4) \to H_2(\mathbb{S}_{\mathbb{R}_3}) \otimes H_0(D_{\mathbb{R}}) \to \\
&\to & \! \! \! \! \!H_2(\mathbb{S}_{\mathbb{R}_3}) \otimes H_0(D^+) \to  H_2(\Omega_D^4) \to 0
\end{eqnarray}

\begin{eqnarray}
\label{seq6}
\nonumber
 0 &\to& \! \! \! \! \! H_0(\mathbb{S}_{\mathbb{R}_3}) \otimes H_1(D^+) \to H_1(\Omega_D^4) \to  H_0(\mathbb{S}_{\mathbb{R}_3}) \otimes H_0(D_{\mathbb{R}}) \to  \\
& \to&  \! \! \! \! \! \bigl( H_0(\mathbb{S}_{\mathbb{R}_3}) \otimes H_0(D^+) \bigl) \oplus H_0(D_{\mathbb{R}}) \to  H_0(\Omega_D^4) \to 0.
\end{eqnarray}
\fbox{\emph{First case}: $D_{\mathbb{R}} \neq \emptyset$ }
\newline
\\Since $D_{\mathbb{R}} \hookrightarrow D^+$, we have a surjective group homomorphism $H_0(D_{\mathbb{R}}) \to H_0(D^+)$ and by definition of reduced homology this has kernel $ \widetilde{H}_0(D_{\mathbb{R}})$. Let us define in \eqref{seq4} the following homomorphism $ \alpha: H_4(\mathbb{S}_{\mathbb{R}_3}) \otimes H_0(D_{\mathbb{R}}) \to H_4( \mathbb{S}_{\mathbb{R}_3}) \otimes H_{0}(D^+)$. Therefore, we can split the sequence \eqref{seq4} in the following way
\begin{equation}
\label{wseq2}
0 \to H_4(\mathbb{S}_{\mathbb{R}_3}) \otimes H_1(D^+) \to H_5(\Omega_D^4) \to \hbox{ker} (\alpha) \to 0,
\end{equation}
and
\begin{equation}
\label{wseq3}
0 \to (H_4(\mathbb{S}_{\mathbb{R}_3}) \otimes H_0(D_{\mathbb{R}}))/ \hbox{ker} (\alpha) \overset{\alpha}{\to} H_4(\mathbb{S}_{\mathbb{R}_3}) \otimes H_{0}(D^+) \to H_4(\Omega_D^4) \to 0.
\end{equation}
Since $H_4(\mathbb{S}_{\mathbb{R}_3}) \otimes H_1(D^+) \simeq H_1(D^+)$ and $\hbox{ker} (\alpha) \simeq \widetilde{H}_0(D_{\mathbb{R}})$, by \eqref{wseq2} we have \eqref{sec1}. Moreover, the exact sequence \eqref{wseq3} implies $H_{4}(\Omega_D^4) \simeq \{0\}$, since $ \alpha$ is surjective.
\\ Now, we observe that by Proposition \ref{split3} and by the distributive property of the tensor product we can write
$$ H_{2}(\mathbb{S}_{\mathbb{R}_3}) \otimes H_0(D_{\mathbb{R}}) \simeq (\mathbb{Z} \oplus \mathbb{Z}) \otimes H_0(D_{\mathbb{R}}) \simeq ( \mathbb{Z} \otimes H_{0}(D_{\mathbb{R}})) \oplus (\mathbb{Z} \otimes H_0(D_{\mathbb{R}})) \simeq H_{0}(D_{\mathbb{R}}) \oplus H_{0}(D_{\mathbb{R}}).$$
Then, we can write the exact sequence \eqref{seq5} as
\begin{eqnarray}
\label{wseq0}
0 &\to& \! \! \! \!H_1(D^{+}) \otimes H_1(D^+) \to H_3(\Omega_D^4) \to  H_0(D_{\mathbb{R}}) \oplus H_{0}(D_{\mathbb{R}})  \to\\ \nonumber
&\to& \! \! \! \! H_0(D^+) \oplus H_0(D^+) \to H_2(\Omega_D^4) \to 0.
\end{eqnarray}
In this case the inclusion map $ D_{\mathbb{R}} \hookrightarrow D^+$ yields a surjective group morphism
$\beta: H_0({D_{\mathbb{R}}}) \oplus H_0(D_{\mathbb{R}}) \to H_0(D^+) \oplus H_0(D^+)$ with $ \hbox{ker} (\beta) \simeq \widetilde{H}_0(D_{\mathbb{R}})  \oplus \widetilde{H}_0(D_{\mathbb{R}})$.
Thus we can split the sequence as
\begin{equation}
\label{wseq7}
0 \to H_1(D^+) \oplus H_1(D^+) \to H_3(\Omega_D^4) \to \hbox{ker} (\beta) \to 0,
\end{equation}
and
\begin{equation}
\label{wseq8}
0 \to (H_0(D_{\mathbb{R}}) \oplus H_0(D_{\mathbb{R}}))/ \hbox{ker} (\beta) \overset{\beta}{\to} H_0(D^+) \oplus H_0(D^+) \to H_2(\Omega_D^4) \to 0.
\end{equation}
Hence, from \eqref{wseq7} we have \eqref{sec2}. Furthermore since $\beta$ is surjective we obtain that $H_2(\Omega_D^4) \simeq \{0\}.$
\newline
\newline
\fbox{\emph{Second case} $D_{\mathbb{R}}= \emptyset$}
\newline
\newline
It is obvious that $ H_0(D_{\mathbb{R}}) \simeq \{0\}$. From \eqref{seq4} we obtain
\begin{equation}
\label{wseq9}
0 \to H_4(\mathbb{S}_{\mathbb{R}_3}) \otimes H_1(D^+) \to H_5(\Omega_D^4) \to 0
\end{equation}
and
\begin{equation}
\label{wseq10}
0 \to H_4(\mathbb{S}_{\mathbb{R}_3}) \otimes H_0(D^+) \to H_4(\Omega_D^4) \to 0.
\end{equation}
Since $H_4(\mathbb{S}_{\mathbb{R}_3}) \otimes H_1(D^+) \simeq H_1(D^+)$ by the sequence \eqref{wseq9} we get \eqref{sec1}. Moreover, by the fact that $H_4(\mathbb{S}_{\mathbb{R}_3}) \otimes H_0(D^+) \simeq \mathbb{Z}$ (since $D^+$ is connected) we have $H_{4}(\Omega_D^4) \simeq \mathbb{Z}.$
\\On the other hand from \eqref{seq5} we have
\begin{equation}
\label{wseq11}
0 \to H_2(\mathbb{S}_3) \otimes H_1(D^+) \to H_3(\Omega_D^4) \to 0,
\end{equation}
and
\begin{equation}
\label{wseq12}
0 \to H_2(\mathbb{S}_{\mathbb{R}_3}) \otimes H_0(D^+) \to H_2(\Omega_D^4) \to 0.
\end{equation}
By $H_2(\mathbb{S}_3) \otimes H_1(D^+) \simeq  H_1(D^+) \oplus H_1(D^+)$ we get \eqref{sec2}. By \eqref{wseq12} and the fact that $D^{+}$ is connected we have
$$ H_2(\Omega_D^4) \simeq  H_2(\mathbb{S}_{\mathbb{R}_3}) \otimes H_0(D^+) \simeq (\mathbb{Z} \otimes H_0(D^+)) \oplus (\mathbb{Z} \otimes H_0(D^+)) \simeq (\mathbb{Z} \otimes \mathbb{Z}) \oplus (\mathbb{Z} \otimes \mathbb{Z}) \simeq \mathbb{Z} \oplus \mathbb{Z}.$$
\newline
Finally, we have to prove \eqref{sec3}. By Proposition \ref{split3} we can write the sequence \eqref{seq6} in the following way
\begin{equation}
0 \to H_1(D^+) \to H_1(\Omega_D^4) \to H_0(D_{\mathbb{R}}) \to H_0(D^+) \oplus H_0(D_{\mathbb{R}}) \to H_0(\Omega_D^4) \to 0.
\end{equation}
Since the map $ H_0(D_{\mathbb{R}}) \to H_0(D^+) \oplus H_0(D_{\mathbb{R}})$ is injective by the properties of exact sequence we obtain
$$ 0 \to H_1(D^+) \to H_1(\Omega_D^4) \to 0,$$
which is  exactly \eqref{sec3}.
\end{proof}

\begin{Cor}
Let $D$ be a symmetric open subset of $ \mathbb{C}$. We assume that the corresponding axially symmetric domain $ \Omega_D^4$ is connected. Moreover, let us assume that $D$ is a bounded domain with smooth boundary. Then all the homology groups are finitely generated  and Proposition \ref{desc} implies the following description of the Betti numbers $b_k= \hbox{dim} \, H_k(\, \, \, , \mathbb{Z}) \otimes_{\mathbb{Z}} \mathbb{R}$. Let
$$ r:= \begin{cases}
b_0(D_{\mathbb{R}})-1 \qquad D_{\mathbb{R}} \neq \emptyset\\
0 \qquad \qquad \qquad D_{\mathbb{R}}= \emptyset.
\end{cases}
$$
Then
\begin{equation}
\label{n1}
b_1(\Omega_D^4)= \frac{1}{2}(b_1(D)-r),
\end{equation}
\begin{equation}
\label{n2}
b_2(\Omega_D^4)=\begin{cases}
	2 \qquad D_{\mathbb{R}}= \emptyset\\
	0 \qquad D_{\mathbb{R}} \neq \emptyset,
\end{cases}
\end{equation}
\begin{equation}
\label{n3}
b_3(\Omega_D^4)=b_1(D)+r,
\end{equation}
\begin{equation}
\label{n4}
b_4(\Omega_D^4)=\begin{cases}
	1 \qquad D_{\mathbb{R}}= \emptyset\\
	0 \qquad D_{\mathbb{R}} \neq \emptyset,
\end{cases}
\end{equation}
\begin{equation}
\label{n5}
b_5(\Omega_D^4)= \frac{1}{2}(b_1(D)+r).
\end{equation}
\end{Cor}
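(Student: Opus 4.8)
The plan is to first pin down what connectedness of $\Omega_D^4$ says about $D$ itself. The map $(z,J)\mapsto \operatorname{Re}(z)+\operatorname{Im}(z)J$ is a continuous surjection $D\times\mathbb{S}_{\mathbb{R}_3}\to\Omega_D^4$; since $\mathbb{S}_{\mathbb{R}_3}=\mathbb{S}_{\mathbb{R}_2}\times\mathbb{S}_{\mathbb{R}_2}$ is connected (Proposition~\ref{split2}) and $\operatorname{Re}(\bar z)+\operatorname{Im}(\bar z)J=\operatorname{Re}(z)+\operatorname{Im}(z)(-J)$, the connected components of $\Omega_D^4$ correspond bijectively to the conjugation-stable components of $D$. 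Hence $\Omega_D^4$ connected leaves exactly two possibilities: (a) $D$ is connected, and then, joining $z$ to $\bar z$ by a path, $D_{\mathbb R}=D\cap\mathbb R\neq\emptyset$ and $b_0(D)=1$; or (b) $D=C\sqcup\bar C$ with $C$ connected and $C\cap\mathbb R=\emptyset$, so $D_{\mathbb R}=\emptyset$ and $b_0(D)=2$. In both cases $D^{+}=D\cap\{\operatorname{Im}\ge 0\}$ is connected: in case (b) one simply has $D^{+}=C$, while in case (a), if $U$ is a component of $D^{+}$ then $U\cup\bar U$ is open in $\mathbb C$ (near a real point it contains both half-discs of a small ball) and closed in $D$, hence equals $D$, forcing $D^{+}=U$. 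Complex conjugation is a homeomorphism $D^{+}\cong D^{-}$, so $b_k(D^{+})=b_k(D^{-})$ for all $k$.

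Finite generation is then quickly dispatched: as $D$ is bounded with smooth boundary, $D$ (open in $\mathbb R^2$), $D^{\pm}$ (non-compact surfaces with boundary, resp.\ corners) and $D_{\mathbb R}$ (finitely many open intervals) all have the homotopy type of finite CW complexes, hence finitely generated homology; plugging this into the Mayer--Vietoris machinery behind Proposition~\ref{desc} --- or into \eqref{sec1}--\eqref{sec3}, \eqref{hom1}, \eqref{hom2}, together with the fact that all K\"unneth terms $H_a(\mathbb{S}_{\mathbb{R}_3})\otimes H_b(D^{\pm})$ and $H_a(\mathbb{S}_{\mathbb{R}_3})\otimes H_b(D_{\mathbb R})$ vanish for $a+b\ge 6$ (Proposition~\ref{split3}) --- makes every $H_k(\Omega_D^4)$ finitely generated.

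The heart of the argument is the numerical identity $b_1(D)=2\,b_1(D^{+})+r$. I would obtain it from the decomposition $D=D^{+}\cup D^{-}$ with $D^{+}\cap D^{-}=D_{\mathbb R}$ via additivity of the Euler characteristic, $\chi(D)=2\chi(D^{+})-\chi(D_{\mathbb R})$; this is immediate in case (b) (disjoint union) and, in case (a), legitimate after the standard thickening of $D^{\pm}$ to open neighbourhoods that deformation--retract onto them --- available since $\partial D$ smooth gives $D_{\mathbb R}$ a product neighbourhood in $D$, exactly as in the proof of Proposition~\ref{desc}. Since $D$ and $D^{\pm}$ have the homotopy type of $1$-complexes and $D^{+}$ is connected, $\chi(D)=b_0(D)-b_1(D)$, $\chi(D^{+})=1-b_1(D^{+})$ and $\chi(D_{\mathbb R})=b_0(D_{\mathbb R})$, whence $b_1(D)=b_0(D)-2+2\,b_1(D^{+})+b_0(D_{\mathbb R})$; and $b_0(D)-2+b_0(D_{\mathbb R})=r$ in both cases ($1-2+(r+1)=r$ in case (a), $2-2+0=r$ in case (b)). Therefore $b_1(D^{+})=\tfrac12(b_1(D)-r)$.

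Finally I would read off the Betti numbers by taking ranks in the short exact sequences of Proposition~\ref{desc} (ranks are additive along short exact sequences of finitely generated abelian groups, and $\operatorname{rank}\widetilde H_0(D_{\mathbb R})=r$): \eqref{sec3} gives $b_1(\Omega_D^4)=b_1(D^{+})=\tfrac12(b_1(D)-r)$, i.e.\ \eqref{n1}; \eqref{sec1} gives $b_5(\Omega_D^4)=b_1(D^{+})+r=\tfrac12(b_1(D)+r)$, i.e.\ \eqref{n5}; \eqref{sec2} gives $b_3(\Omega_D^4)=2\,b_1(D^{+})+2r=b_1(D)+r$, i.e.\ \eqref{n3}; and \eqref{hom1}, \eqref{hom2} give $b_4(\Omega_D^4)$ and $b_2(\Omega_D^4)$ directly, namely $0$ when $D_{\mathbb R}\neq\emptyset$ (case (a)) and $1$, resp.\ $2$, when $D_{\mathbb R}=\emptyset$ (case (b)), i.e.\ \eqref{n4}, \eqref{n2}. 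The only genuinely delicate points are the ``shape'' statement of the first paragraph --- that connectedness of $\Omega_D^4$ forces $D^{+}$ connected and $D$ of type (a) or (b), which is what makes the right-hand sides of \eqref{n1}--\eqref{n5} well defined and is tacitly used already in Proposition~\ref{desc} --- and the justification that the non-open cover $D=D^{+}\cup D^{-}$ still computes $\chi(D)$ additively; the rest is bookkeeping with ranks in \eqref{sec1}--\eqref{sec3}, \eqref{hom1}, \eqref{hom2}.
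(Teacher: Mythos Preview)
Your argument is correct and follows the same skeleton as the paper's: read off the Betti numbers from the short exact sequences \eqref{sec1}--\eqref{sec3} and the computations \eqref{hom1}, \eqref{hom2} of Proposition~\ref{desc}. The paper's own proof is a one-liner that simply points to those five formulas, so you have in fact supplied considerably more than the authors do.

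The one substantive step you add---and which the paper leaves entirely implicit---is the identity $b_1(D)=2\,b_1(D^{+})+r$, needed to pass from the $b_1(D^{+})$ appearing in the sequences to the $b_1(D)$ appearing in the statement. Your Euler-characteristic derivation via $\chi(D)=2\chi(D^{+})-\chi(D_{\mathbb R})$ is perfectly valid. An alternative, slightly more direct, route is to take ranks in the exact sequence of Lemma~\ref{res2} (quoted from \cite{BW1} and appearing just after this Corollary in the paper),
\[
0\to H_1(D^{+})\oplus H_1(D^{-})\to H_1(D)\to \widehat{H}_0(D_{\mathbb R})\to 0,
\]
together with $D^{+}\cong D^{-}$ via conjugation and $\operatorname{rank}\widehat{H}_0(D_{\mathbb R})=r$ when $D^{+}$ is connected. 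Either way one lands on $b_1(D^{+})=\tfrac12(b_1(D)-r)$, and the rest is the bookkeeping you carry out. Your preliminary analysis (that connectedness of $\Omega_D^4$ forces $D^{+}$ connected and places $D$ in one of the two cases (a), (b)) is also correct and is exactly the hypothesis under which Proposition~\ref{desc} is stated; the paper uses this tacitly.

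One small wording quibble: the components of $\Omega_D^4$ correspond to conjugation \emph{orbits} of components of $D$ (pairs $\{C,\bar C\}$), not to the conjugation-stable components; your case split (a)/(b) is nonetheless the correct conclusion.
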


\begin{proof}
The formulas \eqref{n1}, \eqref{n2}, \eqref{n3}, \eqref{n4}, \eqref{n5} follow respectively by \eqref{sec3}, \eqref{hom2}, \eqref{sec2}, \eqref{hom1}, \eqref{sec1}.
\end{proof}

\begin{nb}
By Proposition \ref{np} we have 
$$ b_1(\Omega_D^4)=b_1(\Omega_D^2(\mathbb{R}_2)),$$
$$ b_2(\Omega_{D}^4)= b_2(\Omega_D^2(\mathbb{R}_2)) + b_2(\Omega_D^2(\mathbb{R}_2)),$$
$$ b_3(\Omega_{D}^4)= b_3(\Omega_D^2(\mathbb{R}_2)) + b_3(\Omega_D^2(\mathbb{R}_2)),$$
$$ b_4(\Omega_D^4)=b_2(\Omega_{D}^2(\mathbb{R}_2)),$$
$$ b_5(\Omega_D^4)=b_3(\Omega_{D}^2(\mathbb{R}_2)).$$
\end{nb}

\begin{Cor}
\label{result1}
Let $D$ be a symmetric open subset of $ \mathbb{C}$. Let us assume that $ \Omega_D^4$ is not necessarily connected. Then
\begin{equation}
\label{hom1b}
H_{4}(\Omega_D^4)= \begin{cases}
0  \, \, \, \, \qquad D_{\mathbb{R}} \neq \emptyset \\
\mathbb{Z}^k \qquad D_{\mathbb{R}}= \emptyset,
\end{cases}
\end{equation}

\begin{equation}
\label{hom2b}
H_{2}(\Omega_D^4)= \begin{cases}
0 \qquad \qquad \quad D_{\mathbb{R}} \neq \emptyset \\
\mathbb{Z}^k \oplus \mathbb{Z}^k \qquad D_{\mathbb{R}}= \emptyset,
\end{cases}
\end{equation}
where $k$ denotes the number of connected components of $D^+$ which do not intersect $ \mathbb{R}$.
\\Let $ \widehat{H}_0(D_{\mathbb{R}})$ be the kernel of the homomorphism $i_*: H_{0}(D_{\mathbb{R}}) \to H_0(D^+)$. Then we have the following exact sequences
\begin{equation}
\label{sec1b}
0 \to H_1(D^+) \to H_5(\Omega_D^4) \to \widehat{H}_0(D_{\mathbb{R}}) \to 0,
\end{equation}

\begin{equation}
\label{sec2b}
0 \to H_{1}(D^+) \oplus H_1(D^+) \to H_3(\Omega_D^4) \to \widehat{H}_0(D_{\mathbb{R}}) \oplus \widehat{H}_0(D_{\mathbb{R }}) \to 0,
\end{equation}

\begin{equation}
\label{sec3b}
0 \to H_1(D^+) \to H_1(\Omega_D^4) \to 0.
\end{equation}
\end{Cor}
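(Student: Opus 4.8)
The plan is to reduce everything to Proposition \ref{desc} by decomposing $D$ into its connected components. Since $D$ is symmetric, each connected component $C$ of $D$ either satisfies $\overline C=C$, or $\overline C$ is a distinct component; accordingly write $D=\bigsqcup_\mu\widetilde C_\mu$, where each \emph{symmetrized component} $\widetilde C_\mu$ is either a symmetric component of $D$ or a conjugate pair $C\cup\overline C$. Along the way I would record the elementary facts: (i) if $\overline C=C$ then $C\cap\mathbb R\neq\emptyset$ (a connected open subset of $\mathbb C\setminus\mathbb R$ lies in a single half-plane, contradicting symmetry), while if $\overline C\neq C$ then $C\cap\mathbb R=\emptyset$; (ii) $\Omega^4_{C\cup\overline C}=\Omega^4_C$, because $\alpha+\beta J=\alpha+(-\beta)(-J)$ and $-J\in\mathbb S_{\mathbb R_3}$; (iii) for each $\mu$ the set $\widetilde C_\mu^{\,+}:=\widetilde C_\mu\cap\{\operatorname{Im}\geq 0\}$ is connected (for a conjugate pair it equals $C$; for a symmetric $C$ fold a path $\gamma$ of $C$ via $t\mapsto\operatorname{Re}\gamma(t)+i|\operatorname{Im}\gamma(t)|$), and consequently $\Omega^4_{\widetilde C_\mu}$ is connected (lift a path of $\widetilde C_\mu^{\,+}$ to $\Omega^4_{\widetilde C_\mu}$ by $\alpha+i\beta\mapsto\alpha+\beta J_0$ for a fixed $J_0$, and join the endpoints to the original points along the spheres $\{\alpha+\beta J:J\in\mathbb S_{\mathbb R_3}\}$); (iv) by Proposition \ref{one} each $\Omega^4_{\widetilde C_\mu}$ is relatively open in $\mathcal Q_{\mathbb R_3}$, and the $\Omega^4_{\widetilde C_\mu}$ are pairwise disjoint with union $\Omega_D^4$, so they are exactly the connected components of $\Omega_D^4$ and $H_k(\Omega_D^4)\cong\bigoplus_\mu H_k(\Omega^4_{\widetilde C_\mu})$ for all $k$.

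Next I would apply Proposition \ref{desc} to each $\Omega^4_{\widetilde C_\mu}$ (legitimate, each being connected and each $\widetilde C_\mu$ symmetric). If $\widetilde C_\mu=C$ with $\overline C=C$, then $(\widetilde C_\mu)_{\mathbb R}\neq\emptyset$ by (i), so the first case of Proposition \ref{desc} gives $H_2(\Omega^4_{\widetilde C_\mu})=H_4(\Omega^4_{\widetilde C_\mu})=0$, $H_1(\Omega^4_{\widetilde C_\mu})\cong H_1(C^{+})$, and the exact sequences
\[
0\to H_1(C^{+})\to H_5(\Omega^4_{\widetilde C_\mu})\to\widetilde H_0(C_{\mathbb R})\to 0,\qquad 0\to H_1(C^{+})\oplus H_1(C^{+})\to H_3(\Omega^4_{\widetilde C_\mu})\to\widetilde H_0(C_{\mathbb R})\oplus\widetilde H_0(C_{\mathbb R})\to 0 .
\]
If $\widetilde C_\mu=C\cup\overline C$ is a conjugate pair, then $(\widetilde C_\mu)_{\mathbb R}=\emptyset$ and $(\widetilde C_\mu)^{+}=C$, so the second case of Proposition \ref{desc} gives $H_4(\Omega^4_{\widetilde C_\mu})=\mathbb Z$, $H_2(\Omega^4_{\widetilde C_\mu})=\mathbb Z\oplus\mathbb Z$, $H_1(\Omega^4_{\widetilde C_\mu})\cong H_5(\Omega^4_{\widetilde C_\mu})\cong H_1(C)$ and $H_3(\Omega^4_{\widetilde C_\mu})\cong H_1(C)\oplus H_1(C)$ (the reduced $H_0$ terms being $0$). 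Taking the direct sum over $\mu$, the identity $H_1(D^{+})=\bigoplus_\mu H_1\!\left((\widetilde C_\mu)^{+}\right)$ yields \eqref{sec3b} at once, and, since a direct sum of short exact sequences is short exact, it yields \eqref{sec1b} and \eqref{sec2b} as soon as one knows that $\bigoplus_{C}\widetilde H_0(C_{\mathbb R})\cong\widehat{H}_0(D_{\mathbb R})$, the sum running over the symmetric connected components $C$ of $D$. Finally, among the summands only the conjugate pairs contribute to $H_2$ and $H_4$ (the symmetric ones contributing $0$), each contributing $\mathbb Z\oplus\mathbb Z$ to $H_2$ and $\mathbb Z$ to $H_4$; by (i) these conjugate pairs are precisely the $k$ connected components of $D^{+}$ that do not meet $\mathbb R$, so $H_2(\Omega_D^4)\cong\mathbb Z^{k}\oplus\mathbb Z^{k}$ and $H_4(\Omega_D^4)\cong\mathbb Z^{k}$, which is \eqref{hom1b} and \eqref{hom2b}.

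The step requiring the most care is the identification $\widehat{H}_0(D_{\mathbb R})\cong\bigoplus_{C}\widetilde H_0(C_{\mathbb R})$ (sum over symmetric components): this is exactly where the failure of connectedness of $\Omega_D^4$, equivalently of $D^{+}$, forces the reduced group $\widetilde H_0(D_{\mathbb R})$ appearing in Proposition \ref{desc} to be replaced by the generally smaller group $\widehat{H}_0(D_{\mathbb R})=\ker\!\left(i_*\colon H_0(D_{\mathbb R})\to H_0(D^{+})\right)$. It holds because $D_{\mathbb R}=\bigsqcup_{C}C_{\mathbb R}$ (only symmetric components meet $\mathbb R$, by (i)), each $C_{\mathbb R}$ is contained in the single connected component $C^{+}$ of $D^{+}$, and distinct symmetric components produce distinct components of $D^{+}$; hence $i_*$ is block diagonal, and using that each $C^{+}$ is connected (fact (iii)) one gets $\ker i_*=\bigoplus_{C}\ker\!\left(H_0(C_{\mathbb R})\twoheadrightarrow H_0(C^{+})\right)=\bigoplus_{C}\widetilde H_0(C_{\mathbb R})$. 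The verifications in (i)--(iv), together with the exactness of direct sums of short exact sequences, are routine and are the only remaining ingredients.
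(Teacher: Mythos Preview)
Your proof is correct and follows exactly the approach of the paper, which simply says that the result ``is a consequence of Proposition \ref{desc} and the fact that the homology of a disconnected space is isomorphic to the direct sum of the homology of its connected components.'' You supply considerably more detail than the paper does---in particular the explicit identification of the connected components of $\Omega_D^4$ with the symmetrized components $\widetilde C_\mu$ of $D$, and the verification that $\widehat H_0(D_{\mathbb R})\cong\bigoplus_C\widetilde H_0(C_{\mathbb R})$---but the underlying strategy is identical.
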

\begin{proof}
This is a consequence of Proposition \ref{desc} and the fact that the homology of a disconnected space is isomorphic to the direct sum of the homology of its connected components.
\end{proof}

Now we explain the geometric meaning of the short exact sequence \eqref{sec1}. If we consider an element $ \alpha \in H_1(D^+)$ we can represent it as a finite formal $\mathbb{Z}$-linear combination of closed curves $\gamma_j: S^1 \to D^+$. Each of them defines a map $ \eta:S^1 \times \mathbb{S}_{\mathbb{R}_3} \to \Omega_{D}^4$ through
$$ \eta(t,I)= \hbox{Re}(\gamma_j(t))+I \cdot \hbox{Im}(\gamma_j(t)).$$
The fundamental class of the real five-dimensional manifold $\eta(S^1 \times \mathbb{S}_{\mathbb{R}_3})$ defines an element in $H_5(\Omega_{D}^4)$.

It is also possible to prove that the sequence \eqref{sec1} has not a natural splitting. Given an element $ \beta \in H_0(D_{\mathbb{R}})$ we can represent it as a formal $ \mathbb{Z}$-linear combinations of points $ \sum n_i \{p_i\}$. Let us assume that $\beta$ is in the kernel of the natural map to $\mathbb{Z}$ which is given by $ \sum n_i \{p_i\} \mapsto \sum n_i$. This implies that $\beta$ is the sum of elements of the form $1 \{p_i\}-1\{q_i\}$. Now, we can choose a curve $\gamma: [0,1] \to D^+$ such that $\gamma(0)=p_i$, $\gamma(1)=q_i$ and $\gamma(t) \in D^+ \setminus \mathbb{R}$ for $0<t<1$. Then $\Omega_{\gamma([0,1])}^4$ is a 5-sphere defining an element in $H_5(\Omega_{D}^4)$. However, we observe that the construction depends on the choice of the curve $\gamma$. This means that the sequence \eqref{sec1} has not a natural splitting.
\newline
\newline
It is possible to have a geometric meaning also for the exact sequence \eqref{sec2}. If we consider a couple of element $( \alpha, \alpha) \in H_1(D^+) \oplus H_1(D^+)$ we can represent it as a couple of finite $ \mathbb{Z}$-linear combination of closed curves $ \gamma_j:S^{1} \to D^{+}$. This couple of curves defines a map $ \eta': S^1  \times \mathbb{S}^2 \to \Omega^4_D$ through
$$ \eta'(t,K)=\hbox{Re}(\gamma_j(t))+K \cdot \hbox{Im}(\gamma_j(t)),$$
where $K \in \mathbb{S}^2 \subset \mathbb{S}_{\mathbb{R}_3}$
The fundamental class of $\eta' \left( S^1 \times \mathbb{S}^2 \right)$ defines an element in $H_3(\Omega_{D}^4)$.

As before it is possible to prove that the sequence \eqref{sec2} has not a natural splitting. Given a couple $ (\beta, \beta) \in H_0(D_{\mathbb{R}}) \oplus H_0(D_{\mathbb{R}})$ we can represent each $\beta$ as a formal $ \mathbb{Z}$-linear combination of points $ \sum n_i \{p_i\}$. Let us assume that each $\beta$ is in the kernel of the natural map to $\mathbb{Z}$ which is given by $ \sum n_i \{p_i\} \mapsto \sum n_i$. This implies that $\beta$ is the sum of elements of the form $1 \{p_i\}-1\{q_i\}$. Now, we can choose a curve $\gamma: [0,1] \to D^+$ such that $\gamma(0)=p_i$, $\gamma(1)=q_i$ and $\gamma(t) \in D^+ \setminus \mathbb{R}$ for $0<t<1$. Then $\Omega^2_{\gamma([0,1])}$ (see Definition \eqref{ax}) is a 3-sphere defining an element in $H_3(\Omega_{D}^4)$. However, we observe that the construction depends on the choice of the curve $\gamma$. This means that also the sequence \eqref{sec2} has not a natural splitting.
\newline
\newline
We recall from \cite[Lemma 2.9]{BW1} and \cite[Cor. 2.10]{BW1} the following results.
\begin{lemma}
\label{res2}
Let $D \subset \mathbb{C}$ be a symmetric open subset. Then, there is a natural exact sequence
\begin{equation}
0 \to H_1(D^+) \oplus H_1(D^-) \to H_1(D) \to \widehat{H}_0(D_\mathbb{R}) \to 0
\end{equation}
\end{lemma}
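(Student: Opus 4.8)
The plan is to obtain the sequence from the Mayer--Vietoris sequence attached to the splitting $D = D^+ \cup D^-$, whose intersection is $D^+\cap D^- = D_{\mathbb{R}}$. Since $D^+$ and $D^-$ are not open in $D$, I would first thicken them: using that $h$ extends continuously over the closures of the intervals of $D_{\mathbb{R}}$ and tends to $0$ at their finite endpoints, set $U = D^+ \cup \{x+iy \in D : x \in D_{\mathbb{R}},\ -\tfrac12 h(x) < y \le 0\}$ and let $V$ be the complex conjugate of $U$. Then $U,V$ are open in $D$, $U\cup V = D$, the vertical contraction $r_s(x+iy)=x+i\big((1-s)y+s\max(y,0)\big)$ is a deformation retraction of $U$ onto $D^+$ (and symmetrically $V$ onto $D^-$), while $U\cap V = \{x+iy\in D : x\in D_{\mathbb{R}},\ |y| < \tfrac12 h(x)\}$ is a vertically convex neighbourhood of $D_{\mathbb{R}}$ that deformation retracts onto $D_{\mathbb{R}}$. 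Hence $U \simeq D^+$, $V \simeq D^-$, $U\cap V \simeq D_{\mathbb{R}}$.

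Next I would write the Mayer--Vietoris sequence of the open cover $\{U,V\}$ of $D$ and transport it along these homotopy equivalences, getting
$$\cdots \to H_1(D_{\mathbb{R}}) \to H_1(D^+)\oplus H_1(D^-) \to H_1(D) \xrightarrow{\ \partial\ } H_0(D_{\mathbb{R}}) \xrightarrow{\ \phi\ } H_0(D^+)\oplus H_0(D^-) \to \cdots .$$
Since $D_{\mathbb{R}}=D\cap\mathbb{R}$ is a disjoint union of open intervals, $H_1(D_{\mathbb{R}})=0$, so $H_1(D^+)\oplus H_1(D^-)\to H_1(D)$ is injective and, by exactness, $\mathrm{im}(\partial)=\ker\phi$. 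It therefore suffices to prove $\ker\phi=\widehat{H}_0(D_{\mathbb{R}})$; granting this, restricting the long exact sequence gives the asserted short exact sequence $0 \to H_1(D^+)\oplus H_1(D^-) \to H_1(D) \to \widehat{H}_0(D_{\mathbb{R}}) \to 0$.

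For the identification one has $\ker\phi=\ker(i^+_*)\cap\ker(i^-_*)$, where $i^\pm_* : H_0(D_{\mathbb{R}})\to H_0(D^\pm)$ are induced by inclusion (the sign occurring in $\phi$ is irrelevant for the kernel), and by definition $\widehat{H}_0(D_{\mathbb{R}})=\ker(i^+_*)$. The claim $\ker(i^+_*)=\ker(i^-_*)$ is exactly where the symmetry hypothesis on $D$ enters: complex conjugation restricts to a homeomorphism $D^+\to D^-$ which is the identity on $D_{\mathbb{R}}$, so on $H_0$ it induces an isomorphism $c_*:H_0(D^+)\to H_0(D^-)$ with $c_*\circ i^+_* = i^-_*$, whence $\ker(i^-_*)=\ker(i^+_*)$ and thus $\ker\phi=\ker(i^+_*)=\widehat{H}_0(D_{\mathbb{R}})$. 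Naturality of the resulting sequence with respect to inclusions $D\subset D_1$ of symmetric open subsets follows from functoriality of Mayer--Vietoris, after choosing the thickenings compatibly (for instance measuring distances to $\partial D_1$ throughout, or comparing any two choices through a map of open covers). I expect the only genuinely delicate point to be this last step --- making sure the Mayer--Vietoris connecting map lands precisely in $\widehat{H}_0(D_{\mathbb{R}})$ and that no orientation subtlety in $\phi$ disturbs the kernel computation; the construction of the open thickenings and of the deformation retractions is entirely routine.
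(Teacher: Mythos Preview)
Your argument is correct. The paper does not actually prove this lemma: it is simply recalled from \cite[Lemma 2.9]{BW1}, so there is no proof in the present paper to compare against directly. Your Mayer--Vietoris argument with thickenings $U\simeq D^+$, $V\simeq D^-$, $U\cap V\simeq D_{\mathbb{R}}$ is the natural direct proof, and your identification $\ker\phi=\ker(i^+_*)\cap\ker(i^-_*)=\ker(i^+_*)=\widehat{H}_0(D_{\mathbb{R}})$ via the conjugation symmetry is exactly the right way to land in $\widehat{H}_0$ rather than merely $\widetilde{H}_0$.

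For comparison, the covering the paper uses elsewhere (e.g.\ in the proof of Proposition~\ref{res21}) is $D=(D\setminus D_{\mathbb{R}})\cup V$ with $V$ a symmetric collar of $D_{\mathbb{R}}$; there $D\setminus D_{\mathbb{R}}\simeq D^+\sqcup D^-$ and $V\simeq D_{\mathbb{R}}$, while the intersection is two copies of $D_{\mathbb{R}}$. That covering is chosen because it matches the covering $\Omega_D^4=\Omega_{D^*}^4\cup\Omega_W^4$ on the Clifford side and so permits a morphism of Mayer--Vietoris sequences via $\xi$. Your covering is more economical for the lemma in isolation and yields the sequence with one fewer bookkeeping step; the paper's covering buys compatibility with the $\Omega_D^4$ decomposition, which is what is needed later. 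Both routes give the same short exact sequence, and naturality is handled adequately by your remark on compatible thickenings.
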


\begin{Cor}
\label{res3}
Let $D \subset D_1$ be symmetric open subsets in $ \mathbb{C}$. Assume that $H_1(D) \to H_1(D_1)$ is injective. Then $H_1(D^+) \to H_1(D^+_1)$ is injective.
\end{Cor}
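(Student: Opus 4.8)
The plan is to deduce the corollary from the \emph{naturality} of the exact sequence in Lemma \ref{res2}. First I would apply that lemma to both $D$ and to $D_1$. The inclusion $i\colon D\hookrightarrow D_1$ preserves the imaginary part, hence it restricts to inclusions $i^+\colon D^+\hookrightarrow D_1^+$, $i^-\colon D^-\hookrightarrow D_1^-$ and $D_{\mathbb{R}}\hookrightarrow (D_1)_{\mathbb{R}}$. Since the sequence of Lemma \ref{res2} is produced functorially (it is the Mayer--Vietoris sequence for the decomposition $D=D^+\cup D^-$ with $D^+\cap D^-=D_{\mathbb{R}}$, using $H_1(D_{\mathbb{R}})=0$ because $D_{\mathbb{R}}$ is a disjoint union of intervals), these inclusions induce a morphism of short exact sequences, i.e.\ a commutative ladder whose left vertical arrow is the direct sum $i^+_*\oplus i^-_*$, whose middle vertical arrow is $i_*\colon H_1(D)\to H_1(D_1)$, and whose right vertical arrow is the map induced on $\widehat{H}_0$.

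Next comes the diagram chase. From the top row of the ladder, $H_1(D^+)\oplus H_1(D^-)\hookrightarrow H_1(D)$ is injective; composing with the hypothesis that $i_*\colon H_1(D)\to H_1(D_1)$ is injective shows that the composite $H_1(D^+)\oplus H_1(D^-)\to H_1(D_1)$ is injective. By commutativity of the left square this composite factors as $H_1(D^+)\oplus H_1(D^-)\xrightarrow{\,i^+_*\oplus i^-_*\,}H_1(D_1^+)\oplus H_1(D_1^-)\hookrightarrow H_1(D_1)$, so the first arrow $i^+_*\oplus i^-_*$ must itself be injective. Finally, since this map is componentwise and its target is a direct sum, injectivity of $i^+_*\oplus i^-_*$ forces injectivity of each summand: if $i^+_*(x)=0$ then $(i^+_*\oplus i^-_*)(x,0)=0$, hence $(x,0)=0$, so $x=0$. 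This is exactly the assertion that $H_1(D^+)\to H_1(D_1^+)$ is injective (and the same argument gives $H_1(D^-)\to H_1(D_1^-)$ injective, though only the $+$ version is used later).

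The only points that genuinely need care --- and hence the ``main obstacle'' --- are: (a) verifying that the exact sequence of Lemma \ref{res2} is natural in $D$, so that the ladder commutes and its left vertical arrow is precisely $i^+_*\oplus i^-_*$; this rests on the functoriality of Mayer--Vietoris together with the compatibility of the decompositions $D=D^+\cup D^-$ and $D_1=D_1^+\cup D_1^-$ under $i$, which holds because $i$ respects $\mathrm{Im}$; and (b) the elementary remark that a componentwise map into a direct sum of abelian groups is injective if and only if each of its components is. Everything else is a one-line diagram chase, so the proof is short once the naturality in (a) is pinned down.
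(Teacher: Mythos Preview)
Your argument is correct and is precisely the intended one: the paper does not reprove this corollary here but merely recalls it from \cite[Cor.~2.10]{BW1}, and the proof there is exactly the diagram chase you describe, using the naturality of the short exact sequence of Lemma~\ref{res2} together with the elementary fact that a componentwise map of direct sums is injective iff each component is.
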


\begin{prop}
\label{res21}
Let $D$ be a symmetric open subset of $ \mathbb{C}$. Then we have the following exact sequence
\begin{equation}
0 \to H_1(D^+) \overset{\mathfrak{a}}{\to} H_1(D) \overset{\mathfrak{b}}{\to} H_5(\Omega_D^4) \to 0,
\end{equation}
where $ \mathfrak{a}$, $\mathfrak{b}$ are defined as follows. Let $ \tau: \mathbb{C} \to \mathbb{C}$ be the complex conjugate and let $ \xi: D \times \mathbb{S}_{\mathbb{R}_3} \to \Omega_D^4$ be the map given by
$$ \xi(x+ y i, J)=x+y J.$$
Thus, we define $ \mathfrak{a}(\gamma)=\gamma- \tau_{*} \gamma$ and $ \mathfrak{b}(\gamma)= \xi_*(\gamma \times [\mathbb{S}_{\mathbb{R}_3}])$, where $[\mathbb{S}_{\mathbb{R}_3}] \in H_4 (\mathbb{S}_{\mathbb{R}_3})$ denotes the fundamental class of $\mathbb{S}_{\mathbb{R}_3}$.
\end{prop}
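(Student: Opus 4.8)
The plan is to squeeze the desired sequence between the two short exact sequences already available: the one of Lemma~\ref{res2},
\[
0 \to H_1(D^{+})\oplus H_1(D^{-}) \overset{\iota}{\to} H_1(D) \overset{\delta}{\to} \widehat{H}_0(D_{\mathbb{R}}) \to 0,
\]
and the sequence \eqref{sec1b} of Corollary~\ref{result1}, $0 \to H_1(D^{+}) \overset{j}{\to} H_5(\Omega_D^4) \overset{\partial}{\to} \widehat{H}_0(D_{\mathbb{R}}) \to 0$, and then to finish with a short diagram chase. First I record a symmetry of $\mathfrak{b}$: writing $a:\mathbb{S}_{\mathbb{R}_3}\to\mathbb{S}_{\mathbb{R}_3}$ for $a(J)=-J$, the formula $\xi(x+yi,J)=x+yJ$ gives at once $\xi\circ(\tau\times\mathrm{id})=\xi\circ(\mathrm{id}\times a)$. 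Since $\mathbb{S}_{\mathbb{R}_3}\cong\mathbb{S}_{\mathbb{R}_2}\times\mathbb{S}_{\mathbb{R}_2}$ is a product of two $2$-spheres (Proposition~\ref{split2}) and the antipodal map of $S^{2}$ has degree $-1$, the map $a$ acts on $H_4(\mathbb{S}_{\mathbb{R}_3})\cong\mathbb{Z}$ by $(-1)(-1)=1$, so $a_{*}[\mathbb{S}_{\mathbb{R}_3}]=[\mathbb{S}_{\mathbb{R}_3}]$. Naturality of the homological cross product then yields $\mathfrak{b}\circ\tau_{*}=\mathfrak{b}$ on $H_1(D)$; in particular $\mathfrak{b}\circ\mathfrak{a}(\gamma)=\mathfrak{b}(\gamma)-\mathfrak{b}(\tau_{*}\gamma)=0$. (This also explains why the sign in the definition of $\mathfrak{a}$ is a minus here, in contrast to the quaternionic case, where the relevant sphere is a single $2$-sphere and the corresponding degree is $-1$.)

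Next I identify the two outer maps. From the construction of \eqref{sec1b} in Proposition~\ref{desc} — via the homotopy equivalence $\Omega_{D^{*}}^4\simeq\mathbb{S}_{\mathbb{R}_3}\times D^{+}$ and the K\"unneth summand $H_4(\mathbb{S}_{\mathbb{R}_3})\otimes H_1(D^{+})$ — the injection $j$ is exactly $\gamma\mapsto\xi_{*}(\gamma\times[\mathbb{S}_{\mathbb{R}_3}])$ for $\gamma\in H_1(D^{+})\hookrightarrow H_1(D)$, i.e. $j=\mathfrak{b}|_{H_1(D^{+})}$; this is precisely the geometric description of \eqref{sec1} recalled above, and it gives $\mathrm{Image}(j)=\ker\partial\subseteq\mathrm{Image}(\mathfrak{b})$. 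The main obstacle is the remaining compatibility $\partial\circ\mathfrak{b}=\pm\delta$ (up to an automorphism of $\widehat{H}_0(D_{\mathbb{R}})$). Both $\delta$ and $\partial$ are Mayer--Vietoris connecting homomorphisms, for the decompositions $D=D^{*}\cup W$ (with $W^{*}=D^{*}\cap W$) and $\Omega_D^4=\Omega_{D^{*}}^4\cup\Omega_{W}^4$ (with $\Omega_{W^{*}}^4=\Omega_{D^{*}}^4\cap\Omega_{W}^4$) used in the proof of Proposition~\ref{desc}. Since $\xi$ maps $D^{*}\times\mathbb{S}_{\mathbb{R}_3}$, $W\times\mathbb{S}_{\mathbb{R}_3}$, $W^{*}\times\mathbb{S}_{\mathbb{R}_3}$ into $\Omega_{D^{*}}^4$, $\Omega_{W}^4$, $\Omega_{W^{*}}^4$ respectively and is a homeomorphism on the intersection piece, and since crossing with the fixed class $[\mathbb{S}_{\mathbb{R}_3}]$ commutes with connecting homomorphisms, the map $\mathfrak{b}=\xi_{*}\circ\bigl((-)\times[\mathbb{S}_{\mathbb{R}_3}]\bigr)$ intertwines $\delta$ and $\partial$; tracing through the top K\"unneth summand $H_4(\mathbb{S}_{\mathbb{R}_3})\otimes H_0(\cdot)$ shows that the induced map on the kernels $\widehat{H}_0(D_{\mathbb{R}})$ is an isomorphism, which is the asserted compatibility. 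Carrying this bookkeeping out carefully is the technical heart of the argument; everything else is formal.

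Finally, the formal part. Set $\phi:H_1(D^{+})\oplus H_1(D^{-})\to H_1(D^{+})$, $\phi(\alpha,\beta)=\alpha+\tau_{*}\beta$, using the isomorphism $\tau_{*}:H_1(D^{-})\xrightarrow{\ \cong\ }H_1(D^{+})$; it is surjective, $\ker\phi=\{(-\tau_{*}\beta,\beta):\beta\in H_1(D^{-})\}\cong H_1(D^{+})$, and $\iota$ restricted to $\ker\phi$ is exactly $\mathfrak{a}$ under this identification. By the previous two paragraphs the ladder
\[
\begin{CD}
0 @>>> H_1(D^{+})\oplus H_1(D^{-}) @>\iota>> H_1(D) @>\delta>> \widehat{H}_0(D_{\mathbb{R}}) @>>> 0\\
@. @VV\phi V @VV\mathfrak{b}V @VVV @.\\
0 @>>> H_1(D^{+}) @>j>> H_5(\Omega_D^4) @>\partial>> \widehat{H}_0(D_{\mathbb{R}}) @>>> 0
\end{CD}
\]
commutes (the left square because $j\phi(\alpha,\beta)=\mathfrak{b}(\alpha+\tau_{*}\beta)=\mathfrak{b}(\alpha)+\mathfrak{b}(\beta)=\mathfrak{b}\iota(\alpha,\beta)$, using $j=\mathfrak{b}|_{H_1(D^{+})}$ and $\mathfrak{b}\tau_{*}=\mathfrak{b}$; the right square is the compatibility above, the right vertical map being an isomorphism) and has exact rows. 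Since $\phi$ is surjective and the right vertical map is an isomorphism, the snake lemma (or a direct chase) gives $\ker\mathfrak{b}\cong\ker\phi$ via $\iota$ and that $\mathfrak{b}$ is surjective; unwinding, $\mathfrak{a}$ is injective with $\mathrm{Image}\,\mathfrak{a}=\ker\mathfrak{b}$. Concretely: if $\mathfrak{b}(\gamma)=0$ then $\pm\delta(\gamma)=\partial\mathfrak{b}(\gamma)=0$, so $\gamma=\iota(\alpha,\beta)$, and then $j\phi(\alpha,\beta)=\mathfrak{b}\iota(\alpha,\beta)=0$ forces $(\alpha,\beta)\in\ker\phi$, i.e. $\gamma\in\mathrm{Image}\,\mathfrak{a}$; and for $c\in H_5(\Omega_D^4)$ one picks $\gamma$ with $\delta(\gamma)=\pm\partial(c)$, so $\partial(c-\mathfrak{b}(\gamma))=0$, hence $c-\mathfrak{b}(\gamma)=j(\alpha)=\mathfrak{b}(\alpha)$ and $c=\mathfrak{b}(\gamma+\alpha)$. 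This establishes exactness of $0\to H_1(D^{+})\overset{\mathfrak{a}}{\to}H_1(D)\overset{\mathfrak{b}}{\to}H_5(\Omega_D^4)\to 0$ with $\mathfrak{a},\mathfrak{b}$ as in the statement.
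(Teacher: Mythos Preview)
Your approach is essentially the paper's: both build the ladder connecting the short exact sequence of Lemma~\ref{res2} to \eqref{sec1b} via the map $\mathfrak{b}=\xi_*\circ\bigl((-)\times[\mathbb{S}_{\mathbb{R}_3}]\bigr)$, and finish with the same diagram chase (the paper defers that last step to \cite{BW1}, while you spell it out explicitly with the snake lemma). One slip to fix: the decomposition ``$D=D^{*}\cup W$'' is wrong, since $D^{*}\cup W=D^{+}$; the cover you need --- and the one the paper actually uses to make the Mayer--Vietoris comparison natural --- is $D=(D\setminus D_{\mathbb{R}})\cup V$ with $V=\{z:z\in W\ \text{or}\ \bar z\in W\}$, and then $\xi$ sends $(D\setminus D_{\mathbb{R}})\times\mathbb{S}_{\mathbb{R}_3}$ into $\Omega_{D^{*}}^4$ and $V\times\mathbb{S}_{\mathbb{R}_3}$ into $\Omega_{W}^4$. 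With that correction your compatibility $\partial\circ\mathfrak{b}=\pm\delta$ is exactly the paper's morphism of Mayer--Vietoris sequences. Your remark that the antipodal map on $\mathbb{S}_{\mathbb{R}_3}\cong S^{2}\times S^{2}$ has degree $(-1)^{2}=+1$, whence $\mathfrak{b}\circ\tau_{*}=\mathfrak{b}$, is a clean explanation of the minus sign in $\mathfrak{a}$ that the paper does not make explicit.
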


\begin{proof}
We can assume that $D^+$ is connected, and hence $\Omega_D^4$ is connected too.
We cover $D^+$ by the two open subsets $D^*$ and $W$, as in the proof of Proposition \ref{desc}. Now we consider
$$ V= \{z \in \mathbb{C}: \, z \in W \, \, \hbox{or} \, \, \bar{z} \in W\}.$$
As in \cite[Prop. 2.11]{BW1} we have the following coverings of $D$, $D \times \mathbb{S}_{\mathbb{R}_3}$ and $ \Omega_D^4$:
$$ D=(D \setminus D_{\mathbb{R}}) \cup V,$$
$$ D \times \mathbb{S}_{\mathbb{R}_{3}}= \bigl( (D \setminus D_{\mathbb{R}})\times \mathbb{S}_{\mathbb{R}_3} \bigl) \, \, \cup  \, \, (V \times \mathbb{S}_{\mathbb{R}_3}), $$
$$ \Omega_D^4=\Omega_{D^*}^4 \cup \Omega_W^4.$$
Moreover,
$$ \Omega_{W^*}^4= \Omega_{D^*}^4 \cap \Omega_W^4.$$
Via the map $\xi$ defined in the hypothesis we get a morphism between the Mayer-Vietoris sequences obtained from the previous coverings:
$$
\small
\begin{tikzcd}
... \arrow[r, ""]
& H_k\bigl( (V \setminus D_{\mathbb{R}}) \times \mathbb{S}_{\mathbb{R}_3} \bigl) \arrow[r, ""] \arrow[d, ""']
& H_k((D \setminus D_{\mathbb{R}}) \times \mathbb{S}_{\mathbb{R}_3}) \oplus H_k(V \times \mathbb{S}_{\mathbb{R}_3}) \arrow[r, ""] \arrow[d, ""]
& H_k(D \times \mathbb{S}_{\mathbb{R}_3}) \arrow[r, ""] \arrow[d, ""]
& ... \\
...\arrow[r, ""] & H_k(\Omega^4_{W^{*}}) \arrow[r, ""] & H_k(\Omega^4_{D^*}) \oplus H_k(\Omega^4_W) \arrow[r, ""] & H_k(\Omega^4_D) \arrow[r, ""] & ...
\end{tikzcd}
$$
In particular, we get
$$
\small
\begin{tikzcd}
H_5\bigl( (V \setminus D_{\mathbb{R}}) \times \mathbb{S}_{\mathbb{R}_3} \bigl) \arrow[r, ""] \arrow[d, ""']
& H_5((D \setminus D_{\mathbb{R}}) \times \mathbb{S}_{\mathbb{R}_3}) \oplus H_5(V \times \mathbb{S}_{\mathbb{R}_3}) \arrow[r, ""] \arrow[d, ""]
& H_5(D \times \mathbb{S}_{\mathbb{R}_3}) \arrow[r, ""] \arrow[d, ""]
& C \arrow[r, ""] \arrow[d, ""]
& 0 \arrow[d, ""]\\
H_5(\Omega^4_{W^{*}}) \arrow[r, ""] & H_5(\Omega^4_{D^*}) \oplus H_5(\Omega^4_W) \arrow[r, ""] & H_5(\Omega^4_D) \arrow[r, ""] & C' \arrow[r, ""] & 0
\end{tikzcd}
$$

where
$$ C= \hbox{ker} \bigl[H_4\bigl( (V \setminus D_{\mathbb{R}}) \times \mathbb{S}_{\mathbb{R}_3} \bigl) \to H_4((D \setminus D_{\mathbb{R}}) \times \mathbb{S}_{\mathbb{R}_3}) \oplus H_4(V \times \mathbb{S}_{\mathbb{R}_3}) \bigl]
$$
and
$$ C'= \hbox{ker} \bigl[ H_4(\Omega^4_{W^{*}}) \to H_4(\Omega^4_{D^*}) \oplus H_4(\Omega^4_W) \bigl].$$
Now, let us consider a domain $M \subset \mathbb{C}$. We recall that $H_{\ell}(M)=0$ for $ \ell \geq 2$. Therefore, by the K\"unneth formula we obtain
\begin{equation}
\label{hom3}
H_4(M \times \mathbb{S}_{\mathbb{R}_3}) \simeq H_{0}(M),
\end{equation}

\begin{equation}
\label{hom4}
H_{5}(M \times \mathbb{S}_{\mathbb{R}_3}) \simeq H_1(M).
\end{equation}
Now, we remark that $V \setminus D_{\mathbb{R}}$ is the disjoint union of two open subsets, i.e.
\begin{equation}
\label{n9}
V \setminus D_{\mathbb{R}}=\left(D^+ \cap (V \setminus D_{\mathbb{R}}) \right) \, \, \bigsqcup \, \, \left(D^- \cap (V \setminus D_{\mathbb{R}}) \right),
\end{equation}
where the two open sets are homotopic to $D_{\mathbb{R}}$ and $V$ is homotopic to $D_{\mathbb{R}}$. By \eqref{hom3} we have
$$ C \simeq \hbox{ker}[H_{0}(V \setminus D_{\mathbb{R}}) \to  H_0(D \setminus D_{\mathbb{R}})  \oplus H_0(V) ].$$
As proved in \cite[Prop. 2.11]{BW1} we have
$$ H_{0}(D_{\mathbb{R}}) \simeq \hbox{ker}[H_0(V \setminus D_{\mathbb{R}}) \to H_0(V)].$$
Therefore, by definition of reduced homology we have
 $$ C \simeq \widetilde{H}_0(D_{\mathbb{R}}).$$
Due to the following homotopy equivalences (see \cite[Prop. 2.5]{BW1}):
\begin{equation}
\label{n6}
\Omega_{W^*}^4 \simeq D_{\mathbb{R}} \times \mathbb{S}_{\mathbb{R}_3}, \qquad \Omega_{D^*}^4 \simeq D^+ \times \mathbb{S}_{\mathbb{R}_3}, \qquad \Omega_W^4 \simeq D_{\mathbb{R}},
\end{equation}
the formula \eqref{hom3} and the fact that $H_{k}(D_{\mathbb{R}})={0}$ for $k>0$ we have:
\begin{eqnarray*}
C'&=& \hbox{ker}[H_4(\Omega^4_{W^*}) \to H_4(\Omega^4_{D^*}) \oplus H_4(\Omega^4_W)]\\
&\simeq & \hbox{ker}[H_0(D_{\mathbb{R}}) \to H_0(D^+) \oplus H_4(D_{\mathbb{R}})]\\
& \simeq & \hbox{ker}[H_0(D_{\mathbb{R}}) \to H_0(D^+)]\\
& \simeq & \widetilde{H}_0(D_{\mathbb{R}}).
\end{eqnarray*}
Therefore
$$ C' \simeq \widetilde{H}_0(D_{\mathbb{R}}).$$
Hence by \eqref{hom4} and the previous homotopy equivalences we have $H_5(\Omega^4_{D^*}) \simeq H_1(D^+)$ and $ H_5(\Omega^4_W)=0$. Moreover by \eqref{n9} and we get
\begin{eqnarray*}
H_{5}((V\setminus D_{\mathbb{R}}) \times \mathbb{S}_{\mathbb{R}_3}) &\simeq& H_1(V \setminus D_{\mathbb{R}}) \simeq H_{1}\left(D^+ \cap (V \setminus D_{\mathbb{R}}) \right) \oplus H_{1}\left(D^- \cap (V \setminus D_{\mathbb{R}}) \right) \\
&\simeq& H_{1}(D_{\mathbb{R}}) \oplus H_{1}(D_{\mathbb{R}}) \simeq 0.
\end{eqnarray*}

Combining these facts we obtain the following commutative diagram

$$
\begin{tikzcd}
0 \arrow[r, ""] \arrow[d, ""]
& H_1(D^+) \oplus H_1(D^-) \arrow[r] \arrow[d]
& H_1(D) \arrow[r] \arrow[d]
&\widetilde{H}_0(D_{\mathbb{R}}) \arrow[r, ""] \arrow[d]
& 0 \ \arrow[d, ""]\\
0 \arrow[r, ""] & H_1(D^+) \arrow[r] & H_5(\Omega^4_D) \arrow[r] & \widetilde{H}_0(D_{\mathbb{R}}) \arrow[r, ""] & 0
\end{tikzcd}
$$
By similar computations of \cite[Prop.2.11]{BW1} we have the thesis.
\end{proof}

\begin{prop}
\label{res2b}
Let $D$ be a symmetric open subset of $ \mathbb{C}$. Then there is a natural exact sequence
\begin{equation}
\label{wseq15}
0 \to H_1(D^+) \oplus H_1(D^+) \overset{\alpha'}{\to} H_1(D) \oplus H_1(D) \overset{\beta'}{\to} H_3(\Omega_D^4) \to 0.
\end{equation}
Let $ \tau$ be the complex conjugation on $ \mathbb{C}$ and let $ \zeta: D \times \mathbb{S}^2  \to \Omega_D^4$ defined by
$$ \zeta (x+yi, I)=x+yI.$$
We observe that $I \in \mathbb{S}^2 \subset \mathbb{S}_{\mathbb{R}_3}$.
Thus, we can define $ \alpha'$ and $ \beta'$ as
$$ \alpha'(\gamma, \gamma_1)=\gamma - \tau_*(\gamma)+ \gamma_1- \tau_*(\gamma_1) =  \alpha(\gamma)+ \alpha(\gamma_1) $$
and
$$ \beta'(\gamma, \gamma_1)= (\zeta_{*}(\gamma), \zeta_{*}(\gamma_1)) \times [\mathbb{S}^2]= \left(\zeta_{*}(\gamma) \times [\mathbb{S}^2], \zeta_{*}(\gamma_1) \times [\mathbb{S}^2]\right).$$
where the map $ \alpha$ is defined in \cite[Prop. 2.11]{BW1} and
$[\mathbb{S}^2] \in H_2(\mathbb{S}^2)$ is the fundamental class of $ \mathbb{S}^2$.
\end{prop}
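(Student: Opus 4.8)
The statement is the ``degree three'' counterpart of Proposition~\ref{res21}: where that result describes $H_5(\Omega_D^4)$ using the top fundamental class of $\mathbb{S}_{\mathbb{R}_3}$ in $H_4(\mathbb{S}_{\mathbb{R}_3})\simeq\mathbb{Z}$, here we must describe $H_3(\Omega_D^4)$ using $H_2(\mathbb{S}_{\mathbb{R}_3})\simeq\mathbb{Z}\oplus\mathbb{Z}$ (Proposition~\ref{split3}). It is precisely this rank-two group that forces the two copies of $H_1(D^+)$ and of $H_1(D)$ in \eqref{wseq15}. The plan is therefore to run the proof of Proposition~\ref{res21} almost verbatim, only with the homological degrees $5,4$ replaced by $3,2$; the final identification of $\beta'$ then uses that $H_2(\mathbb{S}_{\mathbb{R}_3})$ is generated by the classes of two embedded $2$-spheres, say $\mathbb{S}^2_1=\mathbb{S}_{\mathbb{R}_2}\times\{J_0\}$ and $\mathbb{S}^2_2=\{J_0\}\times\mathbb{S}_{\mathbb{R}_2}$ under Proposition~\ref{split2}, over each of which $\xi$ restricts to the map $\zeta$ of the statement; equivalently, one may use the splitting $\Omega_D^4\cong\Omega_D^2(\mathbb{R}_2)\oplus\Omega_D^2(\mathbb{R}_2)$ of Proposition~\ref{np} and reduce to \cite[Prop.~2.11]{BW1} applied to each factor.

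Concretely, I would first reduce to the case where $D^+$, hence $\Omega_D^4$, is connected (the general case following by direct sums over connected components). Then I would reuse the coverings $D^+=D^*\cup W$, $V=\{z:z\in W\text{ or }\bar z\in W\}$, $D=(D\setminus D_\mathbb{R})\cup V$, $\Omega_D^4=\Omega_{D^*}^4\cup\Omega_W^4$ with $\Omega_{W^*}^4=\Omega_{D^*}^4\cap\Omega_W^4$, and the comparison map $\xi\colon D\times\mathbb{S}_{\mathbb{R}_3}\to\Omega_D^4$, $\xi(x+yi,J)=x+yJ$, exactly as in the proof of Proposition~\ref{res21}. The relevant K\"unneth computations are now: for a planar domain $M$ (so $H_\ell(M)=0$ for $\ell\ge2$) one has $H_3(M\times\mathbb{S}_{\mathbb{R}_3})\simeq H_2(\mathbb{S}_{\mathbb{R}_3})\otimes H_1(M)\simeq H_1(M)\oplus H_1(M)$ and $H_2(M\times\mathbb{S}_{\mathbb{R}_3})\simeq H_2(\mathbb{S}_{\mathbb{R}_3})\otimes H_0(M)\simeq H_0(M)\oplus H_0(M)$; combining these with the homotopy equivalences $\Omega_{W^*}^4\simeq D_\mathbb{R}\times\mathbb{S}_{\mathbb{R}_3}$, $\Omega_{D^*}^4\simeq D^+\times\mathbb{S}_{\mathbb{R}_3}$, $\Omega_W^4\simeq D_\mathbb{R}$ (as in the proofs of Propositions~\ref{desc} and \ref{res21}) and the vanishings $H_k(D_\mathbb{R})=0$ for $k>0$, $H_k(D^+)=0$ for $k>1$, the morphism of Mayer--Vietoris sequences induced by $\xi$ collapses, in degrees $3$ and $2$, to a commutative ladder with exact rows
$$
\begin{tikzcd}[column sep=small]
0 \arrow[r] & \bigl(H_1(D^+)\oplus H_1(D^-)\bigr)^{2} \arrow[r] \arrow[d] & H_1(D)^{2} \arrow[r] \arrow[d, "\beta'"] & \widehat{H}_0(D_\mathbb{R})^{2} \arrow[r] \arrow[d, "\mathrm{id}"] & 0 \\
0 \arrow[r] & H_1(D^+)^{2} \arrow[r] & H_3(\Omega_D^4) \arrow[r] & \widehat{H}_0(D_\mathbb{R})^{2} \arrow[r] & 0
\end{tikzcd}
$$
whose top row is two copies of the exact sequence of Lemma~\ref{res2} and whose bottom row is \eqref{sec2b}.

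A diagram chase — the snake lemma applied to this ladder, as at the end of the proof of Proposition~\ref{res21} — then shows that $\beta'$ is surjective and that $\ker\beta'$ is the image in $H_1(D)^2$ of the kernel of the left vertical arrow; computing that kernel identifies it with $\alpha'\bigl(H_1(D^+)^2\bigr)$, and injectivity of $\alpha'$ follows from that of the map $\mathfrak{a}$ of Proposition~\ref{res21}. Finally, restricting the comparison map $\xi$ to the subproducts $D\times\mathbb{S}^2_i$ identifies the surjection $\beta'$ with $(\gamma,\gamma_1)\mapsto\bigl(\zeta_{*}(\gamma)\times[\mathbb{S}^2],\,\zeta_{*}(\gamma_1)\times[\mathbb{S}^2]\bigr)$ under the decomposition $H_3(\Omega_D^4)\cong H_3(\Omega_D^2(\mathbb{R}_2))\oplus H_3(\Omega_D^2(\mathbb{R}_2))$, giving the maps in the statement. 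The main obstacle is precisely this last point: one must compute the connecting maps of the comparison ladder \emph{explicitly}, tracking how $\xi$ and complex conjugation act on the two generators of $H_2(\mathbb{S}_{\mathbb{R}_3})$ and how these assemble into $H_3(\Omega_D^4)$. This is where the argument genuinely differs from a mechanical copy of the degree-five case, since $H_2$ is not the top homology of $\mathbb{S}_{\mathbb{R}_3}$, so the antipodal map $J\mapsto -J$ no longer acts trivially on the relevant sphere classes and the signs must be handled with care; everything else is a routine transcription of the proofs of Proposition~\ref{res21} and of \cite[Prop.~2.11]{BW1}.
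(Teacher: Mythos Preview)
Your proposal is correct and follows essentially the same route as the paper. The only presentational difference is that the paper sets up the comparison map directly as two copies of $\zeta\colon D\times\mathbb{S}^2\to\Omega_D^4$ (so the doubled terms in the top row of the ladder appear by fiat), whereas you use the single map $\xi\colon D\times\mathbb{S}_{\mathbb{R}_3}\to\Omega_D^4$ and let the K\"unneth isomorphism $H_3(M\times\mathbb{S}_{\mathbb{R}_3})\simeq H_1(M)\otimes H_2(\mathbb{S}_{\mathbb{R}_3})\simeq H_1(M)^2$ produce the doubling; these are equivalent since the two embedded $2$-spheres generate $H_2(\mathbb{S}_{\mathbb{R}_3})$. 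The paper also writes out the diagram chase for surjectivity of $\beta'$ and the equality $\operatorname{Im}\alpha'=\ker\beta'$ explicitly rather than invoking the snake lemma, and computes the left vertical map $\rho_1(c_1,c_2,c_3,c_4)=(c_1+\tau_*c_2,\,c_3+\tau_*c_4)$ directly from the geometry, which absorbs the sign issue you flag concerning the antipodal action on $H_2(\mathbb{S}_{\mathbb{R}_3})$.
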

\begin{proof}
As in the previous proposition we assume that $D^+$ is connected, hence also $\Omega_D^4$ is connected. As in \cite[Prop. 2.11]{BW1} we can cover $D \times \mathbb{S}^2$ in the following way
$$ D \times \mathbb{S}^2= \bigl( (D \setminus D_{\mathbb{R}})\times \mathbb{S}^2 \bigl) \, \, \cup  \, \, (V \times \mathbb{S}^2),$$
where $V= \{z \in \mathbb{C}: \, z \in W \, \, \hbox{or} \, \, \bar{z} \in W\}$. Moreover we recall that
$$ D=(D \setminus D_{\mathbb{R}}) \cup V ,$$
and
$$ \Omega_D^4=\Omega_{D^*}^4 \cup \Omega_W^4.$$	
Furthermore,
$$ \Omega_{W^*}^4= \Omega_{D^*}^4 \cap \Omega_{W}^4.$$
By the map  $ \zeta: D \times \mathbb{S}^2 \to \Omega_D^4$ given by
$$ \zeta (x+yi, I)=x+yI$$
we get the following morphism between the respective Mayer-Vietoris sequences	

$$
\begin{tikzcd}
... \arrow[r, ""]
& A \oplus A \arrow[r, ""] \arrow[d, ""']
& B  \oplus  C \oplus B \oplus C \arrow[r, ""] \arrow[d, ""]
& E \oplus E \arrow[r] \arrow[d]
& ...\\
...\arrow[r, ""] & H_k(\Omega^4_{W^{*}}) \arrow[r, ""] & H_k(\Omega_{D^*}^4) \oplus H_k(\Omega^4_W) \arrow[r, ""] & H_k(\Omega^4_{D}) \arrow[r, ""] & ...
\end{tikzcd}
$$

where
$$ A:= H_k\bigl( (V \setminus D_{\mathbb{R}}) \times \mathbb{S}^2 \bigl),$$
$$ B:=H_k((D \setminus D_{\mathbb{R}}) \times \mathbb{S}^2),$$
$$C:=H_k(V \times \mathbb{S}^2),$$
$$E:=H_k\bigl( D \times \mathbb{S}^2 \bigl).$$

In particular we obtain
$$
\begin{tikzcd}
A' \oplus A' \arrow[r, ""] \arrow[d, ""']
& B' \oplus C' \oplus B' \oplus C' \arrow[r, ""] \arrow[d, ""']
& E' \oplus E' \arrow[r, ""] \arrow[d, ""']
& F \arrow[r, ""] \arrow[d, ""']
& 0  \arrow[d, ""'] \\
H_3(\Omega^4_{W^{*}}) \arrow[r, ""] & H_3(\Omega^4_{D^*}) \oplus H_3(\Omega^4_W) \arrow[r, ""] & H_3(\Omega^4_D) \arrow[r, ""] & F' \arrow[r, ""] & 0
\end{tikzcd}
$$

where
$$ A':= H_3\bigl( (V \setminus D_{\mathbb{R}}) \times \mathbb{S}^2 \bigl),$$
$$ B':=H_3((D \setminus D_{\mathbb{R}}) \times \mathbb{S}^2),$$
$$C':=H_3(V \times \mathbb{S}^2),$$
$$E':=H_3\bigl( D \times \mathbb{S}^2 \bigl),$$
and
\begin{eqnarray*}
F &=& \hbox{ker}  \bigl[H_2\bigl( (V \setminus D_{\mathbb{R}}) \times \mathbb{S}^2 \bigl) \oplus H_2\bigl( (V \setminus D_{\mathbb{R}}) \times \mathbb{S}^2 \bigl) \to H_2((D \setminus D_{\mathbb{R}}) \times \mathbb{S}^2) \oplus H_2(V \times \mathbb{S}^2) \oplus\\
&& \oplus H_2((D \setminus D_{\mathbb{R}}) \times \mathbb{S}^2) \oplus H_2(V \times \mathbb{S}^2)\bigl],
\end{eqnarray*}
$$ F'= \hbox{ker} \bigl[ H_2(\Omega^4_{W^{*}}) \to H_2(\Omega^4_{D^*}) \oplus H_2(\Omega^4_W) \bigl].$$
Now, for any domain $M \subset \mathbb{C}$, by the K\"unneth formula and dimensional reasons we have

\begin{equation}
\label{hom7}
H_2(M \times \mathbb{S}^2)= H_0(M).
\end{equation}

We recall that
\begin{equation}
\label{n10}
V \setminus D_{\mathbb{R}}=\bigl(D^+ \cap (V \setminus D_{\mathbb{R}}) \bigl) \, \, \bigsqcup \, \, (D^- \cap (V \setminus D_{\mathbb{R}}) \bigl),
\end{equation}
where the two open sets are homotopic to $D_{\mathbb{R}}$ and $V$ is homotopic to $D_{\mathbb{R}}$.
Hence by \eqref{hom7}
$$ F \simeq \hbox{ker}[H_0(V \setminus D_{\mathbb{R}}) \oplus H_0(V \setminus D_{\mathbb{R}}) \to H_0(D \setminus D_{\mathbb{R}})\oplus H_0(D \setminus D_{\mathbb{R}}) \oplus H_0(V) \oplus H_0(V)].$$
Now, we want to prove that
\begin{equation}
\label{hom8}
H_0(D_{\mathbb{R}}) \oplus H_0(D_{\mathbb{R}}) \simeq \hbox{ker}[H_0(V \setminus D_\mathbb{R}) \oplus H_0(V \setminus D_{\mathbb{R}}) \to H_0(V) \oplus H_0(V) ].
\end{equation}
Let
$$ H_0(D_{\mathbb{R}}) \oplus H_0(D_{\mathbb{R}}) \ni \theta + \lambda= \sum_{j} \eta_j \{p_j \} + \sum_{\ell} \mu_{\ell} \{ \psi_\ell \},$$
where $ p_j, \psi_\ell \in D_\mathbb{R}$. For a sufficiently small $ \varepsilon$ we have
\begin{eqnarray*}
H_0(D_{\mathbb{R}}) \oplus H_0(D_{\mathbb{R}})& \ni & \theta+ \lambda \mapsto \biggl( \sum_{j} \eta_j( \{p_j- \varepsilon \}- \{p_j+ \varepsilon \})+ \sum_{\ell} \mu_{\ell}( \{ \psi_\ell- \varepsilon \}- \{ \psi_\ell+ \varepsilon \}) \biggl)\\
& \in & \hbox{ker}[H_0(V \setminus D_\mathbb{R}) \oplus H_0(V \setminus D_{\mathbb{R}}) \to H_0(V) \oplus H_0(V) ].
\end{eqnarray*}
Thus, we prove \eqref{hom8}.
\\Let $ \eta + \mu=\sum_{j} \eta_j( \{p_j- \varepsilon \}- \{p_j+ \varepsilon \})+ \sum_{\ell} \mu_{\ell}( \{ \psi_\ell- \varepsilon \}- \{ \psi_\ell+ \varepsilon \}) \in \hbox{ker}[H_0(V \setminus D_\mathbb{R}) \oplus H_0(V \setminus D_{\mathbb{R}}) \to H_0(V) \oplus H_0(V) ]$. We can describe the homomorphism to $H_0(D \setminus D_ {\mathbb{R}}) \oplus H_0(D \setminus D_{\mathbb{R}})$ as
$$ \eta + \mu \mapsto \biggl( \sum_j \eta_j+ \sum_\ell \mu_\ell, -\sum_j \eta_j - \sum_\ell \mu_\ell \biggl) \in \mathbb{Z}^2 \oplus \mathbb{Z}^2 \simeq H_{0}(D \setminus D_{\mathbb{R}}) \oplus H_{0}(D \setminus D_{\mathbb{R}}).$$
Therefore
$$ F \simeq \widetilde{H}_0(D_\mathbb{R}) \oplus \widetilde{H}_0(D_\mathbb{R}).$$
For any domain $M \subset \mathbb{C}$, by the K\"unneth formula and Proposition \ref{split3} we have
\begin{equation}
\label{hom7p}
H_2(M \times \mathbb{S}_{\mathbb{R}_3})=H_0(M) \oplus H_0(M).
\end{equation}
By the same homotopy equivalences used in Proposition \ref{res21} (see \eqref{n6}), formula \eqref{hom7p} and the fact $H_{k}(D_{\mathbb{R}})={0}$ for $k>0$ we have
\begin{eqnarray*}
F' &\simeq& \hbox{ker}[H_2(\Omega^4_{W^*}) \to H_2(\Omega^4_{D^*}) \oplus H_2(\Omega^4_W)] \\
&\simeq & \hbox{ker}[H_0(D_\mathbb{R}) \oplus H_0(D_\mathbb{R}) \to H_0(D^+) \oplus H_0(D^+)] \\
&\simeq& \widetilde{H}_0(D_\mathbb{R}) \oplus \widetilde{H}_0(D_\mathbb{R}).
\end{eqnarray*}
Therefore
$$ F' \simeq \widetilde{H}_0(D_\mathbb{R}) \oplus \widetilde{H}_0(D_\mathbb{R}).$$
Moreover by \eqref{n10} we get
\begin{eqnarray*}
H_{3}((V\setminus D_{\mathbb{R}}) \times \mathbb{S}_{\mathbb{R}_3}) &\simeq& H_1(V \setminus D_{\mathbb{R}}) \oplus H_1(V \setminus D_{\mathbb{R}}) \simeq H_{1}\left(D^+ \cap (V \setminus D_{\mathbb{R}}) \right) \oplus H_{1}\left(D^- \cap (V \setminus D_{\mathbb{R}}) \right) \oplus \\
&& \oplus H_{1}\left(D^+ \cap (V \setminus D_{\mathbb{R}}) \right) \oplus H_{1}\left(D^- \cap (V \setminus D_{\mathbb{R}}) \right) \simeq H_{1}(D_{\mathbb{R}}) \oplus H_{1}(D_{\mathbb{R}}) \oplus\\
&& \oplus H_{1}(D_{\mathbb{R}}) \oplus H_{1}(D_{\mathbb{R}})  \simeq 0.
\end{eqnarray*}

Putting together all these facts we obtain

$$
\footnotesize
\begin{tikzcd}
0 \arrow[r, ""] \arrow[d, ""']
& H_1(D^+) \oplus H_1(D^-) \oplus H_1(D^+) \oplus H_1(D^-) \arrow[r, "\eta_1"] \arrow[d, "\rho_1"']
& H_1(D) \oplus H_1(D) \arrow[r, "\eta_2"] \arrow[d, "\rho_2"]
& \widetilde{H}_0(D_\mathbb{R}) \oplus \widetilde{H}_0(D_\mathbb{R}) \arrow[r, ""] \arrow[d, "\rho_3=Id"]
& 0 \ \arrow[d, ""]\\
0 \arrow[r, ""] & H_1(D^+) \oplus H_1(D^+) \arrow[r, "\mu_1"] & H_3(\Omega^4_D) \arrow[r, "\mu_2"] &  \widetilde{H}_0(D_\mathbb{R}) \oplus \widetilde{H}_0(D_\mathbb{R}) \arrow[r, ""] & 0
\end{tikzcd}
$$

The homomorphism $ \rho_1$ is induced by the embedding
$$(D \setminus D_\mathbb{R}) \oplus (D \setminus D_\mathbb{R})=(D^+ \cup D^- ) \oplus (D^+ \cup D^-) \to \Omega^4_{D^*},$$
and
$$ H_3(\Omega^4_{D^*}) \simeq H_3(D^+ \times \mathbb{S}_{\mathbb{R}_3}) \simeq H_1(D^+) \oplus H_1(D^+).$$
Thus we can define
$$ \rho_1(c_1, c_2, c_3, c_4)=(c_1+\tau_*c_2, c_3+ \tau_*c_4)$$
where $c_1$ and $c_3$ are 1-cycles in $D^+$ and $c_2, c_4$ are 1-cycles in $D^-$. In particular $\rho_1$ is surjective and $ \hbox{ker} (\rho_1)= \{(c, -\tau^*c,c', -\tau_*c'): \, \, c, c' \in H_{1}(D^+) \}$. Moreover, for any domain $M \subset \mathbb{C}$ we have
\begin{equation}
H_3(M \times \mathbb{S}^2) \simeq H_1(M).
\end{equation}
This implies that $ \rho_2$ is defined by
$$ \rho_2: H_1(D) \oplus H_1(D) \simeq H_3(D \times \mathbb{S}^2) \oplus H_3(D \times \mathbb{S}^2) \overset{\zeta_*}{\to} H_3(\Omega^4_D).$$
We set $ \beta'= \rho_2$ and define $ \alpha'(c,c')= \eta_1(c, -\tau^*c,c', -\tau_*c')$. Due to the exactness of the sequence, $ \eta_1$ is injective, so $ \alpha'$ is injective too.
\\ Now, we prove that $ \beta'$ is surjective.
\\ Let $s \in H_3(\Omega^4_D)$, since $ \rho_3$ is an isomorphism we can find a couple $(b,b_1) \in H_1(D) \oplus H_1(D)$ with $ \eta_2(b, b_1)= \mu_2(s)$. Then, by the commutative diagram
$$ \mu_2 \bigl(s- \rho_2(b,b_1) \bigl)= \mu_2(s)- \mu_2( \rho_2(b,b_1))= \mu_2(s)- \eta_2(b,b_1)=0,$$
then by the exactness of the sequence we get $ s- \rho_2(b,b_1) \in \hbox{ker}(\mu_2)= \hbox{Im} (\mu_1)$.
Since $ \rho_1$ is surjective there exists $(a, a_1) \in H_1(D^+) \oplus H_1(D^-) \oplus H_1(D^+) \oplus H_1(D^-)$ such that
$$ s- \rho_2(b,b_1)= \mu_1 \bigl(\rho_1(a, a_1) \bigl)= \rho_2\bigl(\eta_1(a, a_1) \bigl).$$
Thus, $s= \rho_2[(b,b_1)+ \eta_1(a,a_1)]$. This means that $ \beta'$ is surjective.
\\ In order to prove the exactness of the sequence \eqref{wseq15} we have to prove that $ \hbox{Im} (\alpha')= \hbox{ker} (\beta')$. We show the equality by double inclusion. Firstly we demonstrate $ \hbox{Im} (\alpha') \subseteq \hbox{ker} (\beta')$. By the commutative diagram, the previous definitions of $\alpha'$ and $ \rho_1$ and the fact that $ \tau_* \tau_*=Id$ we obtain
\begin{eqnarray*}
\beta'(\alpha'(b,b_1)) &=& \rho_2(\alpha'(b,b_1))= \rho_2(\eta_1(b,-\tau_* b, b_1,-\tau_* b_1)) \\
&=& \mu_1(\rho_1(b,-\tau_* b, b_1,-\tau_* b_1))\\
&=& \mu_1(b-\tau_* \tau_* b, b_1-\tau_* \tau_* b_1)\\
&=& \mu_1(0,0)=0.
\end{eqnarray*}
Now, let us prove that $ \hbox{ker}( \beta') \subseteq \hbox{Im} (\alpha')$. Let us assume that the couple $(b,b_1) \in \hbox{ker} (\beta')$, so $ \beta'(b,b_1)=\rho_2(b,b_1)=0$. By the commutative diagram and the fact that $\rho_3=Id$ we have
$$ \mu_2(\rho_2(b,b_1))= \rho_3(\eta_2(b,b_1))= \eta_2(b,b_1).$$
On the other side
$ \mu_2(\rho_2(b,b_1))= \mu_2(0)=0$.
Thus $ \eta_2(b,b_1)=0$. This means by the exactness of the sequence that $(b,b_1) \in \hbox{ker} (\eta_2)= \hbox{Im} (\eta_1)$. Then there exists $(c', c'', c''', c^{iv}) \in H_1(D^+) \oplus H_1(D^-) \oplus H_1(D^+) \oplus H_1(D^-)$ such that 
\begin{equation}
\label{n7}
\eta_1(c', c'', c''', c^{iv})=(b,b_1).
\end{equation}
Due to the exactness of the sequence, $\mu_1$ is injective; this implies that $\rho_1(c', c'', c''', c^{iv})=0$ because
$$ \mu_1(\rho_1(c', c'', c''', c^{iv}))= \rho_2(\eta_1(c', c'', c''', c^{iv}))= \rho_2(b,b_1)=0.$$
Hence
$$ 0= \rho_1(c', c'', c''', c^{iv})=(c'+\tau_*(c''), c'''+ \tau_*(c^{iv})),$$
so we have $c'=- \tau_* c''$ and $c'''=- \tau_{*}c^{iv}$ . Therefore by \eqref{n7} we have
$$ (b,b_1)= \eta_1(c', -\tau_{*} c',c''', -\tau_{*} c''')= \alpha'(c',c''').$$
This implies $ (b,b_1) \in \hbox{Im} (\alpha')$.
\end{proof}
\begin{Cor}
\label{res4}
Let $D \subset D_1$ be symmetric open subsets in $ \mathbb{C}$ such that $H_1(\Omega_D^4) \to H_1(\Omega_{D_1}^4)$ and $H_5(\Omega_D^4) \to H_5(\Omega_{D_1}^4)$ are injective simultaneously. Then $H_1(D) \to H_1(D_1)$ is injective.
\end{Cor}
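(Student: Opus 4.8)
The plan is to deduce the statement by comparing, along the inclusions $D\subset D_1$ and $D^+\subset D_1^+$, the natural short exact sequences furnished by Proposition \ref{res21}, and then applying the injective version of the four lemma. Every map occurring in Proposition \ref{res21} — namely $\mathfrak a(\gamma)=\gamma-\tau_*\gamma$ and $\mathfrak b(\gamma)=\xi_*(\gamma\times[\mathbb{S}_{\mathbb{R}_3}])$ — commutes with the homomorphisms induced by inclusions of symmetric open sets (for $\mathfrak b$ this is because the maps $\xi\colon D\times\mathbb{S}_{\mathbb{R}_3}\to\Omega_D^4$, $\xi(x+yi,J)=x+yJ$, are themselves compatible with the inclusions $D\subset D_1$ and $\Omega_D^4\subset\Omega_{D_1}^4$). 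Hence one obtains a commutative ladder with exact rows
$$
\begin{tikzcd}
0 \arrow[r] & H_1(D^+) \arrow[r, "\mathfrak a"] \arrow[d] & H_1(D) \arrow[r, "\mathfrak b"] \arrow[d, "i_*"] & H_5(\Omega_D^4) \arrow[r] \arrow[d] & 0 \\
0 \arrow[r] & H_1(D_1^+) \arrow[r, "\mathfrak a_1"] & H_1(D_1) \arrow[r, "\mathfrak b_1"] & H_5(\Omega_{D_1}^4) \arrow[r] & 0
\end{tikzcd}
$$
in which the three vertical maps are induced by the corresponding inclusions.

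Next I would identify the two outer vertical arrows. The rightmost one is $i_*\colon H_5(\Omega_D^4)\to H_5(\Omega_{D_1}^4)$, injective by hypothesis. For the leftmost one, recall from \eqref{sec3b} (equivalently \eqref{sec3} in the connected case) that $H_1(\Omega_D^4)\cong H_1(D^+)$; a representative of this isomorphism is the map sending a $1$-cycle $\gamma$ in $D^+$ to the $1$-cycle $t\mapsto\mathrm{Re}(\gamma(t))+\mathrm{Im}(\gamma(t))\,J_0$ in $\Omega_D^4$ — that is, the restriction of $\xi$ to $D^+\times\{J_0\}$, which is the composite of $D^+\times\{J_0\}\hookrightarrow D^+\times\mathbb{S}_{\mathbb{R}_3}$ with the homotopy equivalence $D^+\times\mathbb{S}_{\mathbb{R}_3}\simeq\Omega_{D^*}^4$ and the inclusion $\Omega_{D^*}^4\hookrightarrow\Omega_D^4$, and is an isomorphism on $H_1$ because $H_1(\mathbb{S}_{\mathbb{R}_3})=0$ by Proposition \ref{split3}. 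For a fixed $J_0\in\mathbb{S}_{\mathbb{R}_3}$ this map is defined uniformly in $D$, so it is natural with respect to $D\subset D_1$; therefore the leftmost vertical arrow $H_1(D^+)\to H_1(D_1^+)$ is identified, via these isomorphisms, with $i_*\colon H_1(\Omega_D^4)\to H_1(\Omega_{D_1}^4)$, which is injective by hypothesis, whence $H_1(D^+)\to H_1(D_1^+)$ is injective as well.

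With both outer verticals injective, the conclusion follows by a diagram chase. Let $c\in H_1(D)$ with $i_*c=0$. Commutativity of the right square gives $i_*\mathfrak b(c)=\mathfrak b_1(i_*c)=0$, hence $\mathfrak b(c)=0$ by injectivity of the right vertical, hence $c=\mathfrak a(c')$ for a unique $c'\in H_1(D^+)$. Commutativity of the left square then yields $\mathfrak a_1(\overline{c'})=i_*\mathfrak a(c')=i_*c=0$, where $\overline{c'}$ denotes the image of $c'$ in $H_1(D_1^+)$; since $\mathfrak a_1$ is injective, $\overline{c'}=0$, and since $H_1(D^+)\to H_1(D_1^+)$ is injective, $c'=0$. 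Therefore $c=\mathfrak a(0)=0$, proving that $i_*\colon H_1(D)\to H_1(D_1)$ is injective. Note that both hypotheses are genuinely used: the injectivity of $i_*$ on $H_5$ forces $\mathfrak b(c)=0$, and the injectivity of $i_*$ on $H_1$ (transported to $H_1(D^+)$) forces $c'=0$.

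The step requiring the most care is the \textbf{naturality} of the identification $H_1(\Omega_D^4)\cong H_1(D^+)$: the Mayer–Vietoris arguments in Propositions \ref{desc} and \ref{res21} rest on the cover $\Omega_D^4=\Omega_{D^*}^4\cup\Omega_W^4$, and the collar $W$ depends on the distance to $\partial D$, hence changes when $D$ is enlarged to $D_1$, so one cannot compare the two Mayer–Vietoris sequences term by term. This is precisely why the isomorphism should be described by the cover-free cycle-level formula $\gamma\mapsto\bigl(t\mapsto\mathrm{Re}\,\gamma(t)+\mathrm{Im}\,\gamma(t)\,J_0\bigr)$ (equivalently, $\xi$ restricted to the $J_0$-slice), which is visibly compatible with inclusions; once this is granted, the remainder is the formal content of the four lemma applied to the ladder above.
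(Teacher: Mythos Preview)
Your proof is correct and follows essentially the same route as the paper: build the commutative ladder coming from Proposition~\ref{res21} for $D$ and $D_1$, identify the left vertical $H_1(D^+)\to H_1(D_1^+)$ with $H_1(\Omega_D^4)\to H_1(\Omega_{D_1}^4)$ via Proposition~\ref{desc}, and conclude by the snake lemma (your four-lemma diagram chase is the same argument). The paper's proof is terser---it simply writes $H_1(\Omega_D^4)$ in place of $H_1(D^+)$ in the ladder and invokes the snake lemma---whereas you devote a paragraph to justifying the naturality of the identification $H_1(\Omega_D^4)\cong H_1(D^+)$ by a cover-free cycle-level description; this extra care is well placed, since the paper silently assumes that naturality.
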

\begin{proof}
From Proposition \ref{desc} we know that
$$ H_1(\Omega_D^4) \simeq H_1(D^+),$$
$$ H_1(\Omega_{D_1}^4) \simeq H_1(D^+_1).$$
By the inclusion $D \hookrightarrow D_1$ and Proposition \ref{res21} we get the following commutative diagram
$$
\begin{tikzcd}
0 \arrow[r, ""] \arrow[d, ""]
& H_1(\Omega_D^4) \arrow[r, ""] \arrow[d, "f_1"']
& H_1(D) \arrow[r, ""] \arrow[d, ""]
& H_5(\Omega_D^4) \arrow[r, ""] \arrow[d, "f_2"]
& 0 \ \arrow[d, ""]\\
0 \arrow[r, ""] & H_1(\Omega_{D_1}^4) \arrow[r, ""] & H_1(D_1) \arrow[r, ""] & H_5(\Omega_{D_1}^4) \arrow[r, ""] & 0
\end{tikzcd}
$$
Since $f_1$ and $f_2$ are injective by hypothesis, the snake lemma yields the thesis.
\end{proof}

\begin{Cor}
\label{res41}
Let $D \subset D_1$ be symmetric open subsets in $ \mathbb{C}$ such that $H_1(\Omega_D^4) \to H_1(\Omega_{D_1}^4)$ and $H_3(\Omega_D^4) \to H_3(\Omega_{D_1}^4)$ are injective simultaneously. Then $H_1(D) \to H_1(D_1)$ is injective.
\end{Cor}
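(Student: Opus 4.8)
The plan is to imitate the proof of Corollary \ref{res4}, using Proposition \ref{res2b} in place of Proposition \ref{res21}. First I would recall from the short exact sequence \eqref{sec3} of Proposition \ref{desc} that the inclusion-induced isomorphisms $H_1(\Omega_D^4)\simeq H_1(D^+)$ and $H_1(\Omega_{D_1}^4)\simeq H_1(D_1^+)$ hold and are natural with respect to the inclusion $i:D\hookrightarrow D_1$; hence $H_1(\Omega_D^4)\oplus H_1(\Omega_D^4)\simeq H_1(D^+)\oplus H_1(D^+)$ compatibly with inclusions. Applying Proposition \ref{res2b} to both $D$ and $D_1$ and using the functoriality of the constructions defining $\alpha'$ and $\beta'$ — they are assembled from the complex conjugation $\tau$ and the map $\zeta(x+yi,I)=x+yI$, both of which commute with $i$ — I obtain the commutative ladder of short exact sequences
$$
\begin{tikzcd}
0 \arrow[r] & H_1(\Omega_D^4)\oplus H_1(\Omega_D^4) \arrow[r] \arrow[d, "f_1\oplus f_1"'] & H_1(D)\oplus H_1(D) \arrow[r] \arrow[d, "i_*\oplus i_*"] & H_3(\Omega_D^4) \arrow[r] \arrow[d, "f_2"] & 0 \\
0 \arrow[r] & H_1(\Omega_{D_1}^4)\oplus H_1(\Omega_{D_1}^4) \arrow[r] & H_1(D_1)\oplus H_1(D_1) \arrow[r] & H_3(\Omega_{D_1}^4) \arrow[r] & 0
\end{tikzcd}
$$
where $f_1:H_1(\Omega_D^4)\to H_1(\Omega_{D_1}^4)$ and $f_2:H_3(\Omega_D^4)\to H_3(\Omega_{D_1}^4)$ are the maps induced by $i$.

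By hypothesis $f_1$ and $f_2$ are injective, hence so are $f_1\oplus f_1$ and $f_2$. The snake lemma (equivalently, the four lemma) applied to the ladder then forces the middle vertical arrow $i_*\oplus i_*:H_1(D)\oplus H_1(D)\to H_1(D_1)\oplus H_1(D_1)$ to be injective. Since a direct sum of two copies of a group homomorphism is injective if and only if the homomorphism itself is injective, I conclude that $i_*:H_1(D)\to H_1(D_1)$ is injective, which is the claim.

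The only genuinely delicate point is the naturality of the exact sequence of Proposition \ref{res2b}, i.e. the commutativity of the ladder above. This is handled exactly as in the proof of that proposition: the Mayer--Vietoris sequences for the coverings of $D\times\mathbb{S}^2$, of $D$, and of $\Omega_D^4$ are functorial; the open coverings chosen for $D$ are carried into those chosen for $D_1$ by the inclusion; and the identifications of the kernel terms with the reduced homology groups $\widetilde{H}_0$ and of $H_3(\Omega_{D^*}^4)$ with $H_1(D^+)\oplus H_1(D^+)$ are all compatible with inclusions. Everything remaining is formal homological algebra, precisely as in Corollary \ref{res4}; I do not expect any further obstacle.
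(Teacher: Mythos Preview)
Your proof is correct and follows essentially the same approach as the paper: both apply Proposition \ref{res2b} to $D$ and $D_1$, identify the left-hand terms $H_1(D^+)\oplus H_1(D^+)$ with $H_1(\Omega_D^4)\oplus H_1(\Omega_D^4)$ via Proposition \ref{desc}, and then invoke the snake lemma on the resulting ladder to obtain injectivity of $H_1(D)\oplus H_1(D)\to H_1(D_1)\oplus H_1(D_1)$, hence of $H_1(D)\to H_1(D_1)$. Your additional remarks on naturality (via the explicit descriptions of $\alpha'$ and $\beta'$ in terms of $\tau$ and $\zeta$) are a welcome clarification of a point the paper leaves implicit.
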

\begin{proof}
By the inclusion $D \hookrightarrow D_1$ and Proposition \ref{res2b} we have the following commutative diagram
$$
\begin{tikzcd}
0 \arrow[r, ""] \arrow[d, ""']
& H_1(D^+) \oplus H_1(D^+) \arrow[r, ""] \arrow[d, "g_1"]
& H_1(D) \oplus H_1(D) \arrow[r, ""] \arrow[d, ""]
& H_3(\Omega_D^4) \arrow[r, ""] \arrow[d, "g_2"]
& 0 \arrow[d, ""]\\
0 \arrow[r, ""] & H_1(D^+_1) \oplus H_1(D^+_1) \arrow[r, ""] & H_1(D_1) \oplus H_1(D_1) \arrow[r, ""] & H_3(\Omega_{D_1}^4) \arrow[r, ""] & 0
\end{tikzcd}
$$
By Propositon \ref{desc} we have that $ H_1(D^+) \simeq H_1(\Omega_D^4)$, $  H_1(D^+_1) \simeq H_1(\Omega_{D_1}^4)$. Hence $H_1(D^+) \oplus H_1(D^+) \to H_1(D^+_1) \oplus H_1(D^+_1)$ is injective beacuse by hypothesis $H_1(\Omega_D^4) \to H_1(\Omega_{D_1}^4)$ is injective. Finally, we get the thesis by the snake lemma, since $g_1$ and $g_2$ are injective.
\end{proof}

\begin{lemma}
\label{res5}
Let $K$ be a symmetric compact connected subset of $ \mathbb{C}$ such that $K \cap \mathbb{R} \neq \emptyset$ and connected. Let $K'$ be a non-empty symmetric closed subset of $K$ and define
$$ D:= \mathbb{C} \setminus K,$$
$$ D_1:= \mathbb{C} \setminus K'.$$
Then $H_5(\Omega_D^4) \to H_5(\Omega_{D_1}^4)$ is injective.
\end{lemma}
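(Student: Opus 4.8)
The plan is to read the statement off the natural short exact sequence \eqref{sec1b} of Corollary \ref{result1},
$$ 0 \to H_1(D^+) \to H_5(\Omega_D^4) \overset{\pi}{\to} \widehat{H}_0(D_{\mathbb{R}}) \to 0, $$
applied both to $D$ and to $D_1$. Since the inclusion $D\hookrightarrow D_1$ induces inclusions $D^+\hookrightarrow D_1^+$ and $D_{\mathbb{R}}\hookrightarrow D_{1,\mathbb{R}}$, it induces a morphism of the two sequences, i.e.\ a commutative ladder with exact rows whose middle vertical arrow is the map $h\colon H_5(\Omega_D^4)\to H_5(\Omega_{D_1}^4)$ we must show injective. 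So I would first compute the top row explicitly, identify a generator of $H_5(\Omega_D^4)$, and then track its image.

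For the top row: because $K=\mathbb{C}\setminus D$ is compact and connected, Alexander duality gives $H_1(D)=H_1(\mathbb{C}\setminus K)\cong\mathbb{Z}$. By Lemma \ref{res2} this embeds $H_1(D^+)\oplus H_1(D^-)$ into $\mathbb{Z}$; since complex conjugation is a homeomorphism carrying $D^+$ onto $D^-$ we have $H_1(D^-)\cong H_1(D^+)$, and a subgroup of $\mathbb{Z}$ isomorphic to $H_1(D^+)\oplus H_1(D^+)$ is necessarily $0$, so $H_1(D^+)=0$. Moreover $K\cap\mathbb{R}$, being compact, connected and non‑empty, is a closed interval $[a,b]$, hence $D_{\mathbb{R}}=(-\infty,a)\cup(b,\infty)$ has exactly two components, and since $K$ is bounded both of them lie in a single component of $D^+$ (connect faraway real points through a large semicircle avoiding $K$). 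Therefore $\widehat{H}_0(D_{\mathbb{R}})=\ker\big(H_0(D_{\mathbb{R}})\to H_0(D^+)\big)\cong\mathbb{Z}$, generated by $g=[p]-[q]$ with $p<a<b<q$. Feeding this into the top row, $\pi$ is an isomorphism and $H_5(\Omega_D^4)\cong\mathbb{Z}$, a generator being the class $s$ of the five‑cycle $\Omega_{\gamma([0,1])}^4$ attached (as in the discussion following \eqref{sec1}) to an arc $\gamma$ in $D^+$ joining $p$ to $q$ with $\gamma(t)\notin\mathbb{R}$ for $0<t<1$. As $H_5(\Omega_{D_1}^4)$ is torsion‑free, it now suffices to prove $h(s)\neq 0$.

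Now $h(s)$ is the class of the same cycle $\Omega_{\gamma([0,1])}^4$ read inside $\Omega_{D_1}^4$, and by naturality its image under $\pi_1$ is $w(g)=[p]-[q]\in H_0(D_{1,\mathbb{R}})$. If $K'\cap\mathbb{R}\neq\emptyset$, pick $c\in K'\cap\mathbb{R}\subseteq[a,b]$; then $p<c<q$, so $p$ and $q$ lie in different components of $D_{1,\mathbb{R}}=\mathbb{R}\setminus(K'\cap\mathbb{R})$, hence $\pi_1(h(s))=[p]-[q]\neq 0$ and so $h(s)\neq 0$. If instead $K'\cap\mathbb{R}=\emptyset$, then $D_{1,\mathbb{R}}=\mathbb{R}$ is connected, $\widehat{H}_0(D_{1,\mathbb{R}})=0$, and $h(s)$ lies in the image of $H_1(D_1^+)\hookrightarrow H_5(\Omega_{D_1}^4)$. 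Here I take the segment $\rho=[p,q]\subset\mathbb{R}$ and the loop $\delta=\gamma\cdot\rho^{-1}$ in $D_1^+=\{\operatorname{Im}\geq 0\}\setminus K'$ (it avoids $K'$ because $\gamma$ avoids $K\supseteq K'$ and $\rho\subset\mathbb{R}$ avoids $K'$); since $\Omega_{\gamma}^4=\Omega_{\rho}^4+\Omega_{\delta}^4$ with $\Omega_{\rho}^4$ one‑dimensional (hence nullhomologous) and $\Omega_{\delta}^4$ the image of $[\delta]$ under $H_1(D_1^+)\hookrightarrow H_5(\Omega_{D_1}^4)$, we get that $h(s)$ equals that image. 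It remains to see $[\delta]\neq 0$: taking $\gamma$ embedded, so that $\delta$ is a Jordan curve, I would show that the bounded region $R$ enclosed by $\delta$ contains $K\cap\{\operatorname{Im}>0\}$ — using that $K\cap\{\operatorname{Im}\geq 0\}$ is connected (a consequence of $K$ being connected, symmetric and meeting $\mathbb{R}$ in the connected set $[a,b]$) together with the fact that the crosscut $\gamma$ splits the upper half‑plane into $R$ and an unbounded part — and then, since $K'$ is symmetric, $K'\cap\{\operatorname{Im}>0\}\neq\emptyset$ sits in $R$, so $\delta$ has non‑zero winding number about a point of $K'$ and $[\delta]\neq 0$ in $H_1(D_1^+)$. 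Hence $h(s)\neq 0$.

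In both cases $h$ is injective, which is the claim. I expect the real content to lie in the second paragraph (the identification of the top row, above all $H_1(D^+)=0$, which genuinely uses connectedness of $K$) and, in the case $K'\cap\mathbb{R}=\emptyset$, in the point‑set claim that $\gamma$ must enclose all of $K\cap\{\operatorname{Im}>0\}$ — this is precisely where connectedness of both $K$ and $K\cap\mathbb{R}$ is indispensable, and it is also where one should be mildly careful to choose $\gamma$ embedded so that speaking of a Jordan curve and of winding numbers is literally justified.
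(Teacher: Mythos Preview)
Your proof is correct, and its opening — computing $H_5(\Omega_D^4)\cong\mathbb Z$ from the exact sequence via $H_1(D)\cong\mathbb Z$, $H_1(D^+)=0$, $\widehat H_0(D_{\mathbb R})\cong\mathbb Z$ — is exactly what the paper does. The two arguments diverge only at the step ``show a generator maps to something nonzero''.

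The paper's route is shorter: it picks as generator the round $5$-sphere $S$ of radius $R_1>\max_{x\in K}|x|$ in $\mathcal Q_{\mathbb R_3}$ (equivalently, your $\Omega^4_{\gamma}$ for $\gamma$ the large upper semicircle). Since $K'\subset K$ lies in the bounded region enclosed by $S$, the paper simply asserts that $S$ is nontrivial in $H_5(\Omega_{D_1}^4)$ as well, with no case distinction. This is a one-line geometric/linking argument that bypasses the exact sequence for $D_1$ entirely.

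Your route instead tracks an \emph{arbitrary} arc generator through the exact sequence for $D_1$, which forces the case split on $K'\cap\mathbb R$. This buys you a more explicit picture of how the two short exact sequences interact (and in particular makes the second case, $K'\cap\mathbb R=\emptyset$, into a genuine Jordan-curve/winding-number argument using connectedness of $K\cap\{\operatorname{Im}\ge 0\}$), but at the cost of substantially more work. Note also that your chosen arc could simply be the large semicircle, in which case your second case collapses to ``$\delta$ is a large circle surrounding $K'$'', essentially the paper's one-liner. The identification ``$\Omega^4_\gamma$ equals the image of $[\delta]$ modulo the degenerate real segment'' is correct but worth stating a bit more carefully: the singular $5$-cycle $\eta_*(\delta\times[\mathbb S_{\mathbb R_3}])$ restricted to the $\rho^{-1}$-portion factors through a $1$-dimensional set, hence is null, and the remaining portion is literally $\Omega^4_\gamma$.
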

\begin{proof}
By construction $H_1(D) \simeq \mathbb{Z}$ and $ \widetilde{H}_0(D_{\mathbb{R}}) \simeq \mathbb{Z}$. By Lemma \ref{res2} we obtain $H_1(D^+) \simeq \{0\}$, so by the exact sequence \eqref{sec1} we get $H_5(\Omega_D^4) \simeq \mathbb{Z}$.  Let us consider $R_1 > \max \{ |x|: \,  x \in K \}$. We consider a 5-cycle $S$ with center 0 and radius $R_1$ in $ \mathcal{Q}_{\mathbb{R}_3}$. By this construction the set $K$ is in the interior of the 5-cycle $S$, this means that the 5-cycle defines a non-trivial homology class in $H_5(\Omega_D^4)$. It happens the same for $K'$. Therefore, we have that the map $i_* : H_{5}(\Omega_D^4) \to H_5(\Omega_{D_1}^4)$ maps a non- trivial element of $H_5(\Omega_D^4)$ to a non trivial element of $H_5(\Omega_{D_1}^4)$. This implies the thesis because $H_5(\Omega_D^4) \simeq \mathbb{Z}$.
\end{proof}

\begin{lemma}
\label{res5b}
Let $P$ and $Q$ be symmetric compact subset of $ \mathbb{C}$ such that $P \cap \mathbb{R} \neq \emptyset$, $Q \cap \mathbb{R} \neq \emptyset$ and connected. Moreover $Q \cap P = \emptyset$. Let $P'$ and $Q'$ be two non-empty symmetric closed subsets of $P$ and $Q$, respectively. Let us define
$$D:= (\mathbb{C} \setminus P)\setminus Q,$$
$$ D_1:= (\mathbb{C} \setminus P')\setminus Q'.$$
Then $H_3(\Omega_D^4) \to H_3(\Omega_{D_1}^4)$ is injective.
\end{lemma}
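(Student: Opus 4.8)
The plan is to follow the strategy of Lemma \ref{res5}, replacing $H_5$ by $H_3$ and carrying along the two holes $P$, $Q$. First I would compute $H_3(\Omega_D^4)$. Since $P$ and $Q$ are disjoint with $P\cap\mathbb R$ and $Q\cap\mathbb R$ nonempty intervals, $D_{\mathbb R}=\mathbb R\setminus(P\cup Q)$ has exactly three components (the two unbounded rays and the bounded interval separating the traces of $P$ and $Q$), and $D^+=D\cap\{z:\mathrm{Im}\,z\ge 0\}$ is connected and simply connected; hence $H_1(D^+)=0$ and $\widehat{H}_0(D_{\mathbb R})=\widetilde{H}_0(D_{\mathbb R})\cong\mathbb Z^2$. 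Lemma \ref{res2} then gives $H_1(D)\cong\mathbb Z^2$, generated by loops $\ell_P$, $\ell_Q$ around $P$ and $Q$, and the exact sequence \eqref{sec2b} collapses, since $H_1(D^+)=0$, to $H_3(\Omega_D^4)\cong\widehat{H}_0(D_{\mathbb R})\oplus\widehat{H}_0(D_{\mathbb R})\cong\mathbb Z^4$.

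Next I would exhibit an explicit basis of $H_3(\Omega_D^4)$. Fix points $p_L$, $p_M$, $p_R$ in the three components of $D_{\mathbb R}$. Choose a simple arc $\gamma_P\colon[0,1]\to D^+$ joining $p_M$ and $p_R$, with interior off $\mathbb R$, so that $\gamma_P$ together with the real segment between its endpoints bounds a region of $\{z:\mathrm{Im}\,z\ge0\}$ containing $P^+$ and disjoint from $Q$; choose $\gamma_Q$ similarly joining $p_L$ and $p_M$ and enclosing $Q$ but not $P$. Sweeping each of these arcs over one of the two $2$-spheres $\mathbb S^2_{(1)},\mathbb S^2_{(2)}\subset\mathbb S_{\mathbb R_3}$ generating $H_2(\mathbb S_{\mathbb R_3})\cong\mathbb Z\oplus\mathbb Z$ (Proposition \ref{split3}) produces $3$-cycles $\sigma_P^{(k)}=\Omega^2_{\gamma_P([0,1])}$ and $\sigma_Q^{(k)}=\Omega^2_{\gamma_Q([0,1])}$, $k=1,2$; by the geometric description of the sequences \eqref{sec2}--\eqref{sec2b} given after Proposition \ref{desc}, the family $\{\sigma_Q^{(1)},\sigma_P^{(1)},\sigma_Q^{(2)},\sigma_P^{(2)}\}$ is a basis of $H_3(\Omega_D^4)$.

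Then I would detect these classes in $\Omega_{D_1}^4$ by passing through two auxiliary ambient spaces. Since $P'$ and $Q'$ are nonempty, symmetric and connected they meet $\mathbb R$; pick $a\in P'\cap\mathbb R$ and $b\in Q'\cap\mathbb R$. As $\{a\},\{b\}\subset P'\cup Q'$, there are inclusions $\Omega_{D_1}^4\hookrightarrow\mathcal Q_{\mathbb R_3}\setminus\{a\}=\Omega^4_{\mathbb C\setminus\{a\}}$ and $\Omega_{D_1}^4\hookrightarrow\Omega^4_{\mathbb C\setminus\{b\}}$, inducing maps $\phi_a,\phi_b$ on $H_3$. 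Applying Proposition \ref{desc} to $\mathbb C\setminus\{a\}$ — whose real trace $\mathbb R\setminus\{a\}$ has two components and whose nonnegative‑imaginary part $\{z:\mathrm{Im}\,z\ge0\}\setminus\{a\}$ is contractible — yields $H_3(\Omega^4_{\mathbb C\setminus\{a\}})\cong\mathbb Z\oplus\mathbb Z$, each summand generated by an arc joining the two components of $\mathbb R\setminus\{a\}$ swept over $\mathbb S^2_{(k)}$, and likewise for $b$. I would then compute $\psi=(\phi_a,\phi_b)\circ i_*$ on the basis above: since $a$ lies in $P$, the arc $\gamma_P$ joins the two sides of $a$, so $\phi_a(i_*\sigma_P^{(k)})$ is, up to sign, a generator of the $k$-th summand of $H_3(\Omega^4_{\mathbb C\setminus\{a\}})$; whereas $\gamma_Q$ joins two points of $\mathbb R$ on one side of $a$ and encloses only $Q$, which misses $a$, so $\gamma_Q$ is homotopic rel endpoints in $\{z:\mathrm{Im}\,z\ge0\}\setminus\{a\}$ to the real segment, making $\Omega^2_{\gamma_Q([0,1])}$ homologous there to the one‑dimensional set $\Omega^2_{[p_L,p_M]}$, whence $\phi_a(i_*\sigma_Q^{(k)})=0$. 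Symmetrically $\phi_b(i_*\sigma_Q^{(k)})$ is $\pm$ a generator and $\phi_b(i_*\sigma_P^{(k)})=0$. Thus the matrix of $\psi$ in these bases is a signed permutation matrix, so $\psi$ is an isomorphism; in particular $i_*\colon H_3(\Omega_D^4)\to H_3(\Omega_{D_1}^4)$ is injective.

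The only substantial step is the computation of $\psi$ on the generators: that the $3$-cycle built from the hole containing $a$ \emph{links} the point $a$ nontrivially, while the one built from the other hole does not. This is the exact analogue for $H_3$ of the linking argument used for $H_5$ in the proof of Lemma \ref{res5}; it is made precise through the description of \eqref{sec2b} in terms of arcs joining components of the real trace, swept over the two generating $2$-spheres of $\mathbb S_{\mathbb R_3}$, together with the elementary observation that $a$ and $b$ lie in different components of $\mathbb C\setminus D$.
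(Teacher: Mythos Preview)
Your computation of $H_3(\Omega_D^4)\cong\mathbb Z^4$ via Lemma~\ref{res2} and the sequence~\eqref{sec2b} coincides with the paper's. Your detection argument --- pushing the four generators into $H_3(\Omega^4_{\mathbb C\setminus\{a\}})\oplus H_3(\Omega^4_{\mathbb C\setminus\{b\}})$ and checking that the resulting $4\times 4$ matrix is invertible --- is more explicit than the paper's proof, which simply asserts that the four classes $\gamma_i\times[\mathbb S^2_{(k)}]$ remain independent in $H_3(\Omega_{D_1}^4)$ because the curves $\gamma_1,\gamma_2$ still surround $P'$ and $Q'$.

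There is, however, a genuine gap. You write ``Since $P'$ and $Q'$ are nonempty, symmetric and connected they meet $\mathbb R$''; but the hypotheses only say that $P'$ and $Q'$ are nonempty symmetric \emph{closed} subsets of $P$ and $Q$ --- connectedness is not assumed, and a symmetric set such as $\{z_0,\bar z_0\}$ with $\mathrm{Im}\,z_0\neq 0$ need not meet $\mathbb R$. Worse, in the only place this lemma is used (Proposition~\ref{res8b}) one has $P'=P\cap D_1^c$ with $P\cap\mathbb R=\{q\}$ and $q\in D_1$, so in fact $P'\cap\mathbb R=\emptyset$ there. Thus the choice of a real $a\in P'$ on which your whole detection step rests is not available.

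The repair is not entirely cosmetic: for arbitrary $a\in P'$ you must pass to the symmetric complement $E=\mathbb C\setminus\{a,\bar a\}$. When $a\notin\mathbb R$ one has $E_{\mathbb R}=\mathbb R$, so $\widehat H_0(E_{\mathbb R})=0$ and the generators of $H_3(\Omega^4_E)\cong\mathbb Z^2$ come from $H_1(E^+)\oplus H_1(E^+)$ rather than from $\widehat H_0$. Your ``linking'' computation of $\phi_a(i_*\sigma_P^{(k)})$ and $\phi_a(i_*\sigma_Q^{(k)})$ then has to be redone in these new coordinates (e.g.\ via the naturality of \eqref{sec2b} or of Proposition~\ref{res2b}, tracking the winding of the closed curve $\gamma_P\cup\tau(\gamma_P)$ around $a$), after which the block-triangular structure of $\psi$ and the conclusion go through as you intended.
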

\begin{proof}
From the definition of the set $D$ we have
$$ H_{1}(D) \simeq \mathbb{Z} \oplus \mathbb{Z},$$
$$ H_{0}(D_{\mathbb{R}}) \simeq \mathbb{Z} \oplus \mathbb{Z} \oplus \mathbb{Z}.$$
From the last one we derive that
$$ \widetilde{H}_{0}(D_{\mathbb{R}}) \simeq \mathbb{Z} \oplus \mathbb{Z}.$$
By Lemma \ref{res2} we obtain $H_1(D^+) \simeq \{0\}$. Thus by the exact sequence \eqref{sec2} we get $H_3(\Omega_D^4) \simeq \mathbb{Z} \oplus \mathbb{Z} \oplus \mathbb{Z} \oplus \mathbb{Z}$.
\\ Now we define two different closed curves $\gamma_1$ and $ \gamma_2$ such that they do not intersect themselves and surround $P$ and $Q$, respectively. We remark that since $P'$ and $Q'$ are in the interior of $P$ and $Q$, respectively, they are also surrounded by $ \gamma_1$ and $\gamma_2$. This gives us the possibility to define the following inclusion map
$$ i_*:\left(\Omega^2_{{\gamma_1}}, \Omega^2_{{\gamma_2}}, \Omega^2_{{\gamma_1}}, \Omega^2_{{\gamma_2}} \right)_D \hookrightarrow \left(\Omega^2_{{\gamma_1}}, \Omega^2_{{\gamma_2}}, \Omega^2_{{\gamma_1}}, \Omega^2_{{\gamma_2}} \right)_{D_1},$$
where $ \Omega^2_{{\gamma_1}}$ and $\Omega^2_{{\gamma_2}}$ are $ \gamma_1 \times [\mathbb{S}^2]$ and $\gamma_2 \times [\mathbb{S}^2]$, respectively, and the subscripts outside the brackets recall the fact that we consider the closed curves $\gamma_1$ and $ \gamma_2$ in $D$ and $D_1$, respectively.
This means that
$$ i_*: H_{3}(\Omega_D^4) \to H_{3}(\Omega_{D_1}^4),$$
maps four non-trivial independent elements of $H_3(\Omega_D^4)$ to  four non-trivial independent elements of $H_3(\Omega_{D_1}^4)$. Finally, since $H_{3}(\Omega_D^4) \simeq \mathbb{Z} \oplus \mathbb{Z} \oplus \mathbb{Z} \oplus \mathbb{Z}$ we get that $H_{3}(\Omega_D^4) \to H_3(\Omega_{D_1}^4)$ is injective.
\end{proof}

\begin{nb}
In the previous lemma the hypothesis $P \cap Q= \emptyset$ is essential to have two different holes in the complex plane.
\end{nb}

\begin{nb}
It is possible to write the sets $D$ and $D_1$ of the previous lemma in other ways, such as $D:= (\mathbb{C} \setminus P) \cap (\mathbb{C} \setminus Q)$ and $ D_1:=( \mathbb{C} \setminus P') \cap (\mathbb{C} \setminus Q').$
\end{nb}
\begin{prop}
\label{res8}
Let $D \subset D_1$ be symmetric open subset of $ \mathbb{C}$ such that $H_1(D) \to H_1(D_1)$ is injective. Then $H_5(\Omega_D^4) \to H_{5}(\Omega_{D_1}^4)$ is injective.
\end{prop}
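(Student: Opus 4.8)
The plan is to deduce the statement from the natural short exact sequence of Proposition~\ref{res21}, using the equivariance of everything in sight under complex conjugation $\tau$ together with the structural sequence of Lemma~\ref{res2}. Concretely, I would apply Proposition~\ref{res21} to $D$ and to $D_1$ (reducing to the connected case if necessary) and use that the inclusion $D\hookrightarrow D_1$ is $\tau$-equivariant and compatible with the maps $\mathfrak{a}$, $\mathfrak{b}$; this produces a commutative ladder with exact rows
$$
\begin{tikzcd}
0 \arrow[r] & H_1(D^+) \arrow[r, "\mathfrak{a}"] \arrow[d, "f"'] & H_1(D) \arrow[r, "\mathfrak{b}"] \arrow[d, "g"] & H_5(\Omega_D^4) \arrow[r] \arrow[d, "h"] & 0 \\
0 \arrow[r] & H_1(D_1^+) \arrow[r, "\mathfrak{a}_1"] & H_1(D_1) \arrow[r, "\mathfrak{b}_1"] & H_5(\Omega_{D_1}^4) \arrow[r] & 0
\end{tikzcd}
$$
in which $g$ is injective by hypothesis (and $f$ is injective by Corollary~\ref{res3}). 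The goal is to show $\ker h=0$.

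The heart of the argument is a diagram chase. Take $\xi\in\ker h$ and, by surjectivity of $\mathfrak b$, write $\xi=\mathfrak{b}(\gamma)$ with $\gamma\in H_1(D)$. Commutativity gives $\mathfrak{b}_1(g(\gamma))=0$, so $g(\gamma)=\mathfrak{a}_1(\delta)=\delta-\tau_*\delta$ for some $\delta\in H_1(D_1^+)$. Since $g\circ\tau_*=\tau_*\circ g$ and $\tau_*^2=\operatorname{id}$, one computes $g(\gamma+\tau_*\gamma)=g(\gamma)+\tau_*g(\gamma)=(\delta-\tau_*\delta)+(\tau_*\delta-\delta)=0$, hence $\tau_*\gamma=-\gamma$ by injectivity of $g$. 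Now I bring in Lemma~\ref{res2}: in the natural exact sequence $0\to H_1(D^+)\oplus H_1(D^-)\xrightarrow{J}H_1(D)\xrightarrow{q}\widehat{H}_0(D_{\mathbb{R}})\to 0$, the map $q$ is $\tau_*$-equivariant and $\tau_*$ acts as the identity on $\widehat{H}_0(D_{\mathbb{R}})$ (conjugation fixes $D_{\mathbb{R}}$ pointwise), so $q(\gamma)=q(\tau_*\gamma)=-q(\gamma)$; as $\widehat{H}_0(D_{\mathbb{R}})$ is a subgroup of the free abelian group $H_0(D_{\mathbb{R}})$, it is torsion-free and $q(\gamma)=0$. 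Thus $\gamma=i_+(a)+i_-(b)$ with $a\in H_1(D^+)$, $b\in H_1(D^-)$, where $i_\pm\colon H_1(D^\pm)\to H_1(D)$ are induced by inclusion and $J(a,b)=i_+(a)+i_-(b)$. Imposing $\tau_*\gamma=-\gamma$ and using injectivity of $J$ together with $\tau_*\circ i_\pm=i_\mp\circ\tau_*$ forces $b=-\tau_*a$, so $\gamma=i_+(a)-i_-(\tau_*a)=\mathfrak{a}(a)$. Therefore $\xi=\mathfrak{b}(\gamma)=\mathfrak{b}(\mathfrak{a}(a))=0$, and $h$ is injective.

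The purely formal ingredients — $\tau$-equivariance of all the maps, the applicability of Proposition~\ref{res21}, and the identification of the first arrow of Lemma~\ref{res2} with the sum of the inclusion-induced maps — are routine bookkeeping. The one point that genuinely requires care is the implication $\tau_*\gamma=-\gamma\ \Rightarrow\ \gamma\in\operatorname{im}\mathfrak{a}$: this is exactly where $H_1(D)$ must be decomposed into the "spherical" part $\operatorname{im}J$ and the "real" part $\widehat{H}_0(D_{\mathbb{R}})$, and where the torsion-freeness of $\widehat{H}_0(D_{\mathbb{R}})$ is indispensable (without it a stray $2$-torsion class could survive in $q(\gamma)$). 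I expect this to be the main — though modest — obstacle; once it is passed, the conclusion is immediate.
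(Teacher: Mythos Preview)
Your diagram chase breaks at the step where you conclude $q(\gamma)=0$. The assertion that ``$q$ is $\tau_*$-equivariant and $\tau_*$ acts as the identity on $\widehat{H}_0(D_{\mathbb{R}})$'' cannot hold as stated. Although $\tau$ fixes $D_{\mathbb{R}}$ pointwise, the map $q$ of Lemma~\ref{res2} is a Mayer--Vietoris connecting homomorphism, and the identification of its target with $\widehat{H}_0(D_{\mathbb{R}})\subset H_0(D_{\mathbb{R}})$ is obtained by projecting onto one of the two components of $V\setminus D_{\mathbb{R}}$, which $\tau$ interchanges. With that identification $\tau_*$ acts as $-\mathrm{id}$ on the target, so equivariance together with $\tau_*\gamma=-\gamma$ yields only the tautology $-q(\gamma)=-q(\gamma)$. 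The example $D=\mathbb{C}\setminus\{0\}$ already shows the implication ``$\tau_*\gamma=-\gamma\Rightarrow\gamma\in\mathrm{im}(\mathfrak{a})$'' is false: there $H_1(D^{\pm})=0$, hence $\mathrm{im}(\mathfrak{a})=0$, while the unit circle $\gamma$ generates $H_1(D)\cong\mathbb{Z}$ and satisfies $\tau_*\gamma=-\gamma$ with $q(\gamma)\neq 0$. In snake-lemma terms, injectivity of $f$ and $g$ only embeds $\ker h$ into $\mathrm{coker}\,f$, which need not vanish; the extra information you hoped to extract from $\tau$ is exactly what is not there.

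The paper's argument is different and genuinely uses geometry. It works with the sequence~\eqref{sec1b} (with $\widehat{H}_0(D_{\mathbb{R}})$ on the right) rather than Proposition~\ref{res21}, shows that a nonzero element of $\ker h$ must project to a nonzero class $\alpha_0\in\widehat{H}_0(D_{\mathbb{R}})$, and then --- following \cite[Prop.~2.14]{BW1}, whose $H_3$ analogue is written out in full in Proposition~\ref{res8b} --- locates a point $q\in\mathbb{R}\setminus D$, constructs an explicit symmetric compact $K\subset\mathbb{C}\setminus D$ with $K\cap(\mathbb{C}\setminus D_1)\neq\emptyset$, and reaches a contradiction via Lemma~\ref{res5}. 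That geometric construction is precisely what replaces the failed algebraic implication; a purely formal chase from Proposition~\ref{res21} does not seem to suffice.
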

\begin{proof}
We show this result by absurd. Let
$$ \delta \in \hbox{ker}(H_{5}(\Omega_D^4) \to H_{5}(\Omega_{D_1}^4)), \qquad \delta \neq 0.$$
The injectivity of $H_1(D) \to H_1(D_1)$ implies by Corollary \ref{res3} that $H_1(D^+) \to H_1(D_1^+)$ is injective, too. From the inclusion map $D \hookrightarrow D_1$ applied to the sequence \eqref{sec1b} we get the following commutative diagram
$$
\begin{tikzcd}
0 \arrow[r, ""] \arrow[d, ""']
& H_1(D^+) \arrow[r, ""] \arrow[d, ""']
& H_5(\Omega_D^4) \arrow[r, ""] \arrow[d, ""]
& \widehat{H}_0(D_{\mathbb{R}}) \arrow[r, ""] \arrow[d, ""]
& 0 \ \arrow[d, ""]\\
0 \arrow[r, ""] & H_1(D^+_1) \arrow[r, ""] & H_5(\Omega_{D_1}^4) \arrow[r, ""] &  \widehat{H}_0(D_{1, \, \mathbb{R}}) \arrow[r, ""] & 0
\end{tikzcd}
$$
Following similar computations of \cite[Prop. 2.14]{BW1} we reach the absurd.
\end{proof}
\begin{prop}
\label{res8b}
Let $D \subset D_1$ be symmetric open subsets of $ \mathbb{C}$ such that $H_1(D) \oplus H_1(D) \to H_1(D_1) \oplus H_1(D_1)$ is injective. Then $H_3(\Omega_D^4) \to H_3(\Omega_{D_1}^4)$ is injective.
\end{prop}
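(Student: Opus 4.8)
The plan is to reduce the statement to Proposition \ref{res8} by comparing the two exact sequences supplied by Propositions \ref{res21} and \ref{res2b}. First, since the map $H_1(D)\oplus H_1(D)\to H_1(D_1)\oplus H_1(D_1)$ is the direct sum of two copies of $i_*\colon H_1(D)\to H_1(D_1)$, its injectivity is equivalent to the injectivity of $i_*\colon H_1(D)\to H_1(D_1)$. By Corollary \ref{res3} the latter forces $H_1(D^+)\to H_1(D_1^+)$ to be injective as well, and hence so is $H_1(D^+)\oplus H_1(D^+)\to H_1(D_1^+)\oplus H_1(D_1^+)$.

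Next I would observe that the map $\mathfrak a$ of Proposition \ref{res21} and the map $\alpha'$ of Proposition \ref{res2b} satisfy $\alpha'=\mathfrak a\oplus\mathfrak a$: this is precisely what the identity $\alpha'(c,c')=\eta_1(c,-\tau_*c,c',-\tau_*c')$ in the proof of Proposition \ref{res2b} records, once one recalls that $\mathfrak a(\gamma)=\gamma-\tau_*\gamma$ and that $\eta_1$ is (a doubled copy of) the inclusion $H_1(D^+)\oplus H_1(D^-)\hookrightarrow H_1(D)$. Consequently $\hbox{ker}(\beta')=\hbox{Im}(\alpha')=\hbox{Im}(\mathfrak a)\oplus\hbox{Im}(\mathfrak a)=\hbox{ker}(\mathfrak b\oplus\mathfrak b)$, so the two surjections $\beta'$ and $\mathfrak b\oplus\mathfrak b$ out of $H_1(D)\oplus H_1(D)$ have the same kernel and hence induce a canonical isomorphism $H_3(\Omega_D^4)\cong H_5(\Omega_D^4)\oplus H_5(\Omega_D^4)$. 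All four maps $\mathfrak a,\mathfrak b,\alpha',\beta'$ are given by explicit geometric formulas that are natural with respect to inclusions of symmetric domains, so this isomorphism is natural in $D$ and carries $H_3(\Omega_D^4)\to H_3(\Omega_{D_1}^4)$ to the direct sum of two copies of $H_5(\Omega_D^4)\to H_5(\Omega_{D_1}^4)$. The conclusion then follows at once from Proposition \ref{res8}.

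Alternatively, in the spirit of the proof of Proposition \ref{res8} itself, one may apply $D\hookrightarrow D_1$ to the exact sequence of Proposition \ref{res2b} to obtain a morphism of short exact sequences whose first two vertical maps are injective by the reduction above, and then run a diagram chase: given $\delta\in\hbox{ker}\bigl(H_3(\Omega_D^4)\to H_3(\Omega_{D_1}^4)\bigr)$, lift it to $c\in H_1(D)\oplus H_1(D)$ with $\beta'(c)=\delta$; its image in $H_1(D_1)\oplus H_1(D_1)$ is killed by $\beta'$, hence lies in $\hbox{Im}(\alpha')$, and since $\alpha'=\mathfrak a\oplus\mathfrak a$ with $\mathfrak a(\gamma)=\gamma-\tau_*\gamma$ this image is anti-invariant under $\tau_*$; the injectivity of $H_1(D)\oplus H_1(D)\to H_1(D_1)\oplus H_1(D_1)$ then gives $\tau_*c=-c$, and the decomposition of Lemma \ref{res2} forces $c\in\hbox{Im}(\alpha')$, whence $\delta=\beta'(c)=0$.

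The main obstacle is the one already met in Proposition \ref{res8}: the ``naive'' short exact sequence \eqref{sec2b} is not, by itself, enough, because the homomorphism it induces on the $\widehat{H}_0(D_{\mathbb{R}})$-summands need not be injective. One genuinely needs the finer resolution of $H_3(\Omega_D^4)$ by the complex-analytic group $H_1(D)\oplus H_1(D)$ provided by Proposition \ref{res2b}, together with the bookkeeping of the complex-conjugation action, so that the hypothesis on $H_1$ can actually be exploited. Once this is arranged, the remaining diagram chase — equivalently, the reduction to Proposition \ref{res8} — is routine.
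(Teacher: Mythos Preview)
Your argument is correct and takes a genuinely different route from the paper. The paper proves the proposition by contradiction using the exact sequence \eqref{sec2b}: it shows that a nonzero $\alpha\in\ker\bigl(H_3(\Omega_D^4)\to H_3(\Omega_{D_1}^4)\bigr)$ must have nonzero image $(\alpha_0,\alpha_0)$ in $\widehat{H}_0(D_{\mathbb{R}})^2$, then locates two real points $q,q'\in\mathbb{R}\setminus D$, builds paths $\xi,\xi'$ and symmetric compacta $P,Q$, and invokes the ad hoc Lemma~\ref{res5b} to obtain a contradiction --- essentially rerunning the geometric proof of Proposition~\ref{res8} with the bookkeeping doubled. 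You instead observe that $\alpha'=\mathfrak a\oplus\mathfrak a$, so the two exact sequences of Propositions~\ref{res21} and~\ref{res2b} exhibit $H_5(\Omega_D^4)^2$ and $H_3(\Omega_D^4)$ as the \emph{same} quotient of $H_1(D)^2$; this yields a natural isomorphism $H_3(\Omega_D^4)\cong H_5(\Omega_D^4)\oplus H_5(\Omega_D^4)$ and reduces the statement at once to Proposition~\ref{res8}. Your approach is shorter, makes Lemma~\ref{res5b} entirely superfluous, and explains structurally why the $H_3$ and $H_5$ cases run in parallel; the paper's approach, by contrast, is self-contained (it does not rely on having already proved Proposition~\ref{res8}) and displays the geometric content --- the two disjoint holes $P,Q$ reflecting the two $\mathbb{S}^2$-factors inside $\mathbb{S}_{\mathbb{R}_3}$. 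One small point in your alternative diagram chase: the step ``the decomposition of Lemma~\ref{res2} forces $c\in\hbox{Im}(\alpha')$'' tacitly uses that $\widehat{H}_0(D_{\mathbb{R}})$ is torsion-free (so that $\tau_*\bar c=\bar c=-\bar c$ forces $\bar c=0$, placing $c$ in $H_1(D^+)\oplus H_1(D^-)$); this is true, but worth stating.
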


\begin{proof}
By absurd let us assume
$$ \alpha \in \hbox{ker}(H_3(\Omega_D^4) \to H_{3}(\Omega_{D_1}^4)), \quad \alpha \neq 0.$$	
By the hypothesis we derive that $H_1(D) \to H_1(D_1)$ is injective, thus by Corollary \ref{res3} we have that $H_1(D^+) \to H_1(D_1^+)$ is injective, it follows that $H_1(D^+) \oplus H_1(D^+) \to H_1(D_1^+) \oplus H_1(D_1^+) $ is injective, too.
\\ The inclusion map $D \hookrightarrow D_1$ applied to the sequence \eqref{sec2b} yields the following commutative diagram
$$
\begin{tikzcd}
0 \arrow[r, ""] \arrow[d, ""']
& H_1(D^+) \oplus H_1(D^+) \arrow[r, ""] \arrow[d, ""']
& H_3(\Omega_D^4) \arrow[r, ""] \arrow[d, ""]
& \widehat{H}_0(D_{\mathbb{R}}) \oplus \widehat{H}_0(D_{\mathbb{R}}) \arrow[r, ""] \arrow[d, ""]
& 0 \ \arrow[d, ""]\\
0 \arrow[r, ""] & H_1(D^+_1) \oplus H_1(D^+_1) \arrow[r, ""] & H_3(\Omega_{D_1}^4) \arrow[r, ""] &  \widehat{H}_0(D_{1, \, \mathbb{R}}) \oplus \widehat{H}_0(D_{1, \, \mathbb{R}}) \arrow[r, ""] & 0
\end{tikzcd}
$$	
	
Let the couple $ ( \alpha_0, \alpha_0)$ be the image of $ \alpha$ in $\widehat{H}_0(D_{\mathbb{R}}) \oplus \widehat{H}_0(D_{\mathbb{R}})$. Now, we want to prove that $( \alpha_0, \alpha_0) \neq (0,0)$. In order to reach an absurd we suppose that $ (\alpha_0, \alpha_0)=(0,0)$. This means that $ \alpha$ is induced by a couple $(\beta, \beta) \in H_1(D^+) \oplus H_1(D^+)$. Thus, if $ \alpha \neq 0$ then $(\beta, \beta) \neq 0$. However, we have an absurd since $H_1(D^+) \oplus H_1(D^+) \to H_1(D^+_1) \oplus H_1(D^+_1)$ and $H_1(D_1^+) \oplus H_1(D_1^+) \to H_3(\Omega_D^4)$ are injective and $ \alpha$ is mapped to zero in $H_3(\Omega_{D_1}^4)$. Hence $( \alpha_0, \alpha_0) \neq (0,0).$
\\ Now, since the image of $ \alpha$ in $H_3(\Omega_{D_1}^4)$ is zero we have that the image in $ \widehat{H}_0(D_{1, \, \mathbb{R}}) \oplus \widehat{H}_0(D_{1, \, \mathbb{R}})$ is zero, too. Thus implies that $ \alpha_0$ vanishes in $\widehat{H}_0(D_{1, \mathbb{R}})$. In particular we obtain that
$$ \alpha_0 \in \hbox{ker}(\widehat{H}_0(D_{\mathbb{R}}) \to \widehat{H}_0(D_{1, \mathbb{R}})).$$
We can represent $ \alpha_0$ as a formal $ \mathbb{Z}$-linear combination $ \sum_{x \in I} n_{x} \{ x \}$, where $I$ is a finite subset of $D_{\mathbb{R}}$. Moreover, by the definition of $\widehat{H}_{0}(D_{\mathbb{R}})$ (see Corollary \ref{result1}) we have that
$$ \alpha_0 \in \hbox{ker}(H_0(D_{\mathbb{R}}) \to H_0(D^+)).$$
This implies that $ \sum_{k} n_k=0$. Since $( \alpha_0, \alpha_0) \neq (0,0)$ and the previous facts we can find two points $q \in \mathbb{R} \setminus D$ and $q' \in \mathbb{R} \setminus D$ ($ q \neq q'$) such that
$$ \sum_{p \in I, \, p>q} n_p \neq 0, \qquad \sum_{p' \in I, \, p'>q'} n_{p'} \neq 0.$$
In order to fix the ideas, let $q< q'.$
\begin{figure}[H]
\centering
\resizebox{0.90\textwidth}{!}{%
\tikzset{every picture/.style={line width=0.75pt}} 

\begin{tikzpicture}[x=0.75pt,y=0.75pt,yscale=-1,xscale=1]

\draw  [fill={rgb, 255:red, 18; green, 130; blue, 208 }  ,fill opacity=0.97 ] (105.33,76.75) .. controls (114.83,67.58) and (209.42,32.08) .. (274.33,27.25) .. controls (339.25,22.42) and (436.33,15.25) .. (486.33,53.75) .. controls (536.33,92.25) and (536.5,153.85) .. (544.33,206.75) .. controls (552.16,259.65) and (494.93,319.43) .. (457.33,342.75) .. controls (419.74,366.07) and (310.28,355.16) .. (251.33,357.75) .. controls (192.39,360.34) and (182.07,340.73) .. (155.42,332.58) .. controls (128.77,324.44) and (87.33,259.42) .. (81.33,245.75) .. controls (75.33,232.08) and (64.71,163.58) .. (72.33,136.75) .. controls (79.96,109.92) and (95.83,85.92) .. (105.33,76.75) -- cycle ;
\draw  [fill={rgb, 255:red, 139; green, 181; blue, 93 }  ,fill opacity=1 ] (175.33,88.75) .. controls (201.33,75.75) and (236.59,81.95) .. (276.42,85.58) .. controls (316.25,89.22) and (456.73,66.31) .. (478.33,82.75) .. controls (499.94,99.19) and (515.42,222.08) .. (507.33,258.75) .. controls (499.25,295.42) and (457.33,316.25) .. (429.33,321.25) .. controls (401.33,326.25) and (307.57,324.51) .. (257.42,326.58) .. controls (207.27,328.66) and (173.81,302.52) .. (157.75,289) .. controls (141.69,275.48) and (120.42,249.08) .. (117.42,222.08) .. controls (114.42,195.08) and (111.33,158.75) .. (118.33,139.75) .. controls (125.33,120.75) and (149.33,101.75) .. (175.33,88.75) -- cycle ;
\draw  [fill={rgb, 255:red, 227; green, 211; blue, 18 }  ,fill opacity=1 ] (234.25,279) .. controls (212.43,279) and (194.75,246.32) .. (194.75,206) .. controls (194.75,165.68) and (212.43,133) .. (234.25,133) .. controls (256.07,133) and (273.75,165.68) .. (273.75,206) .. controls (273.75,246.32) and (256.07,279) .. (234.25,279) -- cycle ;
\draw  [fill={rgb, 255:red, 255; green, 255; blue, 255 }  ,fill opacity=1 ] (230.42,161.75) .. controls (230.42,155.17) and (235.75,149.83) .. (242.33,149.83) .. controls (248.91,149.83) and (254.25,155.17) .. (254.25,161.75) .. controls (254.25,168.33) and (248.91,173.67) .. (242.33,173.67) .. controls (235.75,173.67) and (230.42,168.33) .. (230.42,161.75) -- cycle ;
\draw  [fill={rgb, 255:red, 255; green, 255; blue, 255 }  ,fill opacity=1 ] (227,248.92) .. controls (227,242.34) and (232.34,237) .. (238.92,237) .. controls (245.5,237) and (250.83,242.34) .. (250.83,248.92) .. controls (250.83,255.5) and (245.5,260.83) .. (238.92,260.83) .. controls (232.34,260.83) and (227,255.5) .. (227,248.92) -- cycle ;
\draw    (176.43,213.64) .. controls (177.8,204.94) and (179.4,196.8) .. (181.2,189.19) .. controls (208.38,74.33) and (281.68,83.47) .. (300.75,209.46) ;
\draw    (175.39,211.67) .. controls (175.99,214.1) and (176.61,216.46) .. (177.25,218.77) .. controls (211.49,342.47) and (293.88,292.16) .. (300.75,211.67) ;
\draw  [fill={rgb, 255:red, 0; green, 0; blue, 0 }  ,fill opacity=1 ] (173.46,213.64) .. controls (173.44,212.55) and (174.3,211.67) .. (175.39,211.67) .. controls (176.48,211.67) and (177.38,212.55) .. (177.41,213.64) .. controls (177.43,214.73) and (176.57,215.61) .. (175.48,215.61) .. controls (174.39,215.61) and (173.49,214.73) .. (173.46,213.64) -- cycle ;
\draw  [fill={rgb, 255:red, 0; green, 0; blue, 0 }  ,fill opacity=1 ] (298.54,211.67) .. controls (298.54,210.45) and (299.53,209.46) .. (300.75,209.46) .. controls (301.97,209.46) and (302.96,210.45) .. (302.96,211.67) .. controls (302.96,212.89) and (301.97,213.88) .. (300.75,213.88) .. controls (299.53,213.88) and (298.54,212.89) .. (298.54,211.67) -- cycle ;
\draw  [fill={rgb, 255:red, 0; green, 0; blue, 0 }  ,fill opacity=1 ] (237.04,212.04) .. controls (237.04,210.78) and (238.07,209.75) .. (239.33,209.75) .. controls (240.6,209.75) and (241.62,210.78) .. (241.62,212.04) .. controls (241.62,213.31) and (240.6,214.33) .. (239.33,214.33) .. controls (238.07,214.33) and (237.04,213.31) .. (237.04,212.04) -- cycle ;
\draw    (238.33,255.75) .. controls (255.46,227.17) and (245.83,222.17) .. (239.33,210.75) ;
\draw    (239.33,210.75) .. controls (256.46,182.17) and (248.83,173.17) .. (242.33,161.75) ;
\draw    (340.43,214.64) .. controls (341.8,205.94) and (343.4,197.8) .. (345.2,190.19) .. controls (372.38,75.33) and (445.68,84.47) .. (464.75,210.46) ;
\draw    (340.39,210.46) .. controls (340.99,212.89) and (341.61,215.26) .. (342.25,217.56) .. controls (342.89,219.87) and (397.33,396.75) .. (464.75,210.46) ;
\draw  [fill={rgb, 255:red, 227; green, 211; blue, 18 }  ,fill opacity=1 ] (401.25,276) .. controls (379.43,276) and (361.75,243.32) .. (361.75,203) .. controls (361.75,162.68) and (379.43,130) .. (401.25,130) .. controls (423.07,130) and (440.75,162.68) .. (440.75,203) .. controls (440.75,243.32) and (423.07,276) .. (401.25,276) -- cycle ;
\draw  [fill={rgb, 255:red, 255; green, 255; blue, 255 }  ,fill opacity=1 ] (388.42,156.75) .. controls (388.42,150.17) and (393.75,144.83) .. (400.33,144.83) .. controls (406.91,144.83) and (412.25,150.17) .. (412.25,156.75) .. controls (412.25,163.33) and (406.91,168.67) .. (400.33,168.67) .. controls (393.75,168.67) and (388.42,163.33) .. (388.42,156.75) -- cycle ;
\draw  [fill={rgb, 255:red, 255; green, 255; blue, 255 }  ,fill opacity=1 ] (390.42,251.75) .. controls (390.42,245.17) and (395.75,239.83) .. (402.33,239.83) .. controls (408.91,239.83) and (414.25,245.17) .. (414.25,251.75) .. controls (414.25,258.33) and (408.91,263.67) .. (402.33,263.67) .. controls (395.75,263.67) and (390.42,258.33) .. (390.42,251.75) -- cycle ;
\draw    (401.33,211.75) .. controls (418.46,183.17) and (403.75,164.42) .. (397.25,153) ;
\draw    (400.33,256.75) .. controls (417.46,228.17) and (407.83,223.17) .. (401.33,211.75) ;
\draw    (22.33,212.64) -- (176.43,212.64) -- (256.35,212.64) -- (576.33,212.64) ;
\draw  [fill={rgb, 255:red, 0; green, 0; blue, 0 }  ,fill opacity=1 ] (467.33,212.58) .. controls (467.33,210.93) and (465.99,209.58) .. (464.33,209.58) .. controls (462.68,209.58) and (461.33,210.93) .. (461.33,212.58) .. controls (461.33,214.24) and (462.68,215.58) .. (464.33,215.58) .. controls (465.99,215.58) and (467.33,214.24) .. (467.33,212.58) -- cycle ;
\draw  [fill={rgb, 255:red, 0; green, 0; blue, 0 }  ,fill opacity=1 ] (343.43,213.64) .. controls (343.43,211.98) and (342.09,210.64) .. (340.43,210.64) .. controls (338.78,210.64) and (337.43,211.98) .. (337.43,213.64) .. controls (337.43,215.29) and (338.78,216.64) .. (340.43,216.64) .. controls (342.09,216.64) and (343.43,215.29) .. (343.43,213.64) -- cycle ;
\draw  [fill={rgb, 255:red, 0; green, 0; blue, 0 }  ,fill opacity=1 ] (405.33,212.75) .. controls (405.33,211.09) and (403.99,209.75) .. (402.33,209.75) .. controls (400.68,209.75) and (399.33,211.09) .. (399.33,212.75) .. controls (399.33,214.41) and (400.68,215.75) .. (402.33,215.75) .. controls (403.99,215.75) and (405.33,214.41) .. (405.33,212.75) -- cycle ;

\draw (34,194.83) node [anchor=north west][inner sep=0.75pt]   [align=left] {$\mathbb{R}$};
\draw (276.33,30.25) node [anchor=north west][inner sep=0.75pt]   [align=left] {{\small $D_1$}};
\draw (141,121.83) node [anchor=north west][inner sep=0.75pt]   [align=left] {D};
\draw (202,191.83) node [anchor=north west][inner sep=0.75pt]  [font=\small] [align=left] {B};
\draw (270,102.5) node [anchor=north west][inner sep=0.75pt]   [align=left] {$\gamma$};
\draw (159,215.5) node [anchor=north west][inner sep=0.75pt]  [font=\small] [align=left] {{\small $R_1$}};
\draw (280.96,214.67) node [anchor=north west][inner sep=0.75pt]  [font=\small] [align=left] {{\footnotesize $R_2$}};
\draw (250,193.67) node [anchor=north west][inner sep=0.75pt]  [font=\footnotesize] [align=left] {$\xi$};
\draw (226,213.67) node [anchor=north west][inner sep=0.75pt]  [font=\small] [align=left] {q};
\draw (320,217.67) node [anchor=north west][inner sep=0.75pt]   [align=left] {{\small $R'_1$}};
\draw (466.75,213.46) node [anchor=north west][inner sep=0.75pt]   [align=left] {{\small $R'_2$}};
\draw (388,217.67) node [anchor=north west][inner sep=0.75pt]   [align=left] {q'};
\draw (368,189.67) node [anchor=north west][inner sep=0.75pt]   [align=left] {B'};
\draw (413,183.67) node [anchor=north west][inner sep=0.75pt]   [align=left] {{\footnotesize $\xi'$}};
\draw (431,101.67) node [anchor=north west][inner sep=0.75pt]   [align=left] {$\gamma'$};

\end{tikzpicture}
}
\caption{Figure 1}
\end{figure}	
Fix such points $q$ and $q'$. Let us consider $B$ as the connected component of $ \mathbb{C} \setminus D$ containing $q$ and $B'$ the connected component  of $ \mathbb{C} \setminus D$ containing $q'$.
Fix $R_1$, $R_2 \in I$ and $R'_1$, $R'_2 \in I$ such that
$$ \begin{cases}
R_1 < q< R_2\\
R'_1< q' < R'_2
\end{cases}	
$$	
and $I \cap ]R_1,R_2[= \emptyset$, $I \cap ]R'_1,R'_2[ = \emptyset$. We know that $ \alpha_0$ is mapped to zero in $\widehat{H}_0(D_{1, \mathbb{R}})$, this means that $[R_1,R_2] \subset D_{1, \mathbb{R}}$ and $[R'_1,R'_2] \subset D_{1, \mathbb{R}}$. Moreover $ \alpha_0$ is mapped to zero in $H_{0}(D^+)$, this implies that both $R_1,R_2$ and $R'_1,R'_2$ are in the same connected component of $D^+$. Therefore $R_1$ and $R_2$ can be connected by a path $ \gamma$ in $D^+$. This path, combined with its image under conjugation yields, a closed curve inside $D$ which surrounds $q$. It is possible to repeat the same reasoning for $R_1'$ and $R'_2$. In this case we obtain a closed curve $ \gamma'$ inside $D$ which surrounds $q'$. Therefore $B$ and $B'$ must be bounded and $B \cap \mathbb{R} \subseteq ]R_1,R_2[$, $B' \cap \mathbb{R} \subseteq ]R'_1,R'_2[$. Furthermore, since $ [R_1, R_2] \subset D_{1, \mathbb{R}}$ and $[R'_1,R'_2] \subset D_{1, \mathbb{R}}$ by the previous facts $ \mathbb{R} \cap (B \setminus D_1)= \emptyset$ and $ \mathbb{R} \cap (B' \setminus D_1)= \emptyset$.
\\ By hypothesis $H_{1}(D) \oplus H_1(D) \to H_{1}(D_1) \oplus H_{1}(D_1)$ is injective. Thus, it maps two non-trivial independent 1-cycles of $H_1(D)$ to two non trivial 1-cycles of $H_1(D_1)$. This implies the boundedness of $B$ and $B'$ in $D_1$. Therefore, $B \cap D_1^c \neq \emptyset$ and $B' \cap D_1^c \neq \emptyset$.
\\Now we choose two paths
$$ \xi: [0,1] \to B$$
such that $\xi(0)=q$, $\xi(1) \notin D_1$, $ \xi(t) \notin \mathbb{R}$ and
$$ \xi': [0,1] \to B'$$
such that $\xi'(0)=q'$, $\xi'(1) \notin D_1$, $ \xi'(t) \notin \mathbb{R}$. Let us define
$$ P= \{z \in \mathbb{C}: \, \exists \,  t \in [0,1], \, z= \xi(t) \, \, \hbox{or} \, \, \overline{\xi(t)}\},$$
$$ Q= \{z \in \mathbb{C}: \, \exists \, t \in [0,1], \, z= \xi'(t) \, \, \hbox{or} \, \, \overline{\xi'(t)}\}.$$
We observe that $P \cap \mathbb{R}= \{ q\}$, $Q \cap \mathbb{R}= \{ q'\}$ and $P \cap Q= \emptyset$. Now, we consider the following diagram of inclusion maps
$$
\begin{tikzcd}
D \arrow[r, ""] \arrow[d, ""']
& D_1 \ \arrow[d, ""]\\
(\mathbb{C} \setminus P)\setminus Q \arrow[r, ""] & \bigl(\mathbb{C} \setminus (P \cap D_1^c)\bigl)  \setminus (Q \cap D_1^c)
\end{tikzcd}
$$
By Lemma \ref{res5b} we get the injectivity of the following map
\begin{equation}
\label{map}
H_{3}(\Omega_{[(\mathbb{C} \setminus P)\setminus Q]}^4 ) \to H_{3}(\Omega_{ \{[\mathbb{C} \setminus (P \cap D_1^c)]\setminus (Q \cap D_1^c)\} }^4).
\end{equation}
In this case
$$ D_{\mathbb{R}}:=[(\mathbb{C} \setminus P)\setminus Q] \cap \mathbb{R}= (\mathbb{R} \setminus P)\setminus Q.$$
By construction the couple $( \alpha_0, \alpha_0)$ is mapped to a non-zero element of $ \widehat{H}_{0}((\mathbb{R} \setminus P)\setminus Q) \oplus \widehat{H}_{0}((\mathbb{R} \setminus P)\setminus Q)$. By the sequence \eqref{sec2b} follows that $ \alpha$ is mapped to a non-zero element of $H_{3}(\Omega_{[(\mathbb{C} \setminus P)\setminus Q]}^4 )$. By assumption the image of $ \alpha$ is zero in $H_3(\Omega_{D_1}^4)$. Since $D_1 \subset   \bigl(\mathbb{C} \setminus (P \cap D_1^c)\bigl)  \setminus (Q \cap D_1^c)$, we have that its image in $\Omega^4_{ \{[\mathbb{C} \setminus (P \cap D_1^c)]\setminus (Q \cap D_1^c)\}}$ is zero, too. Finally, due to the injectivity of the map \eqref{map} we have an absurd.
\end{proof}
\begin{nb}
In the picture Figure 1 it is not possible to build a $1$-cycle, instead of two 1-cycles. The unique chance could be $R_2=R_1'$, but in general we are not sure if between $R_2$ and $R'_1$ there are any holes which intersect the real line.
\end{nb}
Finally, we can prove the implication $ 3) \Longleftrightarrow 4)$ of Theorem \ref{main}.
\begin{prop}
\label{res7}
Let $D \subset D_1$ be a symmetric open subset of $ \mathbb{C}$ with the corresponding axially symmetric subsets $ \Omega_D^4 \subset \Omega^4_{D_1}$ in $ \mathcal{Q}_{\mathbb{R}_3}$. Then $H_1(D) \to H_1(D_1)$ is injective if and only if $H_{1}(\Omega_D^4) \to H_1(\Omega_{D_1}^4)$, $H_{3}(\Omega_D^4) \to H_3(\Omega_{D_1}^4)$ and $H_{5}(\Omega_D^4) \to H_5(\Omega_{D_1}^4)$ are injective simultaneously.
\end{prop}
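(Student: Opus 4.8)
The plan is to prove the two implications separately, assembling the auxiliary results established earlier in this section.

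\emph{Forward direction ($3)\Rightarrow 4)$).} Assume that $H_1(D)\to H_1(D_1)$ is injective. First I would treat $k=1$: by \eqref{sec3b} of Corollary \ref{result1} (equivalently \eqref{sec3} of Proposition \ref{desc} in the connected case) one has isomorphisms $H_1(\Omega_D^4)\simeq H_1(D^+)$ and $H_1(\Omega_{D_1}^4)\simeq H_1(D_1^+)$ which are compatible with the maps induced by the inclusions, exactly as used in the proof of Corollary \ref{res41}; hence it suffices to know that $H_1(D^+)\to H_1(D_1^+)$ is injective, and this is precisely Corollary \ref{res3}. The case $k=5$ is the content of Proposition \ref{res8}. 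For $k=3$, observe that injectivity of $H_1(D)\to H_1(D_1)$ trivially gives injectivity of $H_1(D)\oplus H_1(D)\to H_1(D_1)\oplus H_1(D_1)$, so Proposition \ref{res8b} yields injectivity of $H_3(\Omega_D^4)\to H_3(\Omega_{D_1}^4)$. Together these give $4)$.

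\emph{Converse direction ($4)\Rightarrow 3)$).} Assume $H_k(\Omega_D^4)\to H_k(\Omega_{D_1}^4)$ is injective for all $k\in\{1,3,5\}$. In particular the maps for $k=1$ and $k=5$ are injective, so Corollary \ref{res4} immediately gives that $H_1(D)\to H_1(D_1)$ is injective. (Alternatively one could invoke Corollary \ref{res41} using the $k=1$ and $k=3$ maps; either pair is enough, so the converse does not actually require all three.) This completes the equivalence.

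\emph{Where the difficulty lies.} The present proposition is essentially a bookkeeping step: the real content has been pushed into Propositions \ref{res8} and \ref{res8b} for the forward direction and into Corollaries \ref{res4} and \ref{res41} for the converse, all of which rest on the natural exact sequences of Propositions \ref{res21} and \ref{res2b}, the snake lemma, and the explicit geometric constructions (filling a $1$-cycle of $D^+$ by an $\mathbb{S}_{\mathbb{R}_3}$- or $\mathbb{S}^2$-family, and the path arguments of Lemmas \ref{res5} and \ref{res5b}). Accordingly, the only point that needs care here is to check that the hypotheses of each cited statement are genuinely met — in particular reducing to the connected case of $\Omega_D^4$ where Proposition \ref{desc} is invoked, and verifying that the various identifications $H_1(\Omega^4_\bullet)\simeq H_1(\bullet^+)$ commute with the inclusion-induced homomorphisms — and I do not expect any new obstacle beyond that.
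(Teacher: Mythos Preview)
Your proposal is correct and follows essentially the same route as the paper's own proof: for the forward direction you invoke Corollary~\ref{res3} together with the identification $H_1(\Omega_\bullet^4)\simeq H_1(\bullet^+)$ for $k=1$, Proposition~\ref{res8} for $k=5$, and Proposition~\ref{res8b} for $k=3$; for the converse you use Corollary~\ref{res4} (the paper cites both Corollary~\ref{res4} and Corollary~\ref{res41}, but as you correctly observe either one suffices). Your remark about reducing to connected $\Omega_D^4$ matches the paper's opening WLOG assumption.
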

\begin{proof}
We can assume, without loss of generality, that $ \Omega_D$ is connected.
\\ If $H_{1}(\Omega_D^4) \to H_1(\Omega_{D_1}^4)$, $H_{3}(\Omega_D^4) \to H_3(\Omega_{D_1}^4)$ and $H_{5}(\Omega_D^4) \to H_5(\Omega_{D_1}^4)$ are injective, by Corollary \ref{res4} and Corollary \ref{res41} we have that $H_1(D) \to H_1(D_1)$ is injective.
\\ We assume that $H_1(D) \to H_1(D_1)$ is injective, in particular we have that $H_1(D) \oplus H_1(D) \to H_1(D_1) \oplus H_1(D_1)$ is injective. By Proposition \ref{res8} and Proposition \ref{res8b} we have that both $H_{3}(\Omega_D^4) \to H_3(\Omega_{D_1}^4)$ and $H_{5}(\Omega_D^4) \to H_5(\Omega_{D_1}^4)$ are injective. Due to Proposition \ref{desc} we have that $H_1(\Omega_D^4) \simeq H_1(D^+)$ and $H_1(\Omega_{D_1}^4) \simeq H_1(D^+_1)$, thus by Corollary \ref{res3}  we have that $H_{1}(\Omega_D^4) \to H_1(\Omega_{D_1}^4)$ is injective.
\end{proof}

\textbf{Acknowledgements}
\newline
We warmly thank Prof. Irene Sabadini for fruitful discussions.

\hspace{2mm}

\noindent
Cinzia Bisi,
Dipartimento di Matematica e Informatica \\Universit\`a di Ferrara\\
Via Ma\-chia\-vel\-li n.~30\\
I-44121 Ferrara\\
Italy

\noindent
\emph{email address}: bsicnz@unife.it\\
\emph{ORCID iD}: 0000-0002-4973-1053

\vspace*{5mm}
\noindent
Antonino De Martino,
Dipartimento di Matematica \\ Politecnico di Milano\\
Via Bonardi n.~9\\
20133 Milan\\
Italy

\noindent
\emph{email address}: antonino.demartino@polimi.it\\
\emph{ORCID iD}: 0000-0002-8939-4389

\vspace*{5mm}
\noindent
J\"org Winkelmann, Lehrstuhl Analysis II,
Fakult\"at f\"ur Mathematik \\ Ruhr-Universit\"at Bochum\\
44780 Bochum\\
Germany

\noindent
\emph{email address}: joerg.winkelmann@rub.de\\
\emph{ORCID iD}: 0000-0002-1781-5842

\end{document}